\documentclass[a4paper,10pt]{amsart}
\usepackage{amsmath,amsthm,amssymb,tikz,calc,graphicx}
\usepackage[latin1]{inputenc}
\usepackage[T1]{fontenc}
\usepackage[all]{xy}
\usepackage{mathrsfs,pifont}

\textwidth=6.5in
\topmargin=0.2in
\oddsidemargin=-0.00in
\evensidemargin=-0.00in
\textheight = 8.9in

\numberwithin{equation}{section}
\newtheorem{thm}[equation]{Theorem}
\newtheorem*{thm*}{Theorem}
\newtheorem{cor}[equation]{Corollary}
\newtheorem{lem}[equation]{Lemma}
\newtheorem{prop}[equation]{Proposition}

\newtheorem*{prop*}{Proposition}

\newtheorem*{cor*}{Corollary}
\newtheorem{warnrem}[equation]{Remark/Warning}
\theoremstyle{definition}

\newtheorem{rem}[equation]{Remark}
\newtheorem{defn}[equation]{Definition}


\makeatletter
\DeclareRobustCommand\smallop[2][1]{%
	\mathop{\vphantom{\oplus}\mathpalette\smallop@{{#1}{#2}}}\slimits@
}
\newcommand{\smallop@}[2]{\smallop@@#1#2}
\newcommand{\smallop@@}[3]{%
	\vcenter{%
		\sbox\z@{$#1\oplus$}%
		\hbox{\resizebox{\ifx#1\displaystyle#2\fi\dimexpr\ht\z@+\dp\z@}{!}{$\m@th#3$}}%
	}%
}
\makeatother

\makeatletter
\DeclareRobustCommand\bigop[2][1]{%
	\mathop{\vphantom{\bigoplus}\mathpalette\bigop@{{#1}{#2}}}\slimits@
}
\newcommand{\bigop@}[2]{\bigop@@#1#2}
\newcommand{\bigop@@}[3]{%
	\vcenter{%
		\sbox\z@{$#1\bigoplus$}%
		\hbox{\resizebox{\ifx#1\displaystyle#2\fi\dimexpr\ht\z@+\dp\z@}{!}{$\m@th#3$}}%
	}%
}
\makeatother

\newcommand{\bigtopdirsum}{%
	\tikz[baseline]{\draw[thick] 
		(-.025,-1ex) -- ++(0,3.2ex) 
		(.025,-1ex) -- ++(0,3.2ex) 
		(0,-.025)++(-1.6ex,.6ex) -- ++(3.2ex,0) 
		(0,.025)++(-1.6ex,.6ex) -- ++(3.2ex,0) 
		(0,.6ex) circle (1.6ex) }%
}

\newcommand{\bigtoplus}{\DOTSB\bigop[.86]{\bigtopdirsum}}




\def\R{\mathbb R}
\def\N{\mathbb N}
\def\Z{\mathbb Z}
\def\A{\mathbb A}
\def\Q{\mathbb Q}
\def\C{\mathbb C}

\def\P{\{P\}}
\def\ira{\stackrel{\sim}{\longrightarrow}}
\def\hra{\hookrightarrow}
\def\ra{\rightarrow}
\def\U{\mathcal U}
\def\g{\mathfrak g}

\def\n{\mathfrak n}
\def\a{\mathfrak a}

\def\l{\mathfrak l}

\def\p{\mathfrak p}

\def\frakh{\mathfrak h}
\def\<{\langle}
\def\>{\rangle}
\def\J{\mathcal J}
\def\Hom{{\rm Hom}}

\def\vv{{\sf v}}
\def\ira{\stackrel{\sim}{\longrightarrow}}
\def\ira{\stackrel{\sim}{\longrightarrow}}
\def\id{\mathrm{id}}

\providecommand{\absl}[1]{\left\lvert#1\right\rvert}
\providecommand{\norm}[1]{\left\lVert#1\right\rVert}

\newcommand{\calZ}{{\mathcal{Z}}}

\newcommand{\GL}{{\mathrm{GL}}}
\newcommand{\spacedcdot}{\,\cdot\,}
\providecommand{\norm}[1]{\left\lVert#1\right\rVert}
\providecommand{\ab}[1]{\left|#1\right|}
\newcommand{\calA}{{\mathcal A}}
\newcommand{\calP}{{\mathcal P}}
\newcommand{\calH}{{\mathcal H}}

\DeclareMathOperator{\Cl}{Cl} 
\DeclareMathOperator{\Lie}{Lie} 
\DeclareMathOperator{\vol}{vol}

\newcommand{\bigcprojtp}{\mathop{\overline{\bigotimes}_{\sf pr}}\limits}

\newcommand{\cprojtp}{\mathop{\overline{\otimes}_{\sf pr}}\limits}

\newcommand{\cindtp}{\mathbin{\overline{\otimes}_{\sf in}}}


\newcommand{\rtprod}{\mathop{%
    \mathchoice%
        {\sideset{}{'}\bigotimes}%
        {\bigotimes'}%
        {\bigotimes'}%
        {\bigotimes'}%
    }%
}

\title[]{On the notion of the parabolic and the cuspidal support of smooth-automorphic forms and smooth-automorphic representations}
\author{Harald Grobner \& Sonja \v Zunar}

\address{Harald Grobner: Fakult\"at f\"ur Mathematik\\ University of Vienna\\ Oskar-Morgenstern-Platz 1\\ A-1090 Wien\\Austria}
\email{harald.grobner@univie.ac.at}
\urladdr{https://homepage.univie.ac.at/harald.grobner}

\address{Sonja \v Zunar: Faculty of Science, Department of Mathematics\\ University of Zagreb\\ Bijeni\v cka cesta 30\\ 10000 Zagreb\\ Croatia}
\email{szunar@math.hr}
\urladdr{https://web.math.pmf.unizg.hr/~szunar/}

\keywords{Automorphic form, Automorphic representation, Smooth-automorphic representation, Cuspidal, Parabolic support, Cuspidal support}
\subjclass[2010]{Primary: 11F75; Secondary: 11F70, 11F55, 22E55}
\thanks{The first-named author is supported by the Austrian Science Fund (FWF) START-prize Y966 and also by the FWF Stand-alone research project P32333. The second-named author is supported by the Croatian Science Foundation grant IP-2018-01-3628.}

\usepackage{xcolor}

\usepackage[normalem]{ulem}

\begin{document}
\maketitle

\begin{abstract}
In this paper we describe several new aspects of the foundations of the representation theory of the space of smooth-automorphic forms (i.e., not necessarily $K_\infty$-finite automorphic forms) for general connected reductive groups over number fields. Our role model for this space of smooth-automorphic forms is a ``smooth version'' of the space of automorphic forms, whose internal structure was the topic of Franke's famous paper \cite{franke}.  We prove that the important decomposition along the parabolic support, and the even finer -- and structurally more important -- decomposition along the cuspidal support of automorphic forms transfer in a topologized version to the larger setting of smooth-automorphic forms. In this way, we establish smooth-automorphic versions of the main results of \cite{franke_schwermer} and of \cite{moewal}, III.2.6.  
\end{abstract}

\setcounter{tocdepth}{1}
\tableofcontents
\section*{Introduction}
\subsection*{Context}


It has been almost 30 years since J. Franke wrote his epochal paper \cite{franke} on the space of automorphic forms $\calA_\J([G])$ attached to a connected reductive group $G$ over a number field $F$ (and a fixed, but arbitrary ideal $\J$ of finite codimension). We investigate a ``smooth version'' of Franke's space of automorphic forms, to be denoted $\calA^\infty_\J([G])$ and to be called {\it smooth-automorphic forms}. This is to be understood as a certain topological completion of the original space $\calA_\J([G])$ -- by getting rid of the condition of $K_\infty$-finiteness of the elements of $\calA_\J([G])$ -- which provides us the benefit of now having the whole group $G(\A)$ act continuously by right translation and thus regaining the representation-theoretical symmetry between the archimedean and non-archimedean places of $F$. 

This idea of replacing classical $K_\infty$-finite automorphic forms by a ``smooth'' version of them (in the above sense) is not new, is well-known to the experts, and goes back to early ideas of Bernstein, Casselman, Wallach et al., which are in turn closely related to what one calls the Casselman-Wallach completion of Harish-Chandra modules\footnote{To quote from \cite{casselman_schwartz}, Rem.\ 3.6, which refers to the then newly developed theory of Casselman-Wallach completions: {\it This result suggests that one plausible, and perhaps useful, extension of the notion of automorphic form would be to include functions in $A_{umg}(\Gamma\backslash G)$ which are $Z(\g)$-finite but not necessarily $K$-finite.}}. 

More recently, smooth-automorphic forms gained significant importance by supplying the context in which the global Gan--Gross--Prasad conjectures were formulated, \cite{ggp}, \S 22, p.\ 80 and, even more importantly, by the indispensable role they played in the very recent proof of these conjectures for (endoscopic) unitary groups by Beuzart-Plessis--Chaudouard--Zydor, cf.\ \cite{BPCZ20}, in particular \S 2.7 therein. 

Still, only rather basic results are known about the representation theory of smooth-automorphic forms.

\subsection*{Results} In this paper we extend the theory of smooth-automorphic forms of a general connected reductive group $G$ over a number field by establishing two of the most important internal structural descriptions of the representation theory of Franke's space $\calA_\J([G])$ in the smooth-automorphic framework. More precisely, we show that the decomposition along the parabolic and the (more refined and hence structurally deeper) cuspidal support transfer in a topologized version to the limit-Fr\'echet (LF) space of smooth-automorphic forms $\calA^\infty_\J([G])$. Our main result can be summarized as follows:

\begin{thm*}
Let $\calA^\infty_\J([G])$ be the LF-space of smooth-automorphic forms $\varphi: G(F)A_G^\R\backslash G(\A)\ra\C$, which are annihilated by a power of $\J$. Then, we have the (bi-continuous) decompositions of $G(\A)$-representations as locally convex direct sums
$$\calA_\J^\infty([G])=\bigoplus_{\left\{P\right\}}\calA^\infty_{\J,\P}([G]) = \bigoplus_{\left\{P\right\}} \bigoplus_{\pi}\calA^\infty_{\J,\P,\pi}([G]) $$
along the parabolic and the cuspidal smooth-automorphic support. Here, $\calA^\infty_{\J,\P}([G])$ denotes the $G(\A)$-subrepresentation of smooth-automorphic forms, which are negligible along every parabolic subgroup, which is not in the associate class $\{P\}$ of the parabolic subgroup $P$; and $\calA^\infty_{\J,\P,\pi}([G])$ denotes the $G(\A)$-subrepresentation of smooth-automorphic forms, which are limits of derivatives of regularized values of Eisenstein series attached to the associate class of the cuspidal smooth-automorphic representation $\pi$ of $L_P(\A)$. 
\end{thm*}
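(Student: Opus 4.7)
The plan is to transfer the $K_\infty$-finite parabolic and cuspidal support decompositions of \cite{franke_schwermer} and of \cite{moewal}, III.2.6, to the smooth-automorphic setting, exploiting that $\calA^\infty_\J([G])$ arises as a topological completion of $\calA_\J([G])$ in which the $K_\infty$-finite automorphic forms form a $G(\A_f)$-stable dense subspace.

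For the parabolic support decomposition, I would first define $\calA^\infty_{\J,\{P\}}([G])$ intrinsically as the $G(\A)$-invariant subspace of smooth-automorphic forms that are negligible along every standard parabolic subgroup outside the associate class $\{P\}$. Each such subspace is closed, since negligibility along a fixed parabolic is a closed condition in the LF-topology. The algebraic sum is direct by incompatibility of parabolic supports, and is dense because already every $K_\infty$-finite form decomposes as a finite sum of parabolic-support components by \cite{franke_schwermer}. Since there are only finitely many associate classes $\{P\}$, a finite locally convex direct sum is automatically bi-continuous once the component projectors are continuous; I would obtain continuity of these projectors by extending, via density and standard seminorm estimates, the classical pseudo-Eisenstein/residue projectors from $\calA_\J([G])$ to its smooth completion.

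For the finer cuspidal decomposition inside $\calA^\infty_{\J,\{P\}}([G])$, define $\calA^\infty_{\J,\{P\},\pi}([G])$ as the closed $G(\A)$-subrepresentation generated by the smooth limits of derivatives of regularized values of Eisenstein series attached to $\pi$, and apply the $K_\infty$-finite decomposition of \cite{moewal}, III.2.6, to the dense subspace. The index set here is countable rather than finite, so one needs each smooth-automorphic form to decompose as a locally-convex-convergent sum of cuspidal support components; here the assumption that $\varphi$ is annihilated by a power of the finite-codimension ideal $\J$ is crucial, since it bounds the possible infinitesimal characters and thus restricts the relevant cuspidal data $\pi$ to a controlled set for any given form.

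The main obstacle will be the continuity of the cuspidal projectors in the LF-topology of $\calA^\infty_\J([G])$: in a strict inductive limit of Fr\'echet spaces, closedness of each summand and density of their algebraic sum are not enough to guarantee a locally convex direct sum decomposition. I would overcome this by combining automatic continuity theorems for morphisms of moderate-growth smooth representations (in the spirit of Casselman--Wallach) with the Fr\'echet-step structure of $\calA^\infty_\J([G])$, so as to reduce the problem, for each individual form, to an effectively finite direct sum whose bi-continuity can be inherited as in the parabolic case. Once both projection systems are shown to be continuous, closedness of the summands and the bi-continuous direct sum property follow formally from standard locally convex space theory.
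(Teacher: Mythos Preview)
Your overall strategy---transfer the $K_\infty$-finite decompositions via density and some form of Casselman--Wallach automatic continuity---is the right one and matches the paper's approach in spirit. But the organization and one specific step diverge in a way that creates a gap.

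For the parabolic decomposition you propose to ``obtain continuity of these projectors by extending, via density and standard seminorm estimates, the classical pseudo-Eisenstein/residue projectors.'' There are no such standard estimates: direct seminorm control of the parabolic-support projectors on $\calA_\J([G])$ in the LF-topology is exactly the hard analytic point, and it is not clear how to do it without already invoking Casselman--Wallach. The paper avoids this entirely. It proves a single general proposition (Prop.~\ref{prop:108}): if $V$ is an LF-compatible smooth-automorphic subrepresentation and $V_{(K_\infty)}=\bigoplus_i V_{0,i}$ as $(\g_\infty,K_\infty,G(\A_f))$-modules, then $V=\bigtoplus_i \Cl(V_{0,i})$ as a locally convex direct sum. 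The proof works level by level on the Fr\'echet steps $V^{K_n,\J^n}$: each such step is a Casselman--Wallach representation of $G_\infty$ (admissible and $\calZ(\g)$-finite, hence finitely generated), so by Casselman--Wallach uniqueness the identity on $K_\infty$-finite vectors extends to a topological isomorphism between $V^{K_n,\J^n}$ and the finite direct sum $\bigoplus_i \overline{V_{0,i}^{K_n,\J^n}}$. Passing to the inductive limit then gives the locally convex direct sum. This single proposition handles \emph{both} the parabolic and the cuspidal decompositions uniformly---you apply it once to $\calA_\J([G])=\bigoplus_{\{P\}}\calA_{\J,\{P\}}([G])$ and once to $\calA_{\J,\{P\}}([G])=\bigoplus_\varphi\calA_{\J,\{P\},\varphi}([G])$---and the finiteness needed at each Fr\'echet step comes automatically from finite generation, without having to argue separately that $\J$ bounds the cuspidal data.

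There is a second point your outline misses. After Prop.~\ref{prop:108} gives $\calA^\infty_\J([G])=\bigtoplus_{\{P\}}\Cl(\calA_{\J,\{P\}}([G]))$, one still must identify $\Cl(\calA_{\J,\{P\}}([G]))$ with the intrinsically defined $\calA^\infty_{\J,\{P\}}([G])$. The paper does this by invoking Langlands's \emph{algebraic} decomposition of the full smooth space $C^\infty_{umg}(G(F)\backslash G(\A))=\bigoplus_{\{P\}}C^\infty_{umg,\{P\}}(G(F)\backslash G(\A))$ (from \cite{bls} or \cite{casselman_schwartz}), which immediately shows that the $\calA^\infty_{\J,\{P\}}([G])$ already span $\calA^\infty_\J([G])$ as a vector space, not merely a dense subspace; combined with closedness of $\calA^\infty_{\J,\{P\}}([G])$ (which you correctly note), this forces the desired equality. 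Your density-only claim would make this identification more delicate.
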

As applied to the case $P=G$, the space $\calA^\infty_{\J,\{G\}}([G])=\calA^\infty_{cusp,\J}([G])$ of cuspidal smooth-automorphic forms decomposes as a countable locally convex direct sum over the LF-spaces $\calH^{\infty_\A}$ of globally smooth vectors in the irreducible Hilbert space summands $\calH$ of the unitary representation on the Hilbert-space $L^2_{cusp,\J}(G(F)A_G^\R\backslash G(\A))$. We refer to the body of the paper for the precise definitions of all terms and to Thm.\ \ref{thm:112}, \ref{thm:cusp} and \ref{thm:cuspsup} for a proof.

Aside our main result, one driving source of motivation to write this paper was to address and resolve several misunderstandings about the space of smooth-automorphic forms, that can be found in the (rather sparse) literature on the subject. A basic but at the same time crucial such misunderstanding concerns the question of how to topologize the space of smooth-automorphic forms $\calA^\infty_\J(G)$ (i.e., how to make it into a Hausdorff locally convex topological vector space (LCTVS)), so that the right regular action would naturally make it a representation of $G(\A)$. Some authors do not address this point at all, and unfortunately we had to learn that several other authors even use incorrect constructions, leading to incorrect proofs. This convinced us that one should also provide a concise, short treatment of the basics of the theory, i.e., provide a source where they are settled carefully. 

Just to give the reader two hints of why this does not amount to a simple exercise in carefulness: (1): Consider the space $C^\infty_{umg,d}(G(F)\backslash G(\A))$ of smooth, left-$G(F)$-invariant functions $f: G(\A)\ra\C$, which satisfy the growth condition $p_{d,X}(f):=\sup_{g\in G(\A)}\ab{(Xf)(g)}\,\norm g^{-d} <\infty$, for a fixed $d\in\N$ and varying $X\in\U(\g)$. We could name at least three references in the literature, in which it is claimed that the space $C^\infty_{umg,d}(G(F)\backslash G(\A))$, its subspace of $\mathcal Z(\g)$-finite functions $C^\infty_{umg,d}(G(F)\backslash G(\A))_{(\calZ(\g))}$ or its subspace of $\mathcal Z(\g)$-finite and right-$K_n$-invariant functions $C^\infty_{umg,d}(G(F)\backslash G(\A))^{K_n}_{(\calZ(\g))}$ becomes a Fr\'echet space, when equipped with the seminorms $p_{d,X}$. However, none of these spaces is complete in the respective topology, and so all the concepts to be found in the literature, which are based on such claims must fail from the very beginning. Instead, here we show that the space of smooth-automorphic forms $\calA^\infty_\J(G)$ can be given a natural LF-space topology, exploiting the non-trivial fact that the growth of all functions $\varphi\in\calA^\infty_\J(G)$ can be uniformly bounded by the same exponent of growth. This result, see our Prop.\ \ref{prop:same_d}, seems to be largely neglected in the relevant literature but is a necessary key to make all topological constructions work. (2): Another frequently occurring misconception concerns the role of the topology on irreducible smooth-automorphic representations $V\subset  \calA^\infty_\J(G)$: It is unfortunately often neglected that it does not follow automatically that $V$, when equipped with the subspace topology from $\calA^\infty_\J(G)$, inherits the structure of an LF-space. In fact, it was one of the striking early discoveries of Groethendieck that a closed subspace of an LF-space does not need to be an LF-space itself. However, the existence of an LF-space structure on $V$ is indispensable, in order to obtain a (topologized) version of the restricted tensor product theorem in the smooth-automorphic context, i.e., the very key to a local-global principle for irreducible smooth-automorphic representations. Here, we solve both problems, providing a certain global analogue of a famous result of Casselman and Wallach, see Prop.\ \ref{prop:smmf} and Thm.\ \ref{thm:TPthm}.


We express our hope that this paper will become a useful reference for anyone, who seeks a clear and concise treatment of the internal, representation-theoretical structure of the space of smooth-automorphic forms in complete generality.
\small
\subsubsection*{Acknowledgments:} We are grateful to Rapha\"el Beuzart-Plessis and Binyong Sun for very useful conversations and to Erez Lapid for hinting us to his paper \cite{BL}. 
\normalsize

\section{Notation and basic assumptions}
\subsection{Number fields}
We let $F$ be an algebraic number field. Its set of (non-trivial) places is denoted $S$ and we will write $S_\infty$ for the set of archimedean places. The ring of adeles of $F$ is denoted $\A_F$ or simply by $\A$, the subring of finite adeles is denoted $\A_f$. If $\vv \in S$, $F_\vv $ stands for the topological completion of $F$ with respect to the normalized absolute value, denoted $|\cdot|_\vv $, of $F$. We write $\|\cdot\|_\A:=\prod_{\vv \in S} |\cdot|_\vv $ for the adelic norm.

\subsection{Algebraic groups}
In this paper, $G$ is a connected, reductive linear algebraic group over $F$. 

We assume to have fixed a minimal parabolic $F$-subgroup $P_0$ with Levi decomposition $P_0=L_0N_0$ over $ F $ and let $A_0$ be the maximal $F$-split torus in the center $Z_{L_0}$ of $L_0$. This choice defines the set $ \calP $ of standard parabolic $F$-subgroups $P$ with Levi decomposition $P=L_PN_P$, where $L_P\supseteq L_0$ and $N_P\subseteq N_0$. We let $A_P$ be the maximal $F$-split torus in the center $Z_{L_P}$ of $L_P$, satisfying $A_P\subseteq A_0$. If it is clear from the context, we will also drop the subscript ``$P$''. 

We put $\check\a_P:=X^*(A_P)\otimes_\Z\R$ and $\a_P:=X_*(A_P)\otimes_\Z\R$, where $X^*$ (resp. $X_*$) denotes the group of $F$-rational characters (resp. co-characters). These real Lie algebras are in natural duality to each other. We denote by $\<\cdot,\cdot\>$ the pairing between $\check\a_P$ and $\a_P$ and we let $(\cdot,\cdot)$ be the standard euclidean inner product on $\check\a_{P}\cong\R^{\dim\a_P}$. The inclusion $A_P\subseteq A_0$ (resp. the restriction of characters to $P_0$) defines $\a_P\ra\a_0$ (resp. $\check\a_P\ra\check\a_0$), which gives rise to direct sum decompositions $\a_0=\a_P\oplus\a^P_0$ and $\check\a_0=\check\a_P\oplus\check\a^P_0$. We let $\a^Q_P:=\a_P\cap\a_0^Q$ and $\check\a^Q_P:=\check\a_P\cap\check\a_0^Q$ for parabolic $F$-subgroups $Q$ and $P$. Furthermore, we set $\check\a_{P,\C}:=\check\a_P\otimes_\R\C$ and $\a_{P,\C}:=\a_P\otimes_\R\C$. Then the analogous assertions hold for these complex Lie algebras. 

The group $P$ acts on $N_P$ by the adjoint representation. The weights of this action with respect to the torus $A_P$ are denoted $\Delta(P,A_P)$ and $\rho_P$ denotes the half-sum of these weights, counted with multiplicity. We will not distinguish between $\rho_P$ and its derivative, so we may also view $\rho_P$ as an element of $\a_P$. In particular, $\Delta(P_0,A_0)$ defines a choice of positive $F$-roots of $G$. With respect to this choice, we shall use the notation $\check\a_P^{G+}$ for the open positive Weyl chambers in $\check\a^G_P$.

For $P=LN\in \calP$, let us write $W_L$ for the Weyl group of $L$ with respect to $A_0$, i.e., for the Weyl group attached to the (potentially non-reduced) root system given by the set of all $\pm\alpha$, where $\alpha\in\Delta(P_0,A_0)$ is a positive $F$-root of $G$, which is not in $\Delta(P,A_P)$. Following \cite{moewal}, II.1.7, we will write $W(L)$ for the set of representatives of $W_L\backslash W_G$ of minimal length, for which $wLw^{-1}$ is again the Levi subgroup of a standard parabolic $F$-subgroup of $G$.

\subsection{Locally compact groups}

\subsubsection{Generalities}\label{sect:liegrps} 
We put $G_\infty:=R_{F/\Q}(G)(\R)$, where $R_{F/\Q}$ denotes the restriction of scalars from $F$ to $\Q$. We shall also write $G_\vv :=G(F_\vv )$, $\vv \in S$, whence $G_\infty=\prod_{\vv \in S_\infty} G_\vv $. The analogous notation is used for groups different from $G$. Lie algebras are denoted by the same but lower case gothic letter, e.g., $\g_\infty=\Lie(G_\infty)$ or $\a_{P,\vv }=\Lie(A_{P,\vv })$. Moreover, we write $ \g=\g_\infty $. For every real Lie algebra $ \frakh $, we denote by $\mathcal Z(\frakh)$ the center of the universal enveloping algebra $\U(\frakh)$ of the complex Lie algebra $\frakh_{\C}=\frakh\otimes_\R\C$. In this paper, $\J$ will always stand for an ideal of finite codimension in $\calZ(\g)$. \\\\
We fix a maximal compact subgroup $K_\A$ of $G(\A)$, which is in good position with respect to our choice of standard parabolic subgroups, cf.\ \cite{moewal} I.1.4. It is of the form $K_\A=K_\infty \times K_{\A_f}$, where $K_\infty=\prod_{\vv \in S_\infty} K_\vv $ is a maximal compact subgroup of $G_\infty$ and $K_{\A_f}=\prod_{\vv \notin S_\infty} K_\vv $ is a maximal compact subgroup of $G(\A_f)$, which is hyperspecial at almost all places. For each $\vv \in S$, we choose the Haar measure $ dg_\vv $ on $ G_\vv v $ with respect to which $ \vol(K_\vv )=1 $. The product measures $dg_\infty:=\prod_{\vv \in S_\infty} dg_\vv $, $dg_f:=\prod_{\vv \notin S_\infty} dg_\vv $, and $ dg:=dg_\infty \cdot dg_f $ are then Haar measures on the respective group. Once and for all, we will fix a cofinal sequence $\{K_n\}_{n\in\N}$ (subject to the conditions $K_n\supset K_{n+1}$ and $\bigcap_n K_n = \{\id\}$), forming a neighbourhood base of $\id\in G(\A_f)$ of open compact subgroups $K_n=\prod_{\vv \notin S_\infty} K_{n,\vv }$ of $K_{\A_f}$. \\\\
We denote by $H_P:L_P(\A)\ra\a_{P,\C}$ the standard Harish-Chandra height function, cf. \cite{moewal} I.1.4(4). The group $L_P(\A)^1:=\ker H_P=\bigcap_{\chi\in X^*(L_P)}\ker(\|\chi\|_\A)$ then admits a direct complement $A^\R_P\cong\R_+^{\dim\a_P}$ in $L_P(\A)$ whose Lie algebra is isomorphic to $\a_P$. With respect to $K_\A$, we obtain an extension $H_P:G(\A)\ra\a_{P,\C}$ by setting $H_P(g)=H_P(\ell)$ for $g=\ell n k$. Recall that $K_\A$ has trivial intersection with $A^\R_G$ (but may intersect non-trivially with $A_{G,\infty}$). The same is true for the image of $G(F)$ via the diagonal embedding into $G(\A)$. \\\\
The Lie algebra $\a_P$ of the connected Lie group $A^\R_P$ is viewed as being diagonally embedded into $\a_{P,\infty}$. Moreover, we will denote by $S(\a_{P,\C}):=\bigoplus_{n\in \N}{\rm Sym}^n(\a_{P,\C})$ the symmetric algebra attached to $\a_{P,\C}$, which we will think of as being identified with $\mathcal Z(\a_P)$ as well as with the algebra of polynomials on $\check\a_{P,\C}$, cf.\ \cite{moewal}, I.3.1. Analogously, $S(\check\a^G_{P,\C})$ will denote the symmetric algebra of $\check\a^G_{P,\C}$, which may be viewed as the space of differential operators with constant coefficients on $\check\a^G_{P,\C}$.
 
\subsubsection{Group norms}\label{sect:umg}
Once and for all we fix an embedding $ \iota_G:G\to\GL_N $ defined over $ F $ and define the adelic group norm $ \norm{\spacedcdot}=\norm{\spacedcdot}_{G(\A)}:G(\A)\to\R_{>0} $,
\[ \norm g:=\prod_{\vv \in S}\max_{1\leq i,j\leq N}\left\{\ab{\iota(g_\vv )_{i,j}}_\vv ,\ab{\iota(g_\vv ^{-1})_{i,j}}_\vv \right\}. \]
We note that $\norm g=\norm{g_\infty}\cdot\norm{g_f}$ for all $g=(g_\infty,g_f)\in G(\A)$ and that there exist $ c_0,C_0\in\R_{>0}  $ such that
\begin{equation}\label{eq:ungl}
 \norm g\geq c_0\qquad\text{and}\qquad\norm{gh}\leq C_0\norm g\norm h
 \end{equation}
for all $ g,h\in G(\A) $, cf.\ \cite{moewal}, I.2.2.

\subsection{Locally convex topological vector spaces, direct limits and LF-spaces}\label{sect:LF}
We will use the abbreviation LCTVS to refer to complex, Hausdorff, locally convex topological vector spaces. Moreover, we will use the notion of a strict inductive limit of an increasing sequence of LCTVSs as follows: Let $ (V_n)_{n\in\N} $ be a sequence of LCTVSs such that for each $ n $, $ V_n $ is a closed topological vector subspace of $ V_{n+1} $ (not necessarily a proper one). The strict inductive limit $\lim_{n\ra\infty} V_n$ of the sequence $ (V_n)_{n\in\N} $ is the space $ V:=\bigcup_{n=1}^\infty V_n $ equipped with the finest locally convex topology such that the inclusion maps $\iota_n: V_n\hookrightarrow V $ are continuous. Consequently, a linear map $\phi: V\ra V'$ into an LCTVS $V'$ is continuous if and only if its restriction to each $V_n$, $n\in\N$, is, and a basis of neighbourhoods of $0$ in $V$ is given by the family of subsets 
$$U:={\rm AConv}\left(\bigcup_{n\in\N} U_n\right),$$
where $U_n$ runs through a basis of neighbourhoods of $0$ in each $V_n$ and ``AConv'' denotes the absolute convex hull in $V$. The space $V$ induces on each step $V_n$ its original topology with which $V_n$ becomes a closed subspace of $V$. If each $ V_n $ is complete (resp., barrelled), then so is $V$. If each $ V_n $ is a Fr\'echet space, we say that $ V $ is an {\it LF-space} (``limit Fr\'echet'') with a defining sequence $ (V_n)_{n\in\N} $. In this case, $ V $ is complete, barrelled and bornological, but it is not Fr\'echet unless $ (V_n)_{n\in\N} $ becomes stationary (i.e., $V=V_n$ for some $n$), because it  cannot be Baire (the closed steps $V_n$ have empty interior, as otherwise they would be absorbing and hence all of $V$). We remark that, if each space $V_n$ is finite-dimensional, then $V$ carries its finest locally convex topology.  See \cite{grothendieck_topovs}, Chp.\ 4, Part 1, Prop.\ 1, Cor.\ 1, Prop.\ 2 and Prop.\ 3. \\\\
For a family $ \left\{W_n\right\}_{n\in\N} $ of LCTVSs, we denote by
$$\bigtopdirsum_{n\in\N} W_n$$
the locally convex direct sum of the spaces $W_n$, i.e., the strict inductive limit $V=\lim_{n\ra\infty} \bigoplus^n_{k=1}W_k$. Obviously, if each $W_n$ is Fr\'echet, then $V$ is LF. \\\\
As it is well-known, a closed subspace $W$ of an LF-space $V$ does not need to be an LF-space, cf.\ \cite{grothendieck_LF}, p.\ 89. 
 

\subsection{Representations}\label{sect:rep}
\subsubsection{A few necessary generalities}\label{sect:genreps}
A word on our notions concerning representations (which are the standard ones, to be found, for instance in\ \cite{bowa}), mainly put here to explain their interplay with LF-spaces and in order to fix our specific use of notation, which is tailored to fit the needs when treating representations of adelic groups. To this end, let {\sf G} be a second-countable, locally compact, Hausdorff topological group (e.g., {\sf G} $=G(\A)$, $G_\infty$, $G(\A_f)$ or $G(F_\vv )$). A representation of {\sf G} is a pair $(\pi,V)$, where $V$ is a {\it complete} LCTVS and $\pi: {\sf G} \ra {\rm Aut}_\C(V)$ a group homomorphism such that ${\sf G} \times V \ra V$, given by $({\sf g},v)\mapsto \pi({\sf g})v$, is continuous. If $V$ is barrelled, then the latter is equivalent to $({\sf g},v)\mapsto \pi({\sf g})v$ being separately continuous, i.e., that $\pi({\sf g})$ is a  continuous linear operator $V\ra V$ for all ${\sf g}\in {\sf G}$ and that for each $v\in V$ the orbit map
$$c_v:{\sf G} \ra V,\qquad {\sf g}\mapsto \pi({\sf g})v,$$
is continuous, cf.\ \cite{bourbaki}, VIII, \S 2, Sect.\ 1, Prop.\ 1. We let $\mathcal C_{{\sf G}}$ be the category of ${\sf G}$-representations with morphisms the ${\sf G}$-equivariant continuous linear maps, and {\it irreducibility} and {\it equivalence} of representations is henceforth meant within $\mathcal C_{{\sf G}}$. A representation $(\pi,V)\in \mathcal C_{{\sf G}}$ is {\it unitary}, if $V$ is a Hilbert space and $\pi({\sf g})$ preserves its Hermitian form for all ${\sf g}\in {\sf G}$. In particular, if {\sf G} is compact, then any irreducible representation is finite-dimensional, cf.\ \cite{johnson}, Thm.\ 3.9, and admits an Hermitian form, with respect to which it is unitary. We will use the notions of {\it smoothness} of representations $(\pi,V)\in\mathcal C_{\sf G}$ and vectors $v\in V$, as well as the one of {\it admissibility} of $(\pi,V)\in\mathcal C_{\sf G}$, as in \cite{bowa} 0, \S 2.3 \& \S 2.4, if ${\sf G}$ is a real Lie group with finite component group, resp.\  as in \cite{bowa} X, \S 1.3, if ${\sf G}$ is totally disconnected.  \\\\
It is worth noting that if ${\sf G}$ is totally disconnected, one obtains an equivalence of categories between the subcategory $\mathcal C^{\sf sm, adm}_{{\sf G}}$ of smooth admissible representations in $\mathcal C_{{\sf G}}$ and the category of abstract admissible ${\sf G}$-representations, see\ \cite{bowa} X, \S 5.1 (i.e., using the notion of admissible representation as it is normally done in $p$-adic representation theory, cf.\ \cite{casselman95}, where $V$ is given the discrete topology instead the one of an LCTVS): This equivalence is given by attaching to an abstract admissible ${\sf G}$-representation the LF-topology defined by writing it as the countable increasing union of its invariant subspaces under a suitable cofinal sequence of open compact subgroups, see \cite{bowa} X, \S 1.3 \& \S 5.1. As pointed out in \S \ref{sect:LF}, this LF-space topology is the finest locally convex topology. In particular, as any vector subspace of an LCTVS, which carries its finest locally convex topology, is closed, 
this equivalence respects irreducibles. 
\subsubsection{Smooth representations}
Remaining in the totally disconnected case, let $\{{\sf K}_n\}_{n\in\N}$ be a decreasing cofinal sequence of compact open subgroups of $ \sf G $, and given a representation $(\pi,V)$ of {\sf G}, let $ E^{{\sf K}_n}:V\to V $ be the standard continuous projection onto $ V^{{\sf K}_n} $:
\[ E^{{\sf K}_n}(v):=\frac1{\mathrm{vol}\,{\sf K}_n}\int_{{\sf K}_n}\pi({\sf k})v\,d{\sf k}, \]
for some fixed Haar measure $d{\sf k}$ on ${\sf K}_n$. We define closed subspaces $ V_n $, $ n\in\N $, of $ V $ as follows:
\begin{equation}\label{eq:013}
V_n:=\begin{cases}
V^{{\sf K}_1},&\text{if }n=1,\\
\ker\left(E^{{\sf K}_{n-1}}\big|_{V^{{\sf K}_n}}\right),&\text{if }n>1.
\end{cases}
\end{equation}
As the restriction $ E^{{\sf K}_n}\big|_{V^{{\sf K}_{n+1}}} $ is a continuous projection onto $ V^{{\sf K}_n} $, one gets
\begin{equation}\label{eq:tddec:0}
	V^{{\sf K}_n}=\bigtoplus_{i=1}^nV_i,\qquad n\in\N,
	\end{equation}
cf.\ \cite{bourTVS}, II, \S 4, Sect.\ 5, Prop.\ 6. We also record the following general lemma. 

\begin{lem}\label{lem:tddec}
	Let  $ \sf G $ be totally disconnected and let $ (\pi,V) $ be a smooth $ \sf G $-representation. Then, 
		\begin{equation}\label{eq:tddec:1}
	V=\bigtoplus_{n\in\N}V_n.
	\end{equation}
Consequently, every subrepresentation $U$ of a smooth $ \sf G $-representation $V$ is smooth. Moreover, if $ V/U $ is a $ \sf G $-representation, then $ V/U $ is a smooth $ \sf G $-representation. 
\end{lem}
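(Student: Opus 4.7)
The plan is to deduce \eqref{eq:tddec:1} from smoothness combined with the finite-level decomposition \eqref{eq:tddec:0}, and then to read off the statements about subrepresentations and quotients as quick consequences.

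For the direct sum, I first observe that smoothness of $V$ is equivalent to every $v\in V$ being fixed by some compact open subgroup, and since $(K_n)_{n\in\N}$ is a decreasing cofinal such sequence, this yields $V=\bigcup_{n\in\N}V^{K_n}$ as a vector space; each $V^{K_n}$ is closed in $V$ as the image of the continuous projection $E^{K_n}$, and the chain is ascending. Combining with \eqref{eq:tddec:0} gives the algebraic identification $V=\bigoplus_{n\in\N}V_n$ with closed summands. The topological upgrade to $V=\bigtoplus_{n\in\N}V_n$ then consists in recognizing the right-hand side as the strict inductive limit $\varinjlim_{n} V^{K_n}$ in the sense of \S\ref{sect:LF}; the rigidity furnished by the continuous idempotents $E^{K_n}$, and in particular by their differences $E^{K_n}-E^{K_{n-1}}$, which are precisely the continuous projections onto $V_n$, identifies this inductive limit topology with the ambient topology on $V$.

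For the corollary statements, both use smoothness of $V$ applied to individual vectors. Given a subrepresentation $U\subseteq V$ and $u\in U$, smoothness supplies $N$ with $u\in V^{K_N}$; since $u$ is $K_N$-fixed and lies in $U$, we conclude $u\in U^{K_N}$, so $U=\bigcup_n U^{K_n}$ is smooth. For a quotient $V/U$ that is a $\sf G$-representation, any $\bar v\in V/U$ admits a lift $v\in V^{K_N}$ for some $N$, whence $\bar v$ is $K_N$-fixed in $V/U$; thus $V/U=\bigcup_n (V/U)^{K_n}$ is smooth.

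The main obstacle is the topological identification in the first step—asserting \eqref{eq:tddec:1} as an equality of LCTVSs and not merely as a decomposition of underlying vector spaces. In general, for a countable ascending union of closed subspaces, the strict inductive limit topology and any prescribed ambient LCTVS topology need not coincide; here the coincidence is forced by the specific structure of the family $\{E^{K_n}\}_{n\in\N}$ of continuous projections, which by smoothness converges pointwise to $\id_V$ and produces compatible continuous splittings at every finite level via \eqref{eq:tddec:0}.
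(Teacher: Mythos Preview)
Your argument has a genuine gap rooted in a mismatch of definitions. In this paper, ``smooth'' for a totally disconnected $\sf G$ is a \emph{topological} condition (following \cite{bowa}, X, \S1.3): $(\pi,V)$ is smooth precisely when $V=\lim_{n}V^{{\sf K}_n}$ as LCTVSs. Hence \eqref{eq:tddec:1} is essentially immediate from the definition together with \eqref{eq:tddec:0}; there is no ``ambient'' topology to be reconciled with the inductive limit. Your attempt to derive the topological identity from the mere existence of the continuous projections $E^{{\sf K}_n}$ (and their differences) is not valid: on $c_{00}$ equipped with the sup norm, the coordinate projections are continuous and converge pointwise to the identity, yet the topology is strictly coarser than the locally convex direct sum topology. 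Continuous splittings at each finite level do not force the strict inductive limit topology on the union.

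More seriously, your treatments of the subrepresentation and the quotient only establish the \emph{algebraic} equalities $U=\bigcup_n U^{{\sf K}_n}$ and $V/U=\bigcup_n(V/U)^{{\sf K}_n}$, which is not what is being asserted. You must show that the subspace topology on $U$ (resp.\ the quotient topology on $V/U$) coincides with the strict inductive limit of the $U^{{\sf K}_n}$ (resp.\ of the $(V/U)^{{\sf K}_n}$). The paper obtains this by first using \eqref{eq:tddec:1} to write $V=\bigtoplus_n V_n$ topologically, then observing $U_n\subseteq V_n$ and invoking \cite{bourTVS}, II, \S4, Sect.\ 5, Prop.\ 8.(i), to conclude $U=\bigtoplus_n U_n$ topologically, whence $U=\lim_n U^{{\sf K}_n}$. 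The quotient case is handled by the chain \eqref{eq:008}, using Prop.\ 8.(ii) of the same reference and the explicit isomorphism $(V/U)^{{\sf K}_n}\cong V^{{\sf K}_n}/U^{{\sf K}_n}$. Without these topological steps, your argument does not reach the conclusion in the sense required here.
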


\begin{proof}
	If $ (\pi,V) $ is a smooth $ \sf G $-representation, then $ V=\lim_{n}V^{{\sf K}_n}\overset{\eqref{eq:tddec:0}}=\lim_{n}\bigtoplus_{i=1}^nV_i=\bigtoplus_{n\in\N}V_n $, which shows \eqref{eq:tddec:1}. Exploiting \eqref{eq:tddec:0} once more, we get $U=\bigcup_{n=1}^\infty U^{{\sf K}_n}=\bigcup_{n=1}^\infty\bigoplus_{i=1}^n U_i=\bigoplus_{n=1}^\infty U_n$ as vector spaces for every subrepresentation $U$. Since obviously $ U_n\subseteq V_n $ for every $ n\in\N $, \eqref{eq:tddec:1} and \cite{bourTVS}, II, \S 4, Sect.\ 5, Prop.\ 8.(i) imply that
	$ U=\bigtoplus_{n=1}^\infty U_n=\lim_{n}\bigtoplus_{i=1}^nU_i\overset{\eqref{eq:tddec:0}}=\lim_{n}U^{{\sf K}_n} $, hence $ U $ is smooth. Finally, in order to prove that $ V/U $ smooth (if it is a $ \sf G $-representation at all, see Rem.\ \ref{rmk:qtreps} below), it suffices to note that there are the following isomorphisms of LCTVS:
	\begin{equation}\label{eq:008}
	\begin{aligned}
	V/U&\overset{\eqref{eq:tddec:1}}=\Big(\bigtoplus_{n\in\N}V_n\Big)/\Big(\bigtoplus_{n\in\N}U_n\Big)
	\cong\bigtoplus_{n\in\N}V_n/U_n
	=\lim_{n\to\infty}\bigtoplus_{i=1}^nV_i/U_i
	\cong\lim_{n\to\infty}\Big(\bigtoplus_{i=1}^nV_i\Big)/\Big(\bigtoplus_{i=1}^nU_i\Big)\\
	&\overset{\eqref{eq:tddec:0}}=\lim_{n\to\infty}V^{{\sf K}_n}/U^{{\sf K}_n}
	\cong\lim_{n\to\infty}(V/U)^{{\sf K}_n},	
	\end{aligned}
	\end{equation}
Here, the first and second isomorphisms are the canonical ones (cf.\ \cite{bourTVS}, II, \S 4, Sect.\ 5, Prop.\ 8.(ii)), while the last isomorphism is obtained from taking the inverse of the limit of the LCTVS-isomorphisms $ (V/U)^{{\sf K}_n} \ira V^{{\sf K}_n} / U^{{\sf K}_n} $, $v+U\mapsto E^{{\sf K}_n}(v) + U^{{\sf K}_n}$. 
\end{proof}

\begin{rem}\label{rmk:qtreps}
In this generality it is not automatic that the quotient of two (smooth) representations $V/U$ as in Lem.\ \ref{lem:tddec} is a representation. Indeed, it may fail both that the natural map $ {\sf G}\times V/U\to V/U $, $ (g,v+U)\mapsto\pi(g)v+U $ is continuous as well as that the quotient $V/U$ is complete. However, suppose that each quotient $V^{{\sf K}_n}/U^{{\sf K}_n}$ is complete and barrelled. Then, $ V/U $ is also complete and barrelled by \eqref{eq:008} and \cite{grothendieck_topovs}, Chp.\ 4, Part 1, Cor.\ 1 and Prop.\ 3. Thus, the separate continuity of the map $ {\sf G}\times V/U\to V/U $, $ (g,v+U)\mapsto\pi(g)v+U $, implies that $ V/U $ is a representation of $ \sf G $ (see \S\ref{sect:genreps}). 
\end{rem}

Likewise, if ${\sf G}$ is a real Lie group and $(\pi,V)$ a ${\sf G}$-representation, then the $C^\infty$-topology on its space of smooth vectors, i.e., the subspace topology from $C^\infty({\sf G},V)$, coincides with the locally convex topology given by the seminorms $p_{\nu,X}(v):=\nu(\pi(X)v)$, $\nu$ running through the continuous seminorms on $V$ and $X$ through $\mathcal U(\Lie({\sf G}))$, cf.\ \cite{casselman_extensions}, Lem.\ 1.2. Hence, also for Lie groups ${\sf G}$ any subrepresentation of a smooth ${\sf G}$-representation is smooth. Moreover, in the context of strict inductive limits, we obtain

\begin{lem}\label{prop:LFrepsm}
	Let $ V $ be a strict inductive limit of a sequence $ (V_n)_{n\in\N} $ of complete LCTVSs. Let $ (\pi,V) $ be a representation of a real Lie group ${\sf G}$ such that for each $ n $, the closed subspace $ V_n \subseteq V$ is a smooth ${\sf G}$-representation. Then, $ (\pi,V) $ is a smooth ${\sf G}$-representation.
\end{lem}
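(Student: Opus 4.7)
The plan is to verify smoothness of $(\pi,V)$ directly from the defining property, namely that every vector $v\in V$ has a $C^\infty$ orbit map $c_v:{\sf G}\to V$, ${\sf g}\mapsto\pi({\sf g})v$. Since $V=\bigcup_{n\in\N}V_n$ as a set, any given $v\in V$ lies in some step $V_n$. The hypothesis that $V_n$ is a smooth ${\sf G}$-subrepresentation means in particular that the orbit map $c_v^{V_n}:{\sf G}\to V_n$ is $C^\infty$ into the complete LCTVS $V_n$.

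Next, I would invoke the defining property of the strict inductive limit recalled in Section \ref{sect:LF}: the inclusion $\iota_n:V_n\hookrightarrow V$ is continuous, and the topology induced by $V$ on $V_n$ is precisely the original topology of $V_n$. Post-composition with the continuous linear map $\iota_n$ then turns the $C^\infty$-map $c_v^{V_n}$ into a $C^\infty$-map $c_v^V=\iota_n\circ c_v^{V_n}:{\sf G}\to V$. This last step uses the elementary but crucial fact that continuous linear maps between complete LCTVSs preserve $C^\infty$-curves (iterated difference quotients commute with continuous linear maps by linearity, and the limits are preserved by continuity); this is exactly the remark underlying Cor.\ 1.2 of \cite{casselman_extensions} in the setting of smooth vectors.

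Since $v\in V$ was arbitrary, every vector of $V$ is smooth, which is the definition of $(\pi,V)$ being a smooth ${\sf G}$-representation, as noted in \S\ref{sect:genreps}. There is no real obstacle; the lemma is essentially a transport of smoothness from the steps to the limit, and the only point of care is that we must know the inclusion $V_n\hookrightarrow V$ is continuous linear so that it preserves differentiability of the orbit map, which is exactly the content of the strict inductive limit construction.
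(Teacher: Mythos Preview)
Your argument establishes only half of what is needed. You correctly show that every vector $v\in V$ is smooth, i.e., has a $C^\infty$ orbit map into $V$; the paper dispatches this in one sentence (``Obviously, every $v\in V$ is smooth, as the steps $V_n$ inherit from $V$ their original topology''). But this is not the definition of $(\pi,V)$ being a smooth ${\sf G}$-representation in the sense used here (Borel--Wallach, Ch.~0, \S2.3--2.4, as invoked in \S\ref{sect:genreps}). A smooth representation requires that $V=V^\infty$ \emph{topologically}, where $V^\infty$ carries the $C^\infty$-topology --- equivalently, by the remark just above Lem.~\ref{prop:LFrepsm}, that all the seminorms $p_{\nu,X}(v)=\nu(\pi(X)v)$ are continuous on $V$ for $\nu$ a continuous seminorm on $V$ and $X\in\U(\g)$. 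Your proposal never addresses this.

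The paper's proof supplies precisely this missing step: to see that $p_{\nu,X}$ is continuous on the inductive limit $V$, it suffices (by \cite{bourTVS}, II, \S4, Sect.\ 4, Prop.\ 5(ii)) to check continuity of each restriction $p_{\nu,X}\big|_{V_n}=\nu\big|_{V_n}\circ\pi(X)\big|_{V_n}$, and this holds because $V_n$ is assumed smooth, so $\pi(X)\big|_{V_n}:V_n\to V_n$ is continuous. Without this argument, you have only shown $V=V^\infty$ as sets, not as LCTVSs, and the lemma does not follow.
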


\begin{proof}
Obviously, every $ v\in V $ is smooth, as the steps $ V_n $ inherit from $ V $ their original topology. It therefore suffices to prove that for every continuous seminorm $ \nu $ on $ V $ and $ X\in\U(\g) $, the seminorm $ p_{\nu,X} $ is continuous, i.e., by \cite{bourTVS}, II, \S4, Sect.\ 4, Prop.\ 5(ii), that the restrictions $ p_{\nu,X}\big|_{V_n}=\nu\big|_{V_n}\circ\pi(X)\big|_{V_n} $ are continuous, which holds by the smoothness of $ V_n $.
\end{proof}

\subsubsection{Representations of $G(\A)$}\label{sect:G(A)reps}
Let now $(\pi,V)$ be a representation of $G(\A)$. We will write $V^{\infty_\A}$ (resp.\ $V^{\infty_\R}$, resp.\  $V^{\infty_f}$) for the space of vectors $v\in V$, for which $c_v$ (resp.\ $c_v|_{G_\infty}$, resp.\ $c_v|_{G(\A_f)}$) is smooth. Then $V^{\infty_\A}$ (as well as $V^{\infty_\R}$ and  $V^{\infty_f}$) is stable under $G(\A)$ and dense in $V$ (by a classical argument of G\aa rding, see \cite{muic_zunar}, (10) and Cor.\ 2(3) for an explicit proof in the current setup).
We topologize $V^{\infty_\A}$ as the inductive limit $V^{\infty_\A}=\lim_{n\ra\infty} (V^{\infty_\R})^{K_n}$, where $ V^{\infty_\R} $ is equipped with the $ C^\infty $-topology. In this way, $V^{\infty_\A}$ 
becomes a representation of $G(\A)$ 
carrying a (potentially strictly) finer topology than the subspace topology coming from $V^{\infty_\R}$ (resp., $ V $). We say that $(\pi,V)$ is a {\it smooth} $G(\A)$-representation, if $V=V^{\infty_\A}$ holds topologically. One checks easily that $(\pi,V)$ is a smooth $G(\A)$-representation if and only if its restrictions to $G_\infty$ and $G(\mathbb A_f)$ are smooth representations. For every $G(\A)$-representation $(\pi, V)$, $V^{\infty_\A}$ is a smooth representation of $G(\A)$. If $V$ is Fr\'echet, then so is $V^{\infty_\R}$, and hence $V^{\infty_\A}$ becomes an LF-space in this case. We have the following basic

\begin{lem}\label{lem:subsquots}
	Let $ (\pi,V) $ be a smooth $ G(\A) $-representation, and let $ U\subseteq V $ be a subrepresentation. Then, $ U $ is a smooth $ G(\A) $-representation. Moreover, if $ V/U $ is a $ G(\A) $-representation (which holds, e.g., if each quotient $V^{K_n}/U^{K_n}$ is complete and barrelled, cf.\ Rem.\ \ref{rmk:qtreps}), then $ V/U $ is a smooth $ G(\A) $-representation.
\end{lem}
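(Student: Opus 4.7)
The plan is to invoke the criterion recorded in \S\ref{sect:G(A)reps} that a $G(\A)$-representation is smooth if and only if its restrictions to $G_\infty$ and $G(\A_f)$ are smooth, and then to verify each of these two smoothnesses separately for $U$ and for $V/U$. This splits each of the two assertions of the lemma into an ``archimedean'' and a ``totally disconnected'' part, and each of the two resulting halves can be settled by appealing to what has already been proved earlier in the section.

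For the subrepresentation $U$, the $G(\A_f)$-part is handled by Lem.\ \ref{lem:tddec} applied to the totally disconnected group $G(\A_f)$, which yields directly that $U$, as a subrepresentation of a smooth representation of a totally disconnected group, is again smooth. For the $G_\infty$-part, I would invoke the observation recorded in the paragraph preceding Lem.\ \ref{prop:LFrepsm}: by \cite{casselman_extensions}, Lem.\ 1.2, the $C^\infty$-topology on the space of smooth vectors of a $G_\infty$-representation is given by the seminorms $p_{\nu,X}(v)=\nu(\pi(X)v)$, so when $V$ is already smooth, the subspace topology on $U^{\infty_\R}\subseteq V=V^{\infty_\R}$ is generated by the restrictions of such seminorms, which are exactly the seminorms defining the $C^\infty$-topology on $U^{\infty_\R}$. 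Hence $U=U^{\infty_\R}$ topologically, i.e., $U$ is smooth as a $G_\infty$-representation.

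For the quotient $V/U$, assuming it is a $G(\A)$-representation at all, the $G(\A_f)$-smoothness is again immediate from Lem.\ \ref{lem:tddec}. For the $G_\infty$-part I would argue that every orbit map $g\mapsto\pi(g)(v+U)$ in $V/U$ factors as the composition of the smooth orbit map $g\mapsto\pi(g)v$ in $V$ with the continuous linear quotient map $V\to V/U$ and hence is smooth, so $V/U=(V/U)^{\infty_\R}$ as sets. To compare topologies, I would use that $V$ is smooth and $U$ is stable under each $\pi(X)$, $X\in\U(\g)$, so that $\pi(X):V\to V$ is continuous and descends to a continuous operator on $V/U$; thus each seminorm $p_{\bar\nu,X}$ is continuous on $V/U$, and the case $X=1$ recovers arbitrary continuous seminorms of $V/U$, showing that the $C^\infty$-topology on $(V/U)^{\infty_\R}$ coincides with the given topology of $V/U$. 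The only genuinely subtle point, and the reason a blanket hypothesis is needed in the second half of the lemma, is the one flagged in Rem.\ \ref{rmk:qtreps}: neither continuity of the induced action nor completeness of $V/U$ is automatic, so one cannot drop the standing assumption that $V/U$ is a $G(\A)$-representation; once this is granted, everything else is routine bookkeeping on the two factors.
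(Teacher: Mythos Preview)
Your proof is correct and follows essentially the same strategy as the paper: both reduce to checking smoothness for $G(\A_f)$ and $G_\infty$ separately, both invoke Lem.\ \ref{lem:tddec} for the totally disconnected side, and for $G_\infty$ both use that $U$ inherits smoothness as a subrepresentation (the observation preceding Lem.\ \ref{prop:LFrepsm}). The only cosmetic difference is in the $G_\infty$-part for $V/U$: the paper phrases the argument via the exact sequence $0\to U\hookrightarrow V\to (V/U)^{\infty_\R}\to 0$ and the universal property of the quotient topology, whereas you verify directly that each $p_{\bar\nu,X}$ is continuous on $V/U$ because $\pi(X)$ descends continuously---these are the same computation unpacked differently.
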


\begin{proof} 
	Since $ U $ is a smooth $ G(\A_f) $-representation by Lem.\ \ref{lem:tddec} and is obviously a smooth $ G_\infty $-representation, it is a smooth $ G(\A) $-representation. Next, suppose that $ V/U $ is a representation of $ G(\A) $. To prove that it is a smooth $ G_\infty $-representation, we note that by restricting the natural exact sequence of $G_\infty$-representations $0\ra U \hra V \twoheadrightarrow V/ U\ra 0$ to the subspaces of $G_\infty$-smooth vectors, we obtain a sequence of homomorphisms of $G_\infty$-representations $0\ra U\hra V\rightarrow (V/U)^{\infty_\R}\ra 0$. As $(V/U)^{\infty_\R}\subseteq V/U$, the homomorphism $V\rightarrow (V/ U)^{\infty_\R}$ remains surjective, whence this sequence is exact, too. Thus, the identity map defines a continuous linear bijection $V/U\ra (V/ U)^{\infty_\R}$. As its inverse is obviously continuous, $V/U=(V/U)^{\infty_\R}$ topologically, i.e., $ V/U $ is a smooth $ G_\infty $-representation. Moreover, $ V/U $ is a smooth $ G(\A_f) $-representation by Lem.\ \ref{lem:tddec}, hence it is a smooth $ G(\A) $-representation. 
\end{proof}	

\noindent A $G(\A)$-representation $(\pi,V)$ is called {\it admissible}, if for all irreducible (and hence finite-dimensional, cf.\ \S\ref{sect:genreps}) representations $(\rho,W)$ of $K_\A$ the space $\Hom_{K_\A}(W,V)$ is finite-dimensional. Combining \cite{borel_localement}, \S 2.5 and Prop.\ 3.6, one verifies that this definition -- intrinsic to the notion of $G(\A)$-representations -- is equivalent to the one in \cite{bowa}, XII, \S 1.4: For all $n\in \N$ and for all irreducible representations $(\rho_\infty,W_\infty)$ of $K_\infty$, the space $\Hom_{K_\infty}(W_\infty,V^{K_n})$ is finite-dimensional. If $V$ is admissible, $V^{\infty_\A}_{(K_\infty)}=V^{\infty_f}_{(K_\infty)}$ and $V^{\infty_\A}$ is an admissible $G(\A)$-representation with the same $K_\A$-isotypic components. \\\\
We also recall the notion of a $(\g_\infty,K_\infty, G(\A_f))$-{\it module}: That is a $(\g_\infty,K_\infty)$-module $V_0$, which carries a smooth linear action of $G(\A_f)$ (i.e., each vector $v\in V_0$ has a smooth orbit map $c_v:G(\A_f)\ra V_0$), which commutes with the actions of $\g_\infty$ and $K_\infty$, see \cite{bowa}, XII, \S 2.2. A $(\g_\infty,K_\infty, G(\A_f))$-module is called {\it admissible}, if for all irreducible (and hence finite-dimensional) representations $(\rho,W)$ of $K_\A$ the space $\Hom_{K_\A}(W,V_0)$ is finite-dimensional.
The assignment $V\mapsto V^{\infty_\A}_{(K_\infty)}$ defines a functor from the category of $G(\A)$-representations to the category of $(\g_\infty,K_\infty, G(\A_f))$-modules \cite{bowa}, XII, \S 1.3 \& \S 2.4. Obviously, a smooth representation $(\pi,V)$ of $G(\A)$ is admissible, if and only if its underlying $(\g_\infty,K_\infty, G(\A_f))$-module $V^{\infty_\A}_{(K_\infty)}=V_{(K_\infty)}$ is. The following lemma will be useful later:

\begin{lem}\label{lem:dense_subm}
	Let $ (\pi,V) $ be a smooth $ G(\A) $-representation. Let $ V_0 $ be an admissible $ (\g_\infty,K_\infty,G(\A_f)) $-submodule of $ V_{(K_\infty)} $ that is dense in $ V $. Then, $V_{(K_\infty)} = V_0$ and so $ (\pi,V) $ is admissible.
\end{lem}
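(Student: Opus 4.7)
The plan is to establish $(V^{K_n})_{(K_\infty)} = V_0^{K_n}$ for every $n \in \N$ and then to take the union over $n$ to deduce $V_{(K_\infty)} = V_0$. Since $V$ is smooth as a $G(\A_f)$-representation, any $K_\infty$-finite vector of $V$ generates a finite-dimensional $K_\infty$-module whose finitely many basis vectors are each fixed by some $K_{n_i}$, so the whole module lies in $V^{K_n}$ for $n = \max n_i$; similarly $V_0 = \bigcup_n V_0^{K_n}$ by smoothness of the $G(\A_f)$-action on $V_0$. Admissibility of $V$ will then be immediate from admissibility of $V_0 = V_{(K_\infty)}$.

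First I would transport the density of $V_0$ in $V$ down to each $K_n$-fixed subspace. The continuous averaging projector $E^{K_n} : V \to V^{K_n}$ recorded in \S\ref{sect:rep} preserves $V_0$: indeed, for any $v \in V_0$ fixed by some $K_m$, the Bochner integral defining $E^{K_n}(v)$ collapses into a finite sum of $G(\A_f)$-translates of $v$ (using that $K_n K_m / K_m$ is finite and $\pi(k)v$ is locally constant on $K_n$), which remains in the $G(\A_f)$-stable submodule $V_0$. Hence $E^{K_n}(V_0) = V_0^{K_n}$, and density of $V_0$ in $V$ combined with continuity of $E^{K_n}$ gives density of $V_0^{K_n}$ in $V^{K_n} = E^{K_n}(V)$.

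Next, for every irreducible $K_\infty$-representation $W_\infty$, I would use the continuous linear projection $P_{W_\infty} : V \to V[W_\infty]$ defined by the Bochner integral
\[ P_{W_\infty}(v) := (\dim W_\infty) \int_{K_\infty} \overline{\chi_{W_\infty}(k)}\,\pi(k)v\,dk. \]
This operator commutes with $E^{K_n}$ and, on the $K_\infty$-finite submodule $V_0$, coincides with the algebraic isotypic projection, so $P_{W_\infty}(V_0^{K_n}) = V_0^{K_n}[W_\infty]$. By admissibility of $V_0$, this last space is finite-dimensional, hence automatically closed in the Hausdorff LCTVS $V^{K_n}$; but it is also dense in $V^{K_n}[W_\infty] = P_{W_\infty}(V^{K_n})$, as the continuous image of the dense subspace $V_0^{K_n} \subseteq V^{K_n}$. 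Therefore $V^{K_n}[W_\infty] = V_0^{K_n}[W_\infty]$, and summing over all $W_\infty$ yields $(V^{K_n})_{(K_\infty)} = V_0^{K_n}$, completing the argument outlined in the first paragraph.

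The principal obstacle is purely topological: I must confirm that the two averaging operators $E^{K_n}$ and $P_{W_\infty}$, a priori only known to exist as Bochner integrals in the complete ambient $V$, restrict to purely algebraic operations on the submodule $V_0$ (so that finite-dimensionality from admissibility is actually available), and that the resulting interplay between density and automatic closedness of finite-dimensional subspaces in the Hausdorff LCTVS $V^{K_n}$ is enough to upgrade dense inclusion to equality. Once these points are verified, the proof is a formal consequence of the decomposition structures developed in \S\ref{sect:rep}.
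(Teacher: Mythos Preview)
Your proof is correct and follows essentially the same route as the paper's: both arguments use the continuous isotypic projectors to push the density of $V_0$ down to each finite-dimensional piece $V_0^{K_n}[W_\infty]$, then invoke that finite-dimensional subspaces of a Hausdorff LCTVS are closed. The only cosmetic difference is that the paper bundles your two projections $E^{K_n}$ and $P_{W_\infty}$ into a single operator $E_{\rho_\infty}^{K_n}$ (integrating over $K_\infty\times K_n$ at once) and cites \cite{muic_zunar}, Lem.\ 19 for the fact that it restricts to a projection of $V_0$, whereas you verify this directly via the finite-sum argument.
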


\begin{proof}
	 Let us prove that for all irreducible (hence finite-dimensional) representations $(\rho_\infty,W_\infty)$ of ${K_\infty} $ and $ n\in\N $, we have the equality of $ \rho_\infty $-isotypic components
	\begin{equation}\label{eq:009}
	V^{K_n}(\rho_\infty)=V^{K_n}_0(\rho_\infty).
	\end{equation}
	To this end recall the operator $ E_{\rho_\infty}^{K_n}:V\to V $,
	\begin{equation}\label{eq:Erhogl}
	 E_{\rho_\infty}^{K_n}(v):=\frac{d(\rho_\infty)}{\mathrm{vol}\, K_n}\,\int_{K_\infty}\int_{K_n}\overline{\xi_{\rho_\infty}(k_\infty)} \ R(k_\infty k_f)v \ d k_f \ dk_\infty, 
	 \end{equation}
	where $ d(\rho_\infty) $ and $ \xi_{\rho_\infty} $ are, respectively, the degree and character of $\rho_\infty $. Then $ E_{\rho_\infty}^{K_n}$ is a continuous projection onto $ V^{K_n}(\rho_\infty) $ and restricts to a projection of $ V_0 $ onto $ V^{K_n}_0(\rho_\infty) $, cf.\ \cite{muic_zunar}, Lem.\ 19. Thus, we have
	\[ V^{K_n}(\rho_\infty)=E_{\rho_\infty}^{K_n}(V)=E_{\rho_\infty}^{K_n}\left(\Cl_{V}(V_0)\right)\subseteq\Cl_{V}\left(E_{\rho_\infty}^{K_n}(V_0)\right)=\Cl_{V}\left(V^{K_n}_0(\rho_\infty)\right)=V^{K_n}_0(\rho_\infty), \]
	where the inclusion holds by the continuity of $ E_{\rho_\infty}^{K_n} $, and the last equality is true because $ V^{K_n}_0(\rho) $ is finite-dimensional. Since the reverse inclusion is obvious, this proves \eqref{eq:009}. Therefore, 
	\[ V_{(K_\infty)}=\bigcup_n\bigoplus_{\rho_\infty} V^{K_n}(\rho_\infty)\overset{\eqref{eq:009}}=\bigcup_n\bigoplus_{\rho_\infty} V^{K_n}_0(\rho_\infty)=V_0, \]
which shows the first claim. As $V$ is smooth, by our above observation, it is admissible, if and only if its underlying $(\g_\infty,K_\infty, G(\A_f))$-module $V^{\infty_\A}_{(K_\infty)}=V_{(K_\infty)}$ is. However, by what we have just proved, $V_{(K_\infty)}=V_0$, and so admissibility of $V$ follows from the assumption that $V_0$ is admissible.
\end{proof}

\subsubsection{Casselman-Wallach representations}\label{subsec:cass_wall}

Following \cite{casselman_extensions,wallachII}, we say that a $G_\infty$-representation $(\pi,V)$ on a Fr\'echet space $V$ is {\it of moderate growth}, if for every continuous seminorm $p$ on $V$, there exist a real number $d = d_p$ and a continuous seminorm $q = q_p$ on $V$ such that
\[
p\left( \pi(g_\infty)v \right) \le \norm{g_\infty}^{d_p} q_p(v) \qquad \text{ for all } g_\infty \in G_\infty, v \in V.
\]
Here, $\norm{g_\infty}$ is the group norm from \S \ref{sect:umg} as applied to $g_\infty=(g_\infty,\id)\in G(\A)$. We refer to \cite{wallachII}, \S 11.5 for the basic properties of smooth representations of moderate growth. We will call a smooth admissible $G_\infty$-representation $(\pi,V)$ of moderate growth a {\it Casselman-Wallach representation of $ G_\infty $}, if its underlying $(\g_\infty,K_\infty)$-module $V_{(K_\infty)}$ is finitely generated (cf.\ \cite{wallachII} \S 11.6.8) or equivalently $ \calZ(\g) $-finite (cf.\ \cite{vogan}, Cor.\ 5.4.16).\\\\ 
A smooth representation $ (\pi,V) $ of $ G(\A) $ such that for every $ n\in\N $, $ V^{K_n} $ is a Casselman-Wallach representation of $ G_\infty $, will be called a \textit{Casselman-Wallach representation of $ G(\A) $}. By the above said, every Casselman-Wallach representation $ (\pi,V) $ of $ G(\A) $ is admissible and $V$ is an LF-space. Moreover, the following well-known, classical results of Wallach translate into the ``global'' setup: 

\begin{lem}\label{lem:cass_wall_reps}
	\begin{enumerate}
		\item\label{lem:cass_wall_reps:1} Let $ (\pi,V) $ be a Casselman-Wallach representation of $ G(\A) $. If $ U $ is a subrepresentation of $ V $, then $ U $ and $ V/U $ are Casselman-Wallach representations of $ G(\A) $.
		\item\label{lem:cass_wall_reps:2} Two Casselman-Wallach representations $ (\pi,V) $ and $ (\sigma,W) $ of $ G(\A) $ are isomorphic if and only if the underlying $ (\g_\infty,K_\infty,G(\A_f)) $-modules $ V_{(K_\infty)} $ and $ W_{(K_\infty)} $ are isomorphic.
	\end{enumerate}
\end{lem}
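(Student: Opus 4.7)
The plan is to reduce both assertions to the corresponding classical statements of Wallach for Casselman–Wallach representations of the single Lie group $G_\infty$, applied at each level $K_n$. For the subrepresentation $U\subseteq V$ (which we take to be closed, as is standard), the closed $G_\infty$-invariant subspace $U^{K_n}=U\cap V^{K_n}$ of the Casselman–Wallach $G_\infty$-representation $V^{K_n}$ is itself a Casselman–Wallach representation of $G_\infty$ by Wallach's classical result. Combined with Lem.\ \ref{lem:subsquots}, which guarantees that $U$ is a smooth $G(\A)$-representation, this shows $U$ is Casselman–Wallach over $G(\A)$. For the quotient, I would first note that $V^{K_n}$ is Fr\'echet and $U^{K_n}$ is a closed subspace, so $V^{K_n}/U^{K_n}$ is Fr\'echet, hence complete and barrelled; invoking Rem.\ \ref{rmk:qtreps} and Lem.\ \ref{lem:subsquots}, $V/U$ is a smooth $G(\A)$-representation. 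The isomorphism $(V/U)^{K_n}\cong V^{K_n}/U^{K_n}$ already established in the display \eqref{eq:008}, together with Wallach's result that quotients of Casselman–Wallach $G_\infty$-representations by closed invariant subspaces are Casselman–Wallach, finishes the argument.

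\textbf{Plan for part (2).} The ``only if'' direction is immediate: an isomorphism $(\pi,V)\cong(\sigma,W)$ of $G(\A)$-representations restricts to a $(\g_\infty,K_\infty,G(\A_f))$-module isomorphism on the subspaces of $K_\infty$-finite vectors. For the ``if'' direction, suppose $\psi\colon V_{(K_\infty)}\ira W_{(K_\infty)}$ is a $(\g_\infty,K_\infty,G(\A_f))$-module isomorphism. Since $\psi$ is $G(\A_f)$-equivariant, it sends $V^{K_n}_{(K_\infty)}$ to $W^{K_n}_{(K_\infty)}$ for each $n$, yielding a $(\g_\infty,K_\infty)$-module isomorphism of the underlying Harish-Chandra modules of the Casselman–Wallach $G_\infty$-representations $V^{K_n}$ and $W^{K_n}$. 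By Wallach's automatic continuity/globalization theorem, each such $\psi\big|_{V^{K_n}_{(K_\infty)}}$ extends uniquely to a continuous $G_\infty$-equivariant isomorphism $\Phi_n\colon V^{K_n}\ira W^{K_n}$.

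The uniqueness in Wallach's theorem forces $\Phi_{n+1}\big|_{V^{K_n}}=\Phi_n$, since both sides agree on the dense subspace $V^{K_n}_{(K_\infty)}$. Passing to the strict inductive limit $V=\lim_{n\to\infty}V^{K_n}$, I obtain a continuous $G_\infty$-equivariant linear bijection $\Phi\colon V\to W$ by the universal property. To upgrade $G(\A_f)$-equivariance from $V_{(K_\infty)}$ (where $\Phi$ coincides with $\psi$, which is $G(\A_f)$-equivariant by hypothesis) to all of $V$, I would use the density of $V_{(K_\infty)}$ in $V$ (which holds because each $V^{K_n}_{(K_\infty)}$ is dense in the Casselman–Wallach $G_\infty$-representation $V^{K_n}$) together with the continuity of the $G(\A_f)$-action on $V$ and $W$. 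An analogous construction applied to $\psi^{-1}$ provides the inverse, giving the desired isomorphism of Casselman–Wallach $G(\A)$-representations.

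\textbf{Main obstacle.} The delicate point is not any single step but the bookkeeping in (2): one must verify that the levelwise $G_\infty$-isomorphisms $\Phi_n$ supplied by Wallach's theorem are genuinely compatible with the inclusions $V^{K_n}\hookrightarrow V^{K_{n+1}}$ (which is where uniqueness combined with density of $K_\infty$-finite vectors is crucial), and then that the resulting global map $\Phi$ intertwines the full $G(\A_f)$-action and not merely its $K_n$-pieces. Once density of $V_{(K_\infty)}$ in $V$ is in place, both issues dissolve into continuity arguments, but this is where the LF-structure — rather than a naive Fr\'echet structure — on Casselman–Wallach representations of $G(\A)$ is indispensable.
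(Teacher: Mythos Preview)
Your proposal is correct and follows essentially the same approach as the paper's own proof: reduce to Wallach's classical results on Casselman--Wallach $G_\infty$-representations at each level $K_n$, using Lem.~\ref{lem:subsquots} and the isomorphism $(V/U)^{K_n}\cong V^{K_n}/U^{K_n}$ for part (1), and for part (2) extend the $(\g_\infty,K_\infty)$-module isomorphisms levelwise via the Casselman--Wallach theorem, glue by uniqueness, and upgrade $G(\A_f)$-equivariance by density and continuity. The only cosmetic difference is that the paper observes directly that the limit map $T$ is a $G_\infty$-isomorphism (since each $T_n$ is) rather than constructing the inverse separately from $\psi^{-1}$.
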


\begin{proof}
(1) The analogous assertion about Casselman-Wallach representations of $ G_\infty $ follows from \cite{wallachII}, Lem.\ 11.5.2 and implies the claims about $ U $ and $ V/U $ using Lem.\ \ref{lem:subsquots} together with the following elementary fact: For every $ n\in\N $, the assignment $ v+U\mapsto E^{K_n}v+U^{K_n} $ defines a $ G_\infty $-equivariant LCTVS-isomorphism $ (V/U)^{K_n}\ira V^{K_n}/U^{K_n} $, cf.\ \eqref{eq:008}.

(2) It suffices to prove that a $ (\g_\infty,K_\infty,G(\A_f)) $-module isomorphism $ T_0:V_{(K_\infty)}\ira W_{(K_\infty)} $ extends to an isomorphism of $ G(\A) $-representations $ T:V\ira W $. By a celebrated result due to Casselman and Wallach, cf.\ see \cite{wallachII}, Thm.\ 11.6.7(2), for every $ n\in\N $ we can extend the $ (\g_\infty,K_\infty) $-module isomorphism $ T_0\big|_{V_{(K_\infty)}^{K_n}}:V_{(K_\infty)}^{K_n}\ira W_{(K_\infty)}^{K_n} $ in a unique way to an isomorphism of $ G_\infty $-representations $ T_n:V^{K_n}\ira W^{K_n} $. By the uniqueness of this extension, $ T_{n+1} $ extends $ T_n $. Hence, going over to the direct limit over $ n\in\N $, we obtain a well-defined isomorphism of $ G_\infty $-representations $ T:V\to W $, which extends $ T_0 $ (and is uniquely determined by this property). Since $ T_0 $ is $ G(\A_f) $-equivariant, so is $ T $ by continuity, hence $ T $ is an isomorphism of $ G(\A) $-representations $T:V\ira W $.
\end{proof}

\section{The LF-space of smooth-automorphic forms}

\subsection{Spaces of functions of uniform moderate growth}\label{sect:smumg}
Let $ C^\infty(G(F)\backslash G(\A)) $ be the space of smooth functions $ G(\A)\to\C $, which are invariant under multiplication by $ G(F)$ from the left. For $ n,d\in\N $ we define the space $ C^\infty_{umg,d}(G(F)\backslash G(\A))^{K_n} $ to consist of the right-$ K_n $-invariant functions $ f\in C^\infty(G(F)\backslash G(\A)) $ that satisfy the following uniform moderate growth condition: For every $ X\in\U(\g) $, 
\begin{equation}\label{eq:010}
p_{d,X}(f):=\sup_{g\in G(\A)}\ab{(Xf)(g)}\,\norm g^{-d}<\infty.
\end{equation}
We note that the space $ C^\infty_{umg}(G(F)\backslash G(\A)):=\bigcup_{n,d}C^\infty_{umg,d}(G(F)\backslash G(\A))^{K_n} $ does not depend on the choice of $ \iota_G$, used, in order to define $\norm g$, cf.\ \S \ref{sect:umg}, although the individual spaces $ C^\infty_{umg,d}(G(F)\backslash G(\A))^{K_n} $ do. It follows from \eqref{eq:ungl} that with this definition 
\begin{equation}\label{eq:incl}
 C^\infty_{umg,d}(G(F)\backslash G(\A))^{K_n} \subseteq C^\infty_{umg,d+1}(G(F)\backslash G(\A))^{K_n},
 \end{equation}
for all $ n,d\in\N $. We equip $ C^\infty_{umg,d}(G(F)\backslash G(\A))^{K_n} $ with the locally convex topology, defined by the seminorms $p_{d,X}$, $ X\in\U(\g) $. Let us point out that even with this natural topology, \eqref{eq:incl} does not necessarily define an embedding of closed subspaces. 

For fixed $ n $ and $ d $, it will be useful to relate the space $ C^\infty_{umg,d}(G(F)\backslash G(\A))^{K_n} $ to the spaces of smooth functions of uniform moderate growth on $ G_\infty $, which is done in the following standard way: We recall that there exists a finite set $ C\subseteq G(\A_f) $ such that $ G(\A)=\bigsqcup_{c\in C}G(F)(G_\infty \cdot c\cdot K_n) $, \cite{borel1963}, Thm.\ 5.1. For every $ c\in C $, the congruence subgroup $ \Gamma_{c,K_n}:=cK_nc^{-1}\cap G(F) $ of $ G(F) $ embeds into $ G_\infty $ as a discrete subgroup of finite covolume modulo $A^\R_G$. Let $ C^\infty_{umg,d}(\Gamma_{c,K_n}\backslash G_\infty) $ denote the space of smooth, left-$ \Gamma_{c,K_n} $-invariant functions $ f:G_\infty\to\C $ such that
\[ p_{\infty,d,X}(f):=\sup_{g_\infty\in G_\infty}\ab{(Xf)(g_\infty)}\,\norm{g_\infty}^{-d}<\infty,\qquad X\in\U(\g). \] Again, we equip $ C^\infty_{umg,d}(\Gamma_{c,K_n}\backslash G_\infty) $ with the locally convex topology defined by the seminorms $p_{\infty,d,X}$, $ X\in\U(\g) $. With this setup in place, we obtain

\begin{prop}\label{prop:dict_umg}
	The assignment
	$ f\mapsto\left(f(\rule{3mm}{0.15mm} \cdot c)\right)_{c\in C} $
	defines an isomorphism of Fr\'echet spaces
	\[ C^\infty_{umg,d}(G(F)\backslash G(\A))^{K_n}\cong\bigtoplus_{c\in C}C^\infty_{umg,d}(\Gamma_{c,K_n}\backslash G_\infty), \]
	which restricts to an isomorphism of their closed subspaces of $A^\R_G$-invariant functions.
\end{prop}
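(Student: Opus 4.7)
My plan is to reduce the statement to the open mapping theorem: once both sides are seen to be Fr\'echet and the proposed map $T: f\mapsto (f(\cdot\, c))_{c\in C}$ is a continuous linear bijection, it is automatically a topological isomorphism. I would organize the proof around three steps: construct $T$ and its algebraic inverse using the disjoint decomposition $G(\A)=\bigsqcup_{c\in C} G(F)(G_\infty cK_n)$; verify completeness of each side; and check continuity of $T$ via a direct seminorm comparison.

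For the algebraic bijection, the key observation is that for $\gamma\in\Gamma_{c,K_n}=cK_nc^{-1}\cap G(F)$ diagonally embedded, one has $\gamma_f=ck_0c^{-1}$ for some $k_0\in K_n$; consequently $\gamma g_\infty c=\gamma_\infty g_\infty\cdot ck_0$ in $G(\A)$, and combining left-$G(F)$- with right-$K_n$-invariance of $f$ yields the required left-$\Gamma_{c,K_n}$-invariance of $f(\cdot\,c)$ on $G_\infty$. The inverse is $S:(f_c)\mapsto f$ with $f(\gamma g_\infty ck):=f_c(g_\infty)$; the only ambiguity in writing $g\in G(F)G_\infty cK_n$ in this form is a left $\Gamma_{c,K_n}$-translation on $g_\infty$, which is killed by the invariance of $f_c$. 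Smoothness of $f$ on $G(\A)$ is inherited from smoothness of the $f_c$'s via the local product structure of $G_\infty\times G(\A_f)/K_n$.

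Completeness of each $C^\infty_{umg,d}(\Gamma_{c,K_n}\backslash G_\infty)$ (and hence of the finite direct sum) is a standard argument: the topology is metrizable via a countable basis of $\U(\g)$, and a Cauchy sequence $(f_m)$ converges uniformly on compact subsets of $G_\infty$ together with all derivatives to a smooth, left-$\Gamma_{c,K_n}$-invariant limit $f$, with $p_{\infty,d,X}(f)\le\liminf_m p_{\infty,d,X}(f_m)<\infty$ by a Fatou-style argument. The analogous reasoning yields Fr\'echet completeness of $C^\infty_{umg,d}(G(F)\backslash G(\A))^{K_n}$ — precisely the setting that, in the introduction, is explicitly contrasted with the (non-complete) $\calZ(\g)$-finite variants.

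Continuity of $T$ is then immediate: since $X\in\U(\g)=\U(\g_\infty)$ commutes with right translation by $c\in G(\A_f)$, one has $X(f(\cdot\,c))(g_\infty)=(Xf)(g_\infty c)$, and using $\norm{g_\infty c}\le C_0\norm{g_\infty}\norm{c}$ from \eqref{eq:ungl} one obtains $p_{\infty,d,X}(f(\cdot\,c))\le(C_0\norm{c})^d\,p_{d,X}(f)$. The open mapping theorem then promotes $T$ to a topological isomorphism. The restriction to $A^\R_G$-invariant subspaces follows because $A^\R_G\subseteq G_\infty$ commutes with right multiplication by any $c\in G(\A_f)$, so $T$ preserves the invariance condition; the respective subspaces are closed in their ambient Fr\'echet spaces (being defined pointwise by continuous evaluations), and the restricted map is therefore again an isomorphism of Fr\'echet spaces. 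I expect the main technical point to be the completeness of both Fr\'echet candidates — routine once stated correctly, but precisely the step at which one must not surreptitiously impose $\calZ(\g)$-finiteness, as warned in the introduction.
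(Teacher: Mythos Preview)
Your argument is correct, but the route differs from the paper's in one key respect. You prove completeness of the adelic space $C^\infty_{umg,d}(G(F)\backslash G(\A))^{K_n}$ directly (the right-$K_n$-invariance being exactly what guarantees the limit of a Cauchy sequence remains smooth on $G(\A)$), and then invoke the open mapping theorem to obtain continuity of $T^{-1}$ for free. The paper does the opposite: it only establishes that the archimedean side $\bigoplus_{c}C^\infty_{umg,d}(\Gamma_{c,K_n}\backslash G_\infty)$ is Fr\'echet (citing Wallach), then proves continuity of the inverse \emph{by hand} via a reduction-theory estimate --- choosing a Siegel set $\mathfrak S$ with $\mathfrak S\subseteq \Delta G_\infty C K_n$ for a finite $\Delta\subseteq G(F)$ and using \cite{moewal}, I.2.2(vii) to bound $p_{d,X}(f)$ by a constant times $\max_c p_{\infty,d,X}(f_c)$ --- and only \emph{then} concludes that the adelic space is Fr\'echet as a consequence of the isomorphism. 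Your approach is more elementary in that it avoids reduction theory entirely; the paper's approach yields an explicit seminorm estimate for $T^{-1}$, which can be useful when one needs quantitative control (and it makes transparent that Fr\'echet-ness of the adelic space is inherited from the archimedean picture rather than checked independently).
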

 
\begin{proof}
	It follows from \cite{wallach_smooth}, Lemma 2.7, that the spaces $C^\infty_{umg,d}(\Gamma_{c,K_n}\backslash G_\infty)$ (and hence their finite direct sum) are Fr\'echet. 
The map $ f\mapsto\left(f(\rule{3mm}{0.15mm} \cdot c)\right)_{c\in C} $ is obviously a continuous bijection, whose inverse is given by the following construction: For a $ (f_c)_{c\in C}\in\bigtoplus_{c\in C}C^\infty_{umg,d}(\Gamma_{c,K_n}\backslash G_\infty) $, let $ f\in C^\infty(G(F)\backslash G(\A))^{K_n} $ be defined by
	\[ f(\delta g_\infty c\ell):=f_c\left(g_\infty\right),\qquad \delta\in G(F),\ g_\infty\in G_\infty,\ c\in C,\ \ell\in K_n. \]
	In order to show that this inverse is continuous (only from which it will follow that $C^\infty_{umg,d}(G(F)\backslash G(\A))^{K_n}$ is also a Fr\'echet space), fix a Siegel set $ \mathfrak S $ in $ G(\A) $ as in \cite{moewal}, p.\ 20. Since the projection of $ \mathfrak S $ on $ G(\A_f) $ is compact, $ \mathfrak S\subseteq \Delta G_\infty CK_n $ for some finite set $ \Delta\subseteq G(F) $. Moreover, since $ G(\A)=G(F)\mathfrak S $, by \cite{moewal}, I.2.2(vii) there exists $ M\in\R_{>0} $ such that for every $ X\in\U(\g) $,
	\begin{align*}
	p_{d,X}(f)&\overset{\phantom{\eqref{eq:ungl}}}\leq M\,\sup_{g\in\mathfrak S}\ab{(Xf)(g)}\norm g^{-d}\\
	&\overset{\phantom{\eqref{eq:ungl}}}\leq M\,\sup_{\substack{\delta\in \Delta,\,g_\infty\in G_\infty\\c\in C,\,\ell\in K_n}}\ab{(Xf)(\delta g_\infty c\ell)}\,\norm{\delta g_\infty c\ell}^{-d}\\
	&\overset{\eqref{eq:ungl}}\leq M\,\sup_{\substack{\delta\in \Delta,\,g_\infty\in G_\infty\\c\in C,\,\ell\in K_n}}\ab{(Xf_c)\left(g_\infty\right)}\,C_0^{3d}\,\norm{\delta}^{d}\,\norm{g_\infty}^{-d}\,\norm{c}^d\,\norm{\ell}^{d}\\
	&\overset{\phantom{\eqref{eq:ungl}}}\leq M\,C_0^{3d}\,\left(\max_{\delta\in \Delta}\norm{\delta}^d\right)\left(\max_{\ell\in K_n}\norm \ell^{d}\right)\left(\max_{c\in C}\norm c^d p_{\infty,d,X}(f_c)\right).
	\end{align*}
	Thus, $ f\mapsto\left(f(\rule{3mm}{0.15mm} \cdot c)\right)_{c\in C} $ is an isomorphism of LCTVS and (therefore) $C^\infty_{umg,d}(G(F)\backslash G(\A))^{K_n}$ is a Fr\'echet space. The remaining claim about the closed subspaces of $A^\R_G$-invariant functions is obvious.
\end{proof} 

\begin{warnrem}\label{rem:warning}
Let us write $C^\infty_{umg,d}(G(F)\backslash G(\A)):=\bigcup_{n} C^\infty_{umg,d}(G(F)\backslash G(\A))^{K_n}$
for the space of smooth, left-$G(F)$-invariant functions, which satisfy \eqref{eq:010} for a given $d\in\N$, and let a subscript ``$(\calZ(\g))$'' denote the subspace of $\calZ(\g)$-finite vectors. We take the opportunity to point out that -- although {\it each} of the corresponding claims can be found in the sparse literature on smooth-automorphic forms -- actually {\it none} of the following three spaces is Fr\'echet, when equipped with the seminorms $p_{d,X}$, $X\in\U(\g)$: 
$$C^\infty_{umg,d}(G(F)\backslash G(\A))\supseteq C^\infty_{umg,d}(G(F)\backslash G(\A))_{(\calZ(\g))}\supseteq C^\infty_{umg,d}(G(F)\backslash G(\A))^{K_n}_{(\calZ(\g))}. $$ 
Indeed, one may easily verify that none of the above spaces is complete. \\\\
In our eyes, the resulting topological intricacies (as well as the resulting issues in providing completely accurate references) make it necessary to lay down some details on the topological structure of the (in this paper yet to be defined) space of smooth-automorphic forms and the canonical action of $G(\A)$ by right translations, which we will do in the remainder of this section. The reader, who is familiar with these functional-analytic subtleties, may skip them and head on to \S\ref{sect:smautreps}.
\end{warnrem}

\subsection{Smooth-automorphic forms and absolute bounds of exponents of growth}

\subsubsection{Spaces of smooth-automorphic forms}
Recall our arbitrary, but fixed ideal $\J \lhd\calZ(\g)$ of finite codimension. For each $n\in \N$, we let 
$$\calA^\infty_d(G)^{K_n,\J^n}:=\{\varphi\in C^\infty_{umg,d}(G(F)\backslash G(\A))^{K_n} \ | \ \J^n\varphi=0\}$$
be the (closed, and hence Fr\'echet, cf.\ Prop.\ \ref{prop:dict_umg}) subspace of $C^\infty_{umg,d}(G(F)\backslash G(\A))^{K_n}$ of functions, which are annihilated by the ideal $\J^n$. We obtain

\begin{prop}\label{prop:sm_inf}
Acted upon by right translation $R_\infty$, the spaces $\calA^\infty_d(G)^{K_n,\J^n}$ become Casselman-Wallach representations of $ G_\infty $.
\end{prop}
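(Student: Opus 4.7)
My plan is to verify, one by one, the four defining properties of a Casselman--Wallach representation of $G_\infty$ for $V := \calA^\infty_d(G)^{K_n, \J^n}$. First, $V$ is Fr\'echet, being the intersection inside the Fr\'echet space $C^\infty_{umg,d}(G(F) \backslash G(\A))^{K_n}$ of Prop.\ \ref{prop:dict_umg} of the kernels of the continuous operators furnished by the finite-codimensional ideal $\J^n \subset \calZ(\g)$. Via the isomorphism of Prop.\ \ref{prop:dict_umg}, $R_\infty$ corresponds to the natural right-translation action on a finite direct sum of spaces of the form $\calA^\infty_d(\Gamma_{c, K_n} \backslash G_\infty)^{\J^n}$, so the problem is essentially classical and purely archimedean in nature.

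Moderate growth and continuity of each operator $R_\infty(g_\infty)$ follow from the submultiplicativity \eqref{eq:ungl}: since $X \in \U(\g)$ is left-invariant one has $X(R_\infty(g_\infty)\varphi) = R_\infty(g_\infty)(X\varphi)$, and the change of variables $k = hg_\infty$ together with the estimate $\norm{k g_\infty^{-1}}^{-d} \leq C_0^d \norm{g_\infty}^d \norm{k}^{-d}$ yields
\[
p_{d, X}(R_\infty(g_\infty) \varphi) \;\leq\; C_0^d\, \norm{g_\infty}^d\, p_{d, X}(\varphi) \qquad \text{for all } g_\infty \in G_\infty,\ \varphi \in V.
\]
This shows both the continuity of $R_\infty(g_\infty)$ on $V$ and that $V$ is preserved by $R_\infty$ (right-$K_n$-invariance and $\J^n$-annihilation being obviously preserved). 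Moreover, $V$ is stable under $\U(\g)$: for $\varphi \in V$ and $X \in \U(\g)$, the function $X\varphi$ is smooth, right-$K_n$-invariant, annihilated by $\J^n$ (since $\J \subset \calZ(\g)$ commutes with $X$), and of uniform moderate growth via $p_{d, Y}(X\varphi) = p_{d, YX}(\varphi)$; the continuity of $\varphi \mapsto X\varphi$ is immediate from the same identity.

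Smoothness of the representation then follows from a standard Taylor-expansion argument: for $\varphi \in V$ and $X \in \g$, the quotient $t^{-1}\bigl(R_\infty(\exp tX)\varphi - \varphi\bigr) - X\varphi$ converges to $0$ as $t \to 0$ in each seminorm $p_{d, Y}$, $Y \in \U(\g)$, with the required uniformity in the base point supplied precisely by the uniform moderate growth built into the definition of $p_{d, Y}$. Iteration gives smoothness of the orbit maps $c_\varphi: G_\infty \to V$, and the equality of the resulting $C^\infty$-topology with the original Fr\'echet topology on $V$ is forced by the identity $p_{d, Y}(X\varphi) = p_{d, YX}(\varphi)$ in combination with \cite{casselman_extensions}, Lem.\ 1.2. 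Hence $(R_\infty, V)$ is a smooth $G_\infty$-representation of moderate growth.

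For admissibility and finite generation of the underlying $(\g_\infty, K_\infty)$-module, the latter reduces via \S\ref{subsec:cass_wall} to $\calZ(\g)$-finiteness, which is immediate from the $\J^n$-annihilation. For the former, I would fix an irreducible $K_\infty$-representation $(\rho_\infty, W_\infty)$ and observe that every vector in the $\rho_\infty$-isotypic component of $V$ is a smooth, right-$K_n$-invariant, $K_\infty$-finite function of type $\rho_\infty$, $\calZ(\g)$-finite, and of moderate growth on $G(F) \backslash G(\A)$ -- that is, a classical $K_\infty$-finite automorphic form -- and Harish-Chandra's finiteness theorem shows that the space of such forms is finite-dimensional. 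I expect the main technical obstacle to lie in the smoothness step, since coupling the Taylor remainder to the full family of uniform seminorms $p_{d, Y}$ requires controlling the growth uniformly in both the base point and the order of differentiation, which is precisely the subtlety highlighted in Rem.\ \ref{rem:warning}.
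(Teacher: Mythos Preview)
Your proposal is correct and follows essentially the same approach as the paper. The paper's proof is more condensed: it outsources the smoothness and moderate-growth step to \cite{wallach_smooth}, \S 2.5 and Lem.\ 2.7 (which is precisely the Taylor-expansion argument you sketch), transfers these properties to the adelic space via Prop.\ \ref{prop:dict_umg}, and then passes to the closed $G_\infty$-stable subspace $\calA^\infty_d(G)^{K_n,\J^n}$ by citing \cite{wallachII}, Lem.\ 11.5.2; admissibility is handled by \cite{bojac}, \S 4.3(i), which is the automorphic-forms incarnation of the Harish-Chandra finiteness theorem you invoke.
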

\begin{proof}
The arguments of \cite{wallach_smooth}, \S 2.5 and Lem.\ 2.7 imply that $C^\infty_{umg,d}(\Gamma_{c,K_n}\backslash G_\infty)$ is a smooth $G_\infty$-representation by right translation of moderate growth. Hence, so is $C^\infty_{umg,d}(G(F)\backslash G(\A))^{K_n}$ by virtue of Prop.\ \ref{prop:dict_umg} and so is the $G_\infty$-stable, closed subspace $\calA^\infty_d(G)^{K_n,\J^n}$, cf.\ \cite{wallachII}, Lem.\ 11.5.2. Moreover, the $ (\g_\infty,K_\infty) $-module $ \left(\calA^\infty_d(G)^{K_n,\J^n}\right)_{(K_\infty)} $ is admissible by \cite[\S 4.3.(i)]{bojac} and is obviously $ \calZ(\g) $-finite. This shows the claim. 
\end{proof}

\begin{defn}\label{def:smautf}
A {\it smooth-automorphic form} is an element of one of the spaces 
$$\calA^\infty_\J(G):=\bigcup_{n,d} \calA^\infty_d(G)^{K_n,\J^n}, $$
where $ \J $ runs over the ideals of finite codimension in $ \calZ(\g) $.
\end{defn}

\begin{rem}
Our definition is obviously compatible with the usual one: A smooth function $f: G(\A)\ra\C$ is usually defined to be a smooth-automorphic form, if it is left-$G(F)$-invariant, right-$K_{\A_f}$-finite\footnote{This condition is recalled here only for the sake of matching the way smooth-automorphic forms are usually defined: Let us take the opportunity to point out that smoothness of $\varphi$ makes the condition of being right-$K_{\A_f}$-finite superfluous.}, annihilated by an ideal $\J \lhd\calZ(\g)$ of finite codimension and of uniform moderate growth. See \cite{wallach_smooth}, \S 6.1, where to our knowledge the notion of smooth-automorphic forms has first been introduced (in the context of real reductive groups and their arithmetic subgroups) or \cite{cogdell_fields}, \S 2.3 (for $G=\GL_n$). 
\end{rem}

\noindent We want to consider $\calA^\infty_\J(G)$ as a representation of $G(\A)$ under right translation, whence we have to specify a locally convex topology on $\calA^\infty_\J(G)$, making it into a complete LCTVS, cf.\ \S\ref{sect:rep}. In a very first try, it is tempting to put an ordering on the set of tuples $(d,n)\in\N\times\N$, with defining condition that $\calA^\infty_d(G)^{K_n,\J^n}\subset \calA^\infty_{d+1}(G)^{K_{n+1},\J^{n+1}}$ and then equip $\calA^\infty_\J(G)$ with the inductive limit topology given by the natural, continuous inclusions.\\\\ However, estimating the respective seminorms in question, it is a priori not clear at all that the locally convex topology on $\calA^\infty_{d+1}(G)^{K_{n+1},\J^{n+1}}$ (defined by the seminorms $p_{d+1,X}$, $X\in\U(\g)$), induces the original topology on $\calA^\infty_d(G)^{K_n,\J^n}$ (defined by the seminorms $p_{d,X}$, $X\in\U(\g)$), nor if $\calA^\infty_d(G)^{K_n,\J^n}$ is closed in $\calA^\infty_{d+1}(G)^{K_{n+1},\J^{n+1}}$, see \S\ref{sect:smumg}. Whence, it is a priori even unclear whether the latter inductive limit topology yields a Hausdorff space at all, and even less, if the resulting space is a complete LCTVS. \\\\ 
In order to overcome this problem, we shall make use of the following general result:  

\begin{prop}\label{prop:same_d}
Let $\J \lhd\calZ(\g)$ be an arbitrary, but fixed ideal of finite codimension. Then there exists $d\in\N$ such that 
$$\calA^\infty_\J(G)\subset C^\infty_{umg,d}(G(F)\backslash G(\A)).$$
\end{prop}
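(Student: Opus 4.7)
My plan is to combine the classical uniform growth bound for $K_\infty$-finite automorphic forms with the uniqueness part of the Casselman--Wallach globalization theorem, using the latter as a mechanism to propagate uniform moderate growth from the $(\g_\infty,K_\infty)$-module of $K_\infty$-finite vectors to the entire smooth-automorphic Fr\'echet space. The key classical input I would cite is the following: for any ideal $\J\lhd\calZ(\g)$ of finite codimension there exists a single $d_0=d_0(\J)\in\N$ such that every $K_\infty$-finite, right-$K$-finite (for some open compact $K\subseteq G(\A_f)$), left-$G(F)$-invariant smooth function on $G(\A)$ annihilated by some power of $\J$ lies in $C^\infty_{umg,d_0}(G(F)\backslash G(\A))$. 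This is the classical Harish-Chandra estimate, cf.\ \cite{moewal}, Lem.\ I.2.17.

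Given an arbitrary $\varphi\in\calA^\infty_\J(G)$, I would first fix $n,d\in\N$ with $\varphi\in V:=\calA^\infty_d(G)^{K_n,\J^n}$ and, using \eqref{eq:incl}, assume $d\geq d_0$. Setting $V_0:=\calA^\infty_{d_0}(G)^{K_n,\J^n}$, the natural inclusion $V_0\hookrightarrow V$ is continuous and $G_\infty$-equivariant (the defining seminorms of $V_0$ dominate those of $V$ up to a power of the constant $c_0$ from \eqref{eq:ungl}), so it would suffice to prove $V=V_0$ as subsets of $C^\infty_{umg,d}(G(F)\backslash G(\A))^{K_n}$. By Prop.\ \ref{prop:sm_inf}, both $V$ and $V_0$ are Casselman--Wallach representations of $G_\infty$, and their underlying Harish-Chandra modules $V_{(K_\infty)}$ and $(V_0)_{(K_\infty)}$ coincide: one inclusion is trivial from $V_0\subseteq V$, while the reverse is exactly the content of the classical input, since every $K_\infty$-finite vector in $V$ is a $\J^n$-annihilated, $K_n$-invariant, $K_\infty$-finite classical automorphic form and therefore already lies in $C^\infty_{umg,d_0}$. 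By the uniqueness part of Wallach's globalization theorem (\cite{wallachII}, Thm.\ 11.6.7), the identity on this common $(\g_\infty,K_\infty)$-module extends uniquely to a continuous $G_\infty$-equivariant isomorphism $\phi: V\ira V_0$.

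The remaining step is to identify $\phi$ with the canonical identification of $V$ and $V_0$ as spaces of functions. For each fixed $g\in G(\A)$, the evaluation $E_g: \psi\mapsto\psi(g)$ is a continuous linear functional on both $V$ and $V_0$, since $|\psi(g)|\leq\norm{g}^d\,p_{d,1}(\psi)$ bounds it by a constant (depending on $g$) times a defining seminorm. The two continuous functionals $\psi\mapsto\psi(g)$ and $\psi\mapsto\phi(\psi)(g)$ on $V$ agree on the dense subspace $V_{(K_\infty)}$, on which $\phi$ acts as the identity by construction, hence they agree on all of $V$. Therefore $\phi(\psi)=\psi$ pointwise, so $V=V_0\subseteq C^\infty_{umg,d_0}(G(F)\backslash G(\A))^{K_n}$, as required. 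I expect the main obstacle to be precisely this last step: the Casselman--Wallach theorem delivers only an abstract isomorphism of topological vector spaces between two a priori distinct globalizations, and the argument crucially uses continuity of pointwise evaluation together with density of $V_{(K_\infty)}$ in $V$ to conclude that this abstract isomorphism is literally the canonical identification on functions.
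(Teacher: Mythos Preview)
Your argument is correct and takes a genuinely different route from the paper's proof. The paper argues directly and uniformly for all (not necessarily $K_\infty$-finite) $\varphi\in\calA^\infty_\J(G)$: it analyzes the constant terms $\varphi_P(\,\cdot\,g)|_{A_P^\R}$ via the Harish--Chandra homomorphism, shows they are polynomial-exponential with exponent set $\Lambda_P$ depending only on $\J$ (not on the power $k$ with $\J^k\varphi=0$), and then invokes a lemma extracted from Bernstein--Lapid \cite{BL} to the effect that the uniform growth exponent is controlled by the sets $\Lambda_P$ alone and is insensitive to the polynomial degree $N$. Your approach instead treats the $K_\infty$-finite case as known and then bootstraps to the smooth case by exploiting Prop.~\ref{prop:sm_inf} together with the uniqueness of Casselman--Wallach completions: since $V=\calA^\infty_d(G)^{K_n,\J^n}$ and $V_0=\calA^\infty_{d_0}(G)^{K_n,\J^n}$ are both Casselman--Wallach with the same underlying $(\g_\infty,K_\infty)$-module, they must coincide, and you identify the abstract isomorphism with the identity via continuity of point evaluations. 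This last step is perfectly fine.

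The one point worth flagging is your ``classical input''. What you actually need is that a single exponent $d_0=d_0(\J)$ works for $K_\infty$-finite forms annihilated by \emph{any} power of $\J$; that is, the uniform-growth exponent is insensitive to passing from $\J$ to $\J^n$. This is true, but it is not quite a one-line citation of \cite{moewal}, Lem.~I.2.17: to justify it one must observe that $\J$ and $\J^n$ determine the same exponent sets $\Lambda_P$ (same radical) and that the growth exponent in the relevant estimate depends only on these $\Lambda_P$ and not on the polynomial degree---which is precisely the content the paper isolates as Lem.~\ref{lem:002} (via \cite{BL}). So your reduction hides, rather than eliminates, the analytic heart of the paper's argument. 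What your method genuinely buys is a clean, representation-theoretic passage from the $K_\infty$-finite statement to the smooth one; what the paper's method buys is a self-contained proof that does not invoke the Casselman--Wallach machinery at all.
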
 
In other words, having fixed $\J$ (or, equivalently, the string of ideals $\{\J^n\}_{n\in\N}$), then there exists an exponent $d$, such that all smooth-automorphic forms in $\calA^\infty_\J(G)$ satisfy \eqref{eq:010} for {\it the same} such $d$. In order to prove Prop.\ \ref{prop:same_d} we shall need the following preparatory considerations:

\subsubsection{Growth conditions for constant terms}\label{sub:007}
Let $P=LN\in\calP$. Let us introduce coordinates $ \a_P\ira\R^{n_P} $, $ \lambda\mapsto\underline \lambda $, and the following multi-index notation:
\[ \lambda^{\alpha}:=\underline \lambda_1^{\alpha_1}\cdots \underline \lambda_{n_P}^{\alpha_{n_P}},\qquad \lambda\in\a_P,\ \alpha=(\alpha_1,\ldots,\alpha_{n_P})\in\Z^{n_P}_{\geq0}. \]
For a multi-index $ \alpha $ as above, we write $|\alpha|:=\sum_{i=1}^{n_P}\alpha_i $.\\\\
For every subset $ \Lambda $ of $\check\a_{P,\C}$ and $ N\in\Z_{\geq0} $, let $ \mathfrak{P}_{A_P^\R}(\Lambda,N) $ be the space of functions  $ \phi:A_P^\R\to\C $ of the form
\[ \phi(a)=\sum_{\lambda\in\Lambda'}e^{\langle\lambda,H_P(a)\rangle}\sum_{|\alpha|\leq N}c_{\lambda,\alpha}H_P(a)^\alpha, \]
where $ \Lambda' $ runs through the finite subsets of $ \Lambda $, and $ c_{\lambda,\alpha}\in\C $ (``polynomial exponential functions''). For a continuous, left-$G(F)$-invariant function $f: G(\A)\ra\C$, we write as usual \[ f_P(g):=\int_{N(F)\backslash N(\A)}f(ng)\,dn, \]
for the constant term along $P$, cf.\ \cite{moewal}, I.2.6.

\begin{lem}\label{lem:002}
For every $P\in\calP$, let $ \Lambda_P $ be a finite subset of $\check\a_{P,\C}$. Then there exists an $ r\in\R_{>0} $ such that 
$$\{f\in C^\infty_{umg}(G(F)\backslash G(\A))\ | \ \forall P\in\calP\quad\exists N\in\Z_{\geq0}\quad\forall g\in G(\A)\quad f_P(\rule{3mm}{0.15mm} \cdot g)\in\mathfrak{P}_{A_P^\R}(\Lambda_P,N)\}$$	
is contained in $C^\infty_{umg,r}(G(F)\backslash G(\A))$.	
\end{lem}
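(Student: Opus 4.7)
The first step of my plan is to observe that the set described in the statement is stable under the right regular action of $\U(\g)$. Indeed, for $X\in\g_\infty$ the right-regular derivative commutes both with integration over $N_P(F)\backslash N_P(\A)$ (which defines the constant term) and with left multiplication by $A_P^\R$, so $(Xf)_P(\,\cdot\, g)=X(f_P)(\,\cdot\, g)$. Moreover, differentiating a polynomial-exponential expression in the parameter $g$ only acts on the coefficients $c_{\lambda,\alpha}(g)$, keeping us inside $\mathfrak{P}_{A_P^\R}(\Lambda_P,N_P)$ with the same $\Lambda_P,N_P$. Inductively this extends to every $X\in\U(\g)$. Consequently, it is enough to show that every $f$ in the set satisfies $|f(g)|\leq C_f\,\|g\|^r$ for all $g\in G(\A)$ and some $r$ depending only on $\{\Lambda_P\}_{P\in\calP}$, since applying the same bound to $Xf$ then yields $p_{r,X}(f)<\infty$ for every $X\in\U(\g)$.

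Next, using left $G(F)$-invariance of $f$ and the fact that $G(\A)=G(F)\cdot\mathfrak S$ for a fixed Siegel set $\mathfrak S$ adapted to $P_0$, together with the well-known control of $\|\,\cdot\,\|$ under this reduction (as in the proof of Prop.~\ref{prop:dict_umg}, invoking \cite{moewal}, I.2.2(vii)), I would reduce to bounding $|f(g)|$ for $g\in\mathfrak S$ only. Parametrising $g=\omega a k$ with $\omega$ in a fixed compact subset of $P_0(\A)^1$, $a\in A_{P_0}^\R$ in a shifted positive chamber, and $k\in K_\A$, one has $\|g\|\asymp\|a\|$. The key analytic input is a Langlands--Arthur-type truncation estimate (see, e.g., \cite{moewal}, Lem.~I.2.10 or I.4.1): for a sufficiently regular truncation parameter $T$ one obtains a decomposition
\[
f(g)=\sum_{P\supseteq P_0}\varepsilon_{P,T}(g)\,f_P(g)+R_T(g)\qquad(g\in\mathfrak S),
\]
in which the $\varepsilon_{P,T}$ are bounded combinatorial characteristic-function-type factors and $R_T$ is rapidly decreasing in $\|g\|^{-N}$ for every $N$.

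Plugging the hypothesis $f_P(\,\cdot\, g)\in\mathfrak{P}_{A_P^\R}(\Lambda_P,N_P)$ into this decomposition, each term $|f_P(ag)|$ is bounded by a constant depending on $f,\omega,k$ times $\|a\|^{r_P+\epsilon}$, where $r_P$ depends only on the real parts of the elements of the finite set $\Lambda_P$ and on fixed geometric data of $G$ (the polynomial factor $H_P(a)^\alpha$, which grows like $(\log\|a\|)^{|\alpha|}$, being harmlessly absorbed into $\|a\|^\epsilon$). Choosing $r\in\N$ strictly larger than $\max_{P\in\calP} r_P$ then produces $|f(g)|\leq C_f\,\|g\|^r$ on $\mathfrak S$ with $r$ independent of $f$, completing the proof. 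The principal technical obstacle, in my view, is making the truncation estimate precise at the level of generality required here --- namely for $f$ merely in $C^\infty_{umg}$, without $\calZ(\g)$- or $K_\infty$-finiteness; but all ingredients (smoothness, uniform moderate growth, reduction theory on $G(F)\backslash G(\A)$) are present, so the standard arguments of \cite{moewal}, Chap.~I should go through.
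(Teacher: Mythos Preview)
Your outline is correct. The paper itself gives no self-contained argument: its entire proof is a citation to \cite{BL}, Lem.~6.12, together with the remark that in the proof there the exponent $r$ depends only on $\dim\a_0$ and on $\max_{P}\max_{\lambda\in\Lambda_P}\|\lambda\|$, not on the polynomial degrees $N$. What you have sketched is essentially how that cited lemma is established, so substantively you and the paper are doing the same thing.

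A few small remarks on the details. Your Step~1 reduction is valid once one notes that the coefficients $c_{\lambda,\alpha}(g)$ in the expansion of $f_P(\,\cdot\,g)$ are themselves smooth in $g$; this follows from a Vandermonde-type argument, since they are recovered as fixed finite linear combinations of values $f_P(a_jg)$ for suitably chosen $a_j\in A_P^\R$. In Step~3, the decomposition you quote is not literally what \cite{moewal}, Lem.~I.2.10 or I.4.1 give; the precise mechanism is Arthur's partition $\sum_{P\supseteq P_0}F^P(\cdot,T)\,\tau_P(H_P(\cdot)-T)=1$ on the Siegel set, combined with the rapid decrease of $f-f_P$ on the support of $F^P(\cdot,T)\,\tau_P$ (this last is the content of \cite{moewal}, Lem.~I.2.10). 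Your closing worry is unnecessary: that lemma requires only uniform moderate growth, not $\calZ(\g)$- or $K_\infty$-finiteness, so all ingredients are indeed available. Finally, you correctly isolate the key uniformity point---that the polynomial factor $(\log\|a\|)^{|\alpha|}$ is absorbed into $\|a\|^\epsilon$, so $r$ depends only on the exponents $\Lambda_P$ and not on $N$---which is exactly the observation the paper extracts from \cite{BL}.
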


\begin{proof}
	This follows from the proof of \cite{BL}, Lem.\ 6.12, noting that the constant $ r $ whose existence is proved in said lemma depends, in the notation of {\it loc.\ cit.}, only on $ \dim \a_0 $ and on the constant $ \max_{P}\max_{(\lambda_1,\ldots,\lambda_{n_P})\in\Lambda_P}\max_{i=1}^{n_P}\norm{\lambda_i}$, where $ \norm{\,\cdot\,} $ is a fixed norm on $X_P $, and does not depend on the numbers $ n_P $.
\end{proof}

\subsubsection{Ideals of finite codimension}\label{sub:008}
\noindent Recall the symmetric algebra $S(\a_{P,\C})\ira\calZ(\a_P)$ of $\a_{P,\C}$, cf.\ \S\ref{sect:liegrps}, which we identify with the algebra of polynomials on $\check\a_{P,\C}$. Every ideal $ \mathcal I \lhd  \calZ(\a_P) $ of finite codimension contains an ideal of the form
\[ \calZ(\a_P;\Lambda,N):=\left\{Y\in\calZ(\a_P) \ | \ Y\text{ vanishes of order }\geq N\text{ in each }\lambda\in\Lambda\right\}, \]
where $ \Lambda $ is a finite subset of $ \check\a_{P,\C} $ and $ N\in\N $, see \cite{moewal}, I.3.1. Next note that for such $ \mathcal I $, $ \Lambda $ and $ N $, and for every $ k\in\N $ there exists $ M\in\N $ (depending on $ \mathcal I $, $ \Lambda $, $ N $, and $k$), such that
$$\bigcap_{\lambda\in\Lambda}\calZ\left(\a_P;\lambda,N\right)^M\subseteq \left(\bigcap_{\lambda\in\Lambda}\calZ\left(\a_P;\lambda,N\right)\right)^k.$$
Indeed, the existence of such an exponent $M\in \N$ is a corollary of the following general

\begin{lem}\label{lem:003}
	Let $ R $ be a commutative Noetherian ring. Let $ n,k\in\N $, and let $ \mathcal I_1,\ldots,\mathcal I_n $ be ideals in $ R $. Then, there exists $ M\in\N $ such that
	\[ \bigcap_{i=1}^n\mathcal I_i^M\subseteq \left(\bigcap_{i=1}^n\mathcal I_i\right)^k. \]
\end{lem}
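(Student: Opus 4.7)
The plan is to prove the lemma by induction on $n$, with the Artin--Rees lemma providing the crucial step that allows us to compare the different $\mathcal{I}_i$-adic topologies. The base case $n=1$ is trivial since we may simply take $M=k$.

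For the inductive step, set $\mathcal{I} := \bigcap_{i=1}^{n-1} \mathcal{I}_i$, so that $\bigcap_{i=1}^{n} \mathcal{I}_i = \mathcal{I} \cap \mathcal{I}_n$. The starting observation is that $\mathcal{I} \cdot \mathcal{I}_n \subseteq \mathcal{I} \cap \mathcal{I}_n$, whence
\[
\mathcal{I}^k \cdot \mathcal{I}_n^k \;\subseteq\; (\mathcal{I} \cap \mathcal{I}_n)^k \;=\; \Bigl(\bigcap_{i=1}^{n}\mathcal{I}_i\Bigr)^k.
\]
Thus it suffices to produce an $M$ with $\bigcap_{i=1}^{n} \mathcal{I}_i^M \subseteq \mathcal{I}^k \cdot \mathcal{I}_n^k$.

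First I would invoke the inductive hypothesis (applied to $\mathcal{I}_1,\ldots,\mathcal{I}_{n-1}$ and the exponent $k$) to find $M_1 \in \mathbb{N}$ such that
\[
\bigcap_{i=1}^{n-1} \mathcal{I}_i^{M_1} \;\subseteq\; \mathcal{I}^k.
\]
Next, I would apply the Artin--Rees lemma to the Noetherian ring $R$, the ideal $\mathcal{I}_n$, and the submodule $\mathcal{I}^k \subseteq R$: there exists $c \in \mathbb{N}$ such that for all $m \geq c$,
\[
\mathcal{I}_n^{m} \cap \mathcal{I}^k \;\subseteq\; \mathcal{I}_n^{m-c}\cdot \mathcal{I}^k.
\]
Choosing $M := \max(M_1,\; k+c)$, we combine these estimates:
\[
\bigcap_{i=1}^{n}\mathcal{I}_i^{M}
= \Bigl(\bigcap_{i=1}^{n-1}\mathcal{I}_i^{M}\Bigr) \cap \mathcal{I}_n^{M}
\subseteq \mathcal{I}^k \cap \mathcal{I}_n^M
\subseteq \mathcal{I}_n^{M-c}\cdot \mathcal{I}^k
\subseteq \mathcal{I}_n^k \cdot \mathcal{I}^k
\subseteq \Bigl(\bigcap_{i=1}^{n}\mathcal{I}_i\Bigr)^k,
\]
completing the induction.

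The main obstacle is conceptual rather than computational: the inclusions $\mathcal{I}_i \supseteq \bigcap_j \mathcal{I}_j$ give power bounds in the \emph{wrong} direction, so a naive attempt with $M = nk$ or similar cannot work without the Noetherian hypothesis. The content of the lemma really lies in converting the ``soft'' intersection topology into a ``hard'' product, which is precisely what Artin--Rees accomplishes; the induction is needed because Artin--Rees is naturally formulated for a single pair of ideals, while here we have $n$ of them.
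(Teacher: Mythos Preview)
Your proof is correct and follows essentially the same approach as the paper: induction on $n$ with the Artin--Rees lemma as the key ingredient to pass from an intersection to a product. The paper treats the case $n=2$ explicitly and declares the induction step ``obvious'', whereas you spell out the full inductive step; the content is the same.
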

\begin{proof}
	Lacking a good reference, we sketch the proof. Arguing inductively, let $ n=2 $. Then, by the lemma of Artin-Rees \cite[\S 2.3, Lem.\ 1]{bosch}, there exists an $ l\in\Z_{\geq0} $ such that $ \mathcal I_1^{k+l}\cap\mathcal I_2^k=\mathcal I_1^k\left(\mathcal I_1^l\cap\mathcal I_2^k\right) $. Hence, setting $M=k+l$, in particular $ \mathcal I_1^{M}\cap\mathcal I_2^{M}\subseteq\mathcal I_1^k\mathcal I_2^k=(\mathcal I_1\mathcal I_2)^k\subseteq(\mathcal I_1\cap\mathcal I_2)^k $ holds. The induction-step is obvious.
\end{proof}
Consequently, 
\[ \mathcal I^k\rhd\calZ\left(\a_P;\Lambda,N\right)^k=\left(\bigcap_{\lambda\in\Lambda}\calZ\left(\a_P;\lambda,N\right)\right)^k\supseteq\bigcap_{\lambda\in\Lambda}\calZ\left(\a_P;\lambda,N\right)^M=\bigcap_{\lambda\in\Lambda}\calZ\left(\a_P;\lambda,MN\right)=\calZ\left(\a_P;\Lambda,MN\right). \]
We are now ready to give the 

\begin{proof}[Proof of Prop.\ \ref{prop:same_d}]
By Lem.\ \ref{lem:002}, it suffices to prove the following claim: For every $ P\in\calP $ there exists a finite set $ \Lambda_P\subseteq\check\a_{P,\C}$ with the following property: For every $\varphi\in\calA_\J^\infty(G) $ there exists $ N\in\Z_{\geq0}$ such that
\begin{equation}\label{eq:tP} 
\varphi_P(\rule{3mm}{0.15mm} \cdot g)\in\mathfrak P_{A_P^\R}(\Lambda_P,N),\qquad g\in G(\A). 
\end{equation}
First, we recall that, denoting by $ \mathcal L $ the left action of $ \U(\g) $ on $ C^\infty(G(\A)) $, we have
\[ Yf=\mathcal L(Y^\#)f,\qquad Y\in\calZ(\g),\ f\in C^\infty(G(\A)), \]
where $ (\cdot)^\# $ is the unique anti-automorphism of $ \U(\g) $ such that $ 1^\#=1 $ and $ X^\#=-X $ for all $ X\in\g_\infty $.

Now, let $ P=LN\in\mathcal P$. Then, denoting the Harish-Chandra homomorphism by $ \nu:\calZ(\g)\to\calZ(\l) $, we get $Y\in\nu(Y)+\U(\g)\n_\C$ for all $Y\in\calZ(\g)$, and hence 
\begin{equation}\label{eq:005}
(Yf)_P=Yf_P=\mathcal L(Y^\#)f_P=\mathcal L(\nu(Y^\#))f_P=\nu(Y^\#)^\#f_P
\end{equation}
for all $ Y\in\calZ(\g) $ and $ f\in C^\infty_{umg}(G(F)\backslash G(\A))  $. As $ \calZ(\l) $ is a finitely generated $ \nu(\calZ(\g)) $-module, $\J_\l:=\calZ(\l)\nu(\J^\#)^\# $ is an ideal of finite codimension in $ \calZ(\l) $. 

Let $ \varphi\in\calA_\J^\infty(G) $, and let $ k\in\N $ be such that $\J^k\varphi=0 $. For every $ g\in G(\A) $, the function $ \varphi(\rule{3mm}{0.15mm} \cdot g):G(\A)\to\C $ is also annihilated by $\J^k $, hence by \eqref{eq:005} the function $ \varphi_P(\rule{3mm}{0.15mm} \cdot g):G(\A)\to\C $ is annihilated by $ \calZ(\l)\nu((\J^k)^\#)^\#=\J_\l^k $. Since the algebra $ \calZ(\a_P) $ embeds canonically into $ \calZ(\l) $, it follows that the function $\varphi_P(\rule{3mm}{0.15mm} \cdot g):A_P^\R\to\C $ is annihilated by the following ideals in $ \calZ(\a_P) $:
\[\J_\l^k\cap\calZ(\a_P)\supseteq(\J_\l\cap\calZ(\a_P))^k\supseteq\calZ(\a_P;\Lambda_P,M) \]
for some $ M\in\N $, where $ \Lambda_P $ is chosen such that $\J_\l\cap\calZ(\a_P)\supseteq\calZ(\a_P;\Lambda_P,N)$ for some $ N\in\N $, see \S\ref{sub:008} above. But by \cite{moewal}, I.3.1,
\[ \left\{f\in C^\infty(A_P^\R):\calZ(\a_P;\Lambda_P,M)f=0\right\}\subseteq\mathfrak P_{A_P^\R}\left(\Lambda_P,M\right). \]
Therefore, 
\[ \varphi_P(\rule{3mm}{0.15mm} \cdot g)\in\mathfrak P_{A_P^\R}\left(\Lambda_P,M\right),\qquad g\in G(\A). \] 
Since $ \Lambda_P $ depends only on $\J $ and $ P $, this proves the above claim, cf.\ \eqref{eq:tP}, and thus Prop.\ \ref{prop:same_d}.	
\end{proof}

\subsubsection{The LF-space topology on $\calA_\J^\infty(G)$}\label{sect:LF-autom}
As provided by Prop.\ \ref{prop:same_d}, there exists a smallest $d=d_0\in\N$ such that $\calA^\infty_\J(G)\subset C^\infty_{umg,d}(G(F)\backslash G(\A)).$ We will henceforth fix such an exponent $d$. Recall that for each $n\in\N$, $\calA^\infty_d(G)^{K_n,\J^n}$ has been given the structure of a Fr\'echet space. Having fixed $d$, we obtain canonical continuous inclusions $\iota_n: \calA^\infty_d(G)^{K_n,\J^n}\hra \calA^\infty_d(G)^{K_{n+1},\J^{n+1}}$ with closed images and we will hence equip the space of smooth-automorphic forms $\calA_\J^\infty(G)$ with its natural LF-space topology given by 
\begin{equation}\label{eq:LF_smaut}
\calA_\J^\infty(G)=\lim_{n\ra\infty}\calA^\infty_d(G)^{K_n,\J^n}.
\end{equation}
As remarked in \S \ref{sect:LF}, this makes $\calA_\J^\infty(G)$ into a complete, barrelled, bornological LCTVS (which, as we recall, for us includes the property of being Hausdorff). It is easy to see that if $(\varphi_i)_{i\in I}$ is a convergent net in $\calA_\J^\infty(G)$ with limit $\varphi$, then the net of complex numbers $((X\varphi_i)(g))_{i\in I}$ converges in $\C$ to $(X\varphi)(g)$ for every $X\in\U(\g)$ and $g\in G(\A)$. In particular, every convergent net of smooth-automorphic forms is pointwise convergent everywhere.

\begin{rem}\label{rem:018}
Our particular choice of an exponent of growth $d$ is only made for the convenience of normalization and can be replaced by any other integer $d'\geq d$, without change of topology, as the following argument shows: Let us for now denote $\calA_\J^\infty(G)_d=\lim_{n\ra\infty}\calA^\infty_d(G)^{K_n,\J^n}$ and $\calA_\J^\infty(G)_{d+1}=\lim_{n\ra\infty}\calA^\infty_{d+1}(G)^{K_n,\J^n}$. By the very assertion of Prop.\ \ref{prop:same_d}, $\calA^\infty_d(G)^{K_n,\J^n}=\calA^\infty_{d+1}(G)^{K_n,\J^n}$ as sets for every $n\in \N$.  Estimating the seminorms in question, using only the fact that the adelic group norm $\|g\|$ is bounded away from $0$, cf.\ \eqref{eq:ungl}, one shows that the bijection given by the identity map $\calA^\infty_d(G)^{K_n,\J^n} \rightarrow \calA^\infty_{d+1}(G)^{K_n,\J^n}$ is continuous  for every $n\in \N$ and hence -- domain and target space being Fr\'echet -- a topological isomorphism by the Open Mapping Theorem. Here we note that the observation that $\calA^\infty_d(G)^{K_n,\J^n}=\calA^\infty_{d+1}(G)^{K_n,\J^n}$ as sets, which is all based on Prop.\ \ref{prop:same_d}, is crucial for the latter conclusion. Having said this, going over to the direct limits, $\calA_\J^\infty(G)_d=\calA_\J^\infty(G)_{d+1}$ also topologically. The reader may also find an analogue of this discussion (for real reductive groups and arithmetic sugroups) in \cite{li-sun}, \S3.1, there, however, without proofs.
\end{rem}

\subsection{The right regular action of $G(\A)$ on smooth-automorphic forms}\label{sect:smautrep}
The following result on $\calA_\J^\infty(G)$ is well-known and central for all our considerations later on. However, lacking a good reference and given certain topological issues and incompatibilities in the literature, cf.\ Remark-Warning \ref{rem:warning}, we prefer to give a full proof of it.

\begin{prop}\label{prop:smautorep}
Acted upon by right translation $R$, the LF-space $\calA_\J^\infty(G)$ becomes a smooth representation of $G(\A)$.
\end{prop}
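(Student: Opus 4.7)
The plan is to first establish that right translation $R$ defines a continuous $G(\A)$-action on $\calA_\J^\infty(G)$, and then to deduce smoothness by combining the smooth $G_\infty$-structure on each defining step (Proposition \ref{prop:sm_inf}) with the local constancy of the $G(\A_f)$-action on right-$K_n$-invariant vectors.

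First I would show that $R(g)$ preserves $\calA_\J^\infty(G)$ and acts as a continuous linear operator for each fixed $g \in G(\A)$. Given $\varphi \in \calA^\infty_d(G)^{K_n,\J^n}$, the function $R(g)\varphi$ is smooth and left-$G(F)$-invariant. Since $\calZ(\g)$ consists of $\mathrm{Ad}$-invariant elements of $\U(\g)$, right translations commute with the $\calZ(\g)$-action, so $\J^n$ still annihilates $R(g)\varphi$. The submultiplicativity in \eqref{eq:ungl} yields
\[ p_{d,X}(R(g)\varphi) \le (C_0\norm{g})^d\, p_{d,\mathrm{Ad}(g^{-1})X}(\varphi),\qquad X\in\U(\g), \]
so $R(g)\varphi$ still has uniform moderate growth with the \emph{same} exponent $d$. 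Choosing $m\ge n$ with $K_m\subseteq g_f^{-1}K_n g_f$ (possible since $\{K_n\}$ is a neighbourhood basis of $\id\in G(\A_f)$), we obtain $R(g)\varphi \in \calA^\infty_d(G)^{K_m,\J^m}$, and the above estimate shows that $R(g)\colon \calA^\infty_d(G)^{K_n,\J^n}\to\calA^\infty_d(G)^{K_m,\J^m}$ is continuous. The universal property of the strict inductive limit \eqref{eq:LF_smaut} then gives continuity of $R(g)$ on all of $\calA_\J^\infty(G)$.

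Next, I would verify that each orbit map $c_\varphi\colon G(\A)\to\calA_\J^\infty(G)$, $g\mapsto R(g)\varphi$, is continuous. The equivariance $c_\varphi(g_0 g) = R(g_0)\,c_\varphi(g)$ together with the continuity of $R(g_0)$ just established reduces the problem to continuity at $\id$. Fix $\varphi \in \calA^\infty_d(G)^{K_n,\J^n}$. On the open neighbourhood $G_\infty\times K_n$ of $\id\in G(\A)$, every $g=(g_\infty,g_f)$ with $g_f\in K_n$ satisfies $R(g)\varphi = R(g_\infty)\varphi$, so $c_\varphi$ restricted to this neighbourhood factors through the $G_\infty$-orbit map of $\varphi$ inside the Fr\'echet space $\calA^\infty_d(G)^{K_n,\J^n}$, which is continuous because this step is a smooth $G_\infty$-representation by Proposition \ref{prop:sm_inf}. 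Composing with the continuous inclusion into $\calA_\J^\infty(G)$ yields continuity of $c_\varphi$ at $\id$. Since $\calA_\J^\infty(G)$ is barrelled (cf.\ \S\ref{sect:LF-autom}), the criterion recorded in \S\ref{sect:genreps} upgrades this separate continuity to joint continuity, showing that $(R,\calA_\J^\infty(G))$ is a $G(\A)$-representation.

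Finally, for smoothness, Lemma \ref{prop:LFrepsm} applied to the defining sequence $V_n:=\calA^\infty_d(G)^{K_n,\J^n}$, each a smooth $G_\infty$-representation by Proposition \ref{prop:sm_inf}, shows that $\calA_\J^\infty(G)$ is a smooth $G_\infty$-representation. For the $G(\A_f)$-action, any $\varphi\in\calA_\J^\infty(G)$ lies in some $\calA^\infty_d(G)^{K_n,\J^n}$ and is therefore fixed by $K_n$, making its $G(\A_f)$-orbit map locally constant, hence smooth. The equivalence recorded in \S\ref{sect:G(A)reps} between smoothness as a $G(\A)$-representation and smoothness of both its $G_\infty$- and $G(\A_f)$-restrictions then concludes the proof. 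The main subtlety throughout is keeping the continuity estimates and step-shiftings coherent within the LF-topology; this is precisely what Proposition \ref{prop:same_d} guarantees by providing a uniform exponent $d$ across all defining steps, and without this ingredient the whole argument would not even get off the ground.
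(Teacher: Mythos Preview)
Your overall strategy matches the paper's, and the representation-theoretic part (continuity of $R(g)$ and of orbit maps, then joint continuity via barrelledness) is correct. Two small slips: in your growth estimate one should write $\mathrm{Ad}(g_\infty^{-1})X$ rather than $\mathrm{Ad}(g^{-1})X$, since only the archimedean component acts on $\U(\g)$ while $R(g_f)$ commutes with $X$; and the right $K_m$-invariance of $R(g)\varphi$ requires $K_m\subseteq g_f K_n g_f^{-1}$, not $K_m\subseteq g_f^{-1}K_n g_f$.

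There is one genuine gap, in the $G(\A_f)$-smoothness step. In this paper smoothness of a $G(\A)$-representation is the \emph{topological} identity $V=V^{\infty_\A}$ (cf.\ \S\ref{sect:G(A)reps}); once $G_\infty$-smoothness is established this amounts to $\calA_\J^\infty(G)=\lim_n \calA_\J^\infty(G)^{K_n}$ as LCTVS. Your argument that every $\varphi$ has locally constant $G(\A_f)$-orbit map only yields the set-theoretic equality $V=\bigcup_n V^{K_n}$, which is strictly weaker. What remains is to check that the identity map between the two strict inductive limits
\[
\lim_{n}\calA^\infty_d(G)^{K_n,\J^n}\ \longrightarrow\ \lim_{n}\calA_\J^\infty(G)^{K_n}
\]
is bicontinuous. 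This is exactly what the paper verifies at the end of its proof: one direction is clear since each $\calA_\J^\infty(G)^{K_n}\hookrightarrow\calA_\J^\infty(G)$ is the inclusion of a closed subspace, and for the other one observes that the continuous inclusion $\calA^\infty_d(G)^{K_n,\J^n}\hookrightarrow\calA_\J^\infty(G)$ factors through the subspace $\calA_\J^\infty(G)^{K_n}$ and hence through $\lim_m\calA_\J^\infty(G)^{K_m}$. Once this short verification is added your proof is complete.
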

\begin{proof}
We will first verify that the right regular action $R$ of $G(\A)$ defines a continuous map $G(\A)\times \calA_\J^\infty(G) \ra \calA_\J^\infty(G)$ and hence a representation of $G(\A)$: As $\calA_\J^\infty(G)$ is barrelled, it suffices to show that the latter map is separately continuous, cf.\ \S \ref{sect:rep}. 
To this end, let $g=(g_\infty,g_f)\in G(\A)$ arbitrary, but fixed, and consider the linear operator $R(g): \calA_\J^\infty(G) \ra \calA_\J^\infty(G)$. By construction of the LF-space topology on $\calA_\J^\infty(G)$ it suffices to show the continuity of the restrictions $R(g): \calA^\infty_d(G)^{K_n,\J^n} \ra \calA_\J^\infty(G)$ for each $n$, in order to obtain the desired continuity of $R(g)$, \S \ref{sect:LF}. So, let $n\in\N$ be arbitrary. Prop.\ \ref{prop:sm_inf} implies that $R((g_\infty,\id))=R_\infty(g_\infty)$ is a topological automorphism of the Fr\'echet space $\calA^\infty_d(G)^{K_n,\J^n}$. 
On the other hand, $R((\id,g_f))$ obviously defines a linear operator 
$R((\id,g_f)): \calA^\infty_d(G)^{K_n,\J^n} \ra \calA^\infty_d(G)^{g_f K_n g^{-1}_f,\J^n}$
whose image hence lies inside (any) Fr\'echet space $\calA^\infty_d(G)^{K_m,\J^m}, $ with $m\gg n$ such that $g_f K_n g^{-1}_f\supseteq K_m$ (such a $K_m$ exists, because $\{K_n\}_{n\in\N}$ defines a base of neighbourhoods of $\id$). Hence, continuity of 
$$R((\id,g_f)): \calA^\infty_d(G)^{K_n,\J^n} \ra \calA_\J^\infty(G)$$ 
follows from the fact that for every $X\in\U(\g)$ and $\varphi\in\calA_d^\infty(G)^{K_n,\J^n}$
\begin{align*}
	p_{d,X}(R((\id,g_f))\varphi)&\overset{\phantom{\eqref{eq:ungl}}}=\sup_{g\in G(\A)}|(X\varphi)(gg_f)|\norm g^{-d}\\
	&\overset{\eqref{eq:ungl}}\leq C_0^d\sup_{g\in G(\A)}|(X\varphi)(gg_f)|\norm{gg_f}^{-d}\norm{g_f}^{d}\\
	&\overset{\phantom{\eqref{eq:ungl}}}=C_0^d\norm{g_f}^d\,p_{d,X}(\varphi).
\end{align*}
In summary, $R(g)=R((\id,g_f))\circ R((g_\infty,\id))$ is a continuous map $\calA^\infty_d(G)^{K_n,\J^n} \ra \calA_\J^\infty(G)$ for each $n\in\N$, hence $R(g): \calA_\J^\infty(G) \ra \calA_\J^\infty(G)$ is continuous for all $g\in G(\A)$.

In order to prove continuity in the other variable, let $\varphi\in\calA_\J^\infty(G)$ be an arbitrary, but fixed smooth-automorphic form and consider its orbit map $c_\varphi$. Choose any $n\in\N$ such that $\varphi\in \calA^\infty_d(G)^{K_n,\J^n}$. Then, by Prop.\ \ref{prop:sm_inf}, $c_\varphi$ restricts to a smooth, and hence in particular continuous map, $c_\varphi: G_\infty \ra \calA^\infty_d(G)^{K_n,\J^n}$. As $\calA^\infty_d(G)^{K_n,\J^n}$ inherits from $\calA_\J^\infty(G)$ its original Fr\'echet topology, $c_\varphi$ also defines a continuous map $c_\varphi: G_\infty \ra \calA^\infty_\J(G)$. On the other hand, $\varphi$ being right-invariant under the open compact subgroup $K_n$, we also obtain a continuous restriction $c_\varphi: G(\A_f) \ra \calA^\infty_\J(G)$. Hence, by barrelledness of $\calA_\J^\infty(G)$ and the just shown continuity of $R((\id,g_f)): \calA^\infty_\J(G) \ra \calA_\J^\infty(G)$ the map $G(\A_f)\times \calA_\J^\infty(G) \ra \calA_\J^\infty(G)$, $(g_f,\phi)\mapsto R(g_f)\phi$ is jointly continuous. Therefore, $c_\varphi$, factoring as
\begin{align*}
		G(\A) 	& \stackrel{\sim}{\longrightarrow} 	& G_\infty \times G(\A_f) 	& \longrightarrow 	& \calA^\infty_\J(G) \times G(\A_f) 	& \longrightarrow 	& \calA^\infty_\J(G) &\\
		g 		& \longmapsto 						& (g_\infty, g_f) 			& \longmapsto 		& (c_\varphi(g_\infty), g_f) 		& \longmapsto		& R(g_f) c_\varphi(g_\infty)= & c_\varphi(g)
	\end{align*}
is continuous as it is a composition of continuous maps. 

Hence, $G(\A)\times \calA_\J^\infty(G) \ra \calA_\J^\infty(G)$, $(g,\varphi)\mapsto R(g)\varphi=\varphi(\rule{3mm}{0.15mm} \cdot g)$ is separately continuous and hence, by barrelledness of $\calA^\infty_\J(G)$, also jointly continuous. \\\\
We now prove smoothness. Combining Lem.\ \ref{prop:LFrepsm} and Prop.\ \ref{prop:sm_inf}, right translation on the LF-space $\calA_\J^\infty(G)$ is a smooth representation of $G_\infty$. To prove that it is also a smooth representation of $ G(\A_f) $, i.e., that topologically $\calA_\J^\infty(G)=\lim_{n\to\infty}\calA_\J^\infty(G)^{K_n}$, we need to argue that that the identity map
 $$\calA_\J^\infty(G)=\lim_{n\to\infty}\calA_d^\infty(G)^{K_n,\J^n} \overset{id}\longrightarrow \lim_{n\to\infty}\calA_\J^\infty(G)^{K_n}$$
is bicontinuous: By definition of the strict inductive limit topology, this is equivalent to that the restrictions
\begin{equation}\label{eq:incl2}
\calA_d^\infty(G)^{K_n,\J^n}\hookrightarrow\lim_{m\to\infty}\calA_\J^\infty(G)^{K_m} \quad\text{and}\quad \calA_\J^\infty(G)^{K_n}\hookrightarrow\calA_\J^\infty(G) 
\end{equation}
to the limit-steps are continuous for all $ n\in\N $. The second inclusion is continuous by construction, as is the map $\calA_d^\infty(G)^{K_n,\J^n}\hookrightarrow\calA_\J^\infty(G)$. However, as the image of the latter map lands inside $\calA_\J^\infty(G)^{K_n}\subset \calA_\J^\infty(G)$ and again, $\calA_\J^\infty(G)^{K_n}\hookrightarrow\lim_{m\to\infty}\calA_\J^\infty(G)^{K_m}$ is continuous by construction, also the first map in \eqref{eq:incl2} is continuous.
\end{proof}

\begin{rem}\label{rem:phicont}
We recall that any smooth function $f:G(\A)\ra\C$ is automatically continuous. In fact, for smooth-automorphic forms this also follows from the the fact, shown in the proof of Prop.\ \ref{prop:smautorep}, that the orbit maps $c_\varphi: G(\A)\ra \calA_\J^\infty(G)$ are all continuous, as $\varphi = ev_\id\circ c_\varphi$.
\end{rem}

\subsection{(Relation to) $ K_\infty $-finite automorphic forms}

Obviously, the space of (usual) automorphic forms $\calA_\J(G)$, cf.\ \cite{bojac}, \S 4.2, which are annihilated by a power of the fixed ideal $\J$, identifies as the dense subspace of right-$K_\infty$-finite vectors in $\calA_\J^\infty(G)$\footnote{Reading the definition in \cite{bojac}, \S 4.2 very carefully, in order to render the identity $\calA_\J(G)=\calA_\J^\infty(G)_{(K_\infty)}$ really obvious, one should recall Rem.\ \ref{rem:phicont} from above.}. It is very well known that $\calA_\J(G)$ is a $(\g_\infty,K_\infty,G(\A_f))$-module (but it also follows directly from Prop.\ \ref{prop:smautorep} above, as $\calA_\J(G)=\calA_\J^\infty(G)_{(K_\infty)}=\calA_\J^\infty(G)^{\infty_\A}_{(K_\infty)}$). Although dense in $\calA^\infty_\J(G)$, and hence, topologically ``almost all'' of $\calA^\infty_\J(G)$, the space $\calA_\J(G)$ is much smaller than $\calA^\infty_\J(G)$ from the perspective of vector spaces: Indeed, $\calA_\J(G)$ is of countable dimension by a theorem of Harish-Chandra, cf.\ \cite{hch}, Thm.\ 1 (see also \cite{bojac}, Thm.\ 1.7 and \S 4.3.(i) therein), whereas  $\calA^\infty_\J(G)$ is of uncountable dimension (since it contains infinite-dimensional Fr\'echet spaces $ \calA^\infty_d(G)^{K_n,\J^n} $).

\section{Smooth-automorphic representations}\label{sect:smautreps}

\subsection{Smooth-automorphic representations}
In this paper we propagate the idea to give preference to $\calA^\infty_\J(G)$, rather than to $\calA_\J(G)$, as the former allows a representation of $G(\A)$ and not only the structure of a $(\g_\infty,K_\infty,G(\A_f))$-module (which breaks the symmetry between the role of the archimedean and the non-archimedean factors of $G(\A)$). Moreover, to round this up by a subtlety (which may suit the taste of purists among the readers), the space of functions $\calA_\J(G)$ changes with the very choice of $K_\infty$, whereas $\calA^\infty_\J(G)$ is independent of any particular additional choices: Once $\J$ is given, $\calA^\infty_\J(G)$ depends on nothing else than the group-scheme $G/F$ itself (which we believe is a much more satisfactory setup). \\\\
In order to substantiate this approach, we will show here that the representation theory evolving out of $\calA^\infty_\J(G)$ is rich enough in order to recover the representation-theoretical phenomena in automorphic forms.\\\\ 
Indeed, as a first step and as some sort of ground-work, in this section we shall prove an ``automorphic analogue'' of a famous result of Harish-Chandra (on admissible $G_\infty$-representations and their underlying $(\g_\infty,K_\infty)$-modules); moreover, we will establish a natural 1-to-1 correspondence between the irreducible smooth-automorphic representations of $G(\A)$ (i.e., irreducible $G(\A)$-subquotients of $\calA^\infty_\J(G)$) and the usual irreducible automorphic representations (i.e., irreducible $(\g_\infty,K_\infty,G(\A_f))$-module subquotients of $\calA_\J(G)$); finally, we will also verify the fundamental and all-important local-global property of irreducible smooth-automorphic representations, provided by a topologized version of the restricted tensor product theorem.\\\\
In analogy with the classical definition of automorphic representations, we introduce the following

\begin{defn}\label{defn:001}
	A $ G(\A) $-representation $ (\pi,V) $ is a \textit{smooth-automorphic representation} if it is equivalent to a quotient $ U/W $, where $ W\subseteq U $ are $G(\A)$-subrepresentations of $ \calA^\infty_\J(G) $ (for some ideal $\J$ of $\calZ(\g)$ of finite codimension). Moreover, if $ W=0 $, we say that $ (\pi,V) $ is a \textit{smooth-automorphic subrepresentation}.
\end{defn}
We shortly observe that by Lem.\ \ref{lem:subsquots} and Prop.\ \ref{prop:smautorep} the word ``smooth'' in our terminus ``smooth-automorphic representation'' does not amount to an abuse of terminology as every smooth-automorphic representation is indeed a smooth $ G(\A) $-representation in the sense of \S\ref{sect:G(A)reps}.

\begin{rem}\label{rem:001}
	By definition, the notion of a smooth-automorphic representation entails the assumption that the quotient $U/W$ is complete, as it is supposed to be a representation of $ G(\A) $. We warn the reader that in general, the quotient of a complete LCTVS by a closed subspace does not need to be complete, nor is it automatic that the quotient of two $G(\A)$-representations is again a representation, cf.\ Rem.\ \ref{rmk:qtreps}. It is our conjecture, however, that for all $ G(\A) $-subrepresentations $ W\subseteq U \subseteq \calA^\infty_\J(G) $, the quotient $ U/W $ is complete and defines a smooth-automorphic representation. Let us point out that according to Lem.\ \ref{lem:subsquots}, this is certainly the case, if each quotient $U^{K_n}/W^{K_n}$ is complete and barrelled. It follows that $U/W$ is in particular complete, if there exists an $m\in\N$ such that $\J^m U=0$, as then $U^{K_n}$ is a closed subspace of the Fr\'echet space $\calA^\infty_d(G)^{K_n,\J^n}$ for all $n\geq m$ and hence Fr\'echet itself. Since the actions of $\mathcal Z(\g)$ and $G(\A)$ commute, the latter applies in particular, if $U$ is finitely generated as a $G(\A)$-representation. 
\end{rem}

The following lemma gives a first glimpse into the relationship between the classical automorphic representations and smooth-automorphic representations.

\begin{lem}\label{lem:008}
	Let $ V_0 $ be a $ (\g_\infty,K_\infty,G(\A_f)) $-submodule of $ \calA_\J(G) $. Then, the topological closure of $V_0$ in $\calA_\J^\infty(G)$, $ V:=\Cl_{\calA_\J^\infty(G)}(V_0) $, is a smooth-automorphic subrepresentation.
\end{lem}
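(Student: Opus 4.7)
The plan is to show that $V:=\Cl_{\calA_\J^\infty(G)}(V_0)$ is a closed, $G(\A)$-invariant subspace of $\calA_\J^\infty(G)$. Once this is done, the jointly continuous action from Prop.\ \ref{prop:smautorep} restricts to a jointly continuous action on $V$, and $V$ is complete as a closed subspace of the complete LCTVS $\calA_\J^\infty(G)$; so $V$ is a $G(\A)$-subrepresentation of $\calA_\J^\infty(G)$, i.e.\ a smooth-automorphic subrepresentation in the sense of Def.\ \ref{defn:001} with $U=V$, $W=0$.

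For the easy directions: for each $g_f\in G(\A_f)$, the operator $R(g_f)$ is a continuous endomorphism of $\calA_\J^\infty(G)$ by Prop.\ \ref{prop:smautorep} and stabilises $V_0$ by hypothesis, hence also $V=\Cl(V_0)$; the same argument applies to $R(k_\infty)$ for $k_\infty\in K_\infty$. The main obstacle is to promote invariance under $\g_\infty$ and $K_\infty$ to invariance under the whole Lie group $G_\infty$.

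To handle this, fix $v\in V_0$ and $g_\infty\in G_\infty$. Since $V_0\subseteq \calA_\J(G)=\calA_\J^\infty(G)_{(K_\infty)}$, the vector $v$ is $K_\infty$-finite and $\calZ(\g)$-finite; using Prop.\ \ref{prop:same_d} and the LF-description \eqref{eq:LF_smaut}, choose $n\in\N$ large enough so that $v\in \calA^\infty_d(G)^{K_n,\J^n}$. Let $M$ be the (finitely generated) $(\g_\infty,K_\infty)$-submodule of $V_0^{K_n}$ generated by $v$. Then $M$ lies in the Harish-Chandra module underlying the Casselman-Wallach representation $\calA^\infty_d(G)^{K_n,\J^n}$ of $G_\infty$ (Prop.\ \ref{prop:sm_inf}). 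By the Casselman-Wallach theorem (invoked in the same way as in the proof of Lem.\ \ref{lem:cass_wall_reps}\eqref{lem:cass_wall_reps:2}), the closure $\overline{M}$ of $M$ inside the Fr\'echet space $\calA^\infty_d(G)^{K_n,\J^n}$ is a $G_\infty$-invariant Casselman-Wallach subrepresentation with underlying Harish-Chandra module $M$. In particular $R(g_\infty)v\in \overline{M}$. Since $\calA^\infty_d(G)^{K_n,\J^n}$ is a closed step of the LF-space $\calA_\J^\infty(G)$ carrying the induced topology, $\overline{M}=\Cl_{\calA_\J^\infty(G)}(M)\subseteq V$, so $R(g_\infty)v\in V$.

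By continuity of $R(g_\infty)$ on $\calA_\J^\infty(G)$ and density of $V_0$ in $V$, this upgrades to $R(g_\infty)V\subseteq V$. Combined with $G(\A_f)$-invariance and the factorisation $G(\A)=G_\infty\cdot G(\A_f)$, it yields full $G(\A)$-invariance of $V$, concluding the argument. The sole non-routine step is the use of the Casselman-Wallach theorem at each finite level $\calA^\infty_d(G)^{K_n,\J^n}$; without it one has no mechanism to integrate a $(\g_\infty,K_\infty)$-module closure to a $G_\infty$-stable subspace.
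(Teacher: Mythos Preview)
Your argument is correct, but it follows a genuinely different route from the paper's own proof. The paper argues by contradiction: writing $U:=\Cl_{\calA_\J^\infty(G)}(\mathrm{span}_\C R(G(\A))V_0)$, it assumes $U\supsetneq V$, picks a nonzero continuous functional $b$ on $U$ vanishing on $V$ via Hahn--Banach, and studies the matrix coefficient $\varphi_\phi(g):=b(R(g)\phi)$ for $\phi\in V_0$. Since $\varphi_\phi$ is $\calZ(\g)$-finite and $K_\infty$-finite, its restriction to $G_\infty^\circ$ is real analytic; as all its derivatives at the identity vanish (because $R(X)R(g_f)\phi\in V_0\subseteq V=\ker b$), it is identically zero on $G_\infty^\circ\times G(\A_f)$, and then $K_\infty$-finiteness pushes this to all of $G(\A)$, forcing $b\equiv 0$.

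Your approach instead invokes the Harish--Chandra subrepresentation correspondence at each Fr\'echet step $\calA^\infty_d(G)^{K_n,\J^n}$: the closure of a $(\g_\infty,K_\infty)$-submodule inside an admissible smooth Fr\'echet representation is automatically $G_\infty$-stable. This is exactly the mechanism the paper itself uses later (see the proofs of Lem.~\ref{lem:021} and Prop.~\ref{prop:108}, where \cite{varada}, Thm.~II.7.14 is cited for the same purpose), so there is no circularity and the machinery is already available once Prop.~\ref{prop:sm_inf} is in hand. The paper's argument is more elementary and self-contained, relying only on analyticity of $K_\infty$-finite, $\calZ(\g)$-finite vectors rather than on the full Casselman--Wallach/Harish--Chandra package; your argument is shorter and more structural but front-loads that dependence. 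A small terminological point: what you actually need is the Harish--Chandra correspondence between $(\g_\infty,K_\infty)$-submodules and closed $G_\infty$-subrepresentations in an admissible representation, not the uniqueness-of-completion statement that is usually called ``the Casselman--Wallach theorem''; citing \cite{varada}, Thm.~II.7.14 (as the paper does later) would be more precise.
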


\begin{proof}
	We need to prove that $ V $ is $ G(\A) $-invariant. Let $ U $ be the smallest closed $ G(\A) $-invariant subspace of $ \calA_\J^\infty(G) $ containing $ V_0 $:
	\begin{equation}\label{eq:007}
	U=\Cl_{\calA_\J^\infty(G)}(\mathrm{span}_\C R(G(\A))V_0).
	\end{equation}
	Obviously, $ U\supseteq V $. So, suppose that $ U\supsetneqq V $. Then, by the Hahn-Banach theorem there exists a non-zero continuous linear functional $ b:U\to\C $ such that $ b\big|_V=0 $. For every $\phi\in V_0 $, let us look at the function $ \varphi_\phi\in C^\infty(G(\A)) $,
	\[ \varphi_\phi(g):=b(R(g)\phi),\qquad g\in G(\A). \]
	For every $ X\in\g_\infty $, we have
	\[ (X\varphi_\phi)(g)=\frac d{dt}\Big|_{t=0}b(R(g\exp(tX))\phi)=b(R(g)R(X)\phi)=\varphi_{X\phi}(g),\qquad g\in G(\A). \]
	It follows that
	\begin{equation}\label{eq:004}
	(X\varphi_\phi)(g)=\varphi_{X\phi}(g)=b(R(g)R(X)\phi),\qquad X\in\U(\g), \ g\in G(\A). 
	\end{equation}
	In particular, for $ n\in\N $ such that $\J^n \phi=0 $, we have $ \J^n\varphi_\phi=0 $. Note also that $ \varphi_\phi $ is $ K_\A $-finite on the right. Denoting by $ G_\infty^\circ $ the identity component of $ G_\infty $, it follows that for every $g_f\in G(\A_f) $, the function $ \varphi_\phi(\rule{3mm}{0.15mm} \cdot g_f): G_\infty^\circ\to\C $ is smooth, $ K_\infty $-finite on the right and $ \calZ(\g) $-finite, hence it is real analytic (see, e.g., \cite{borel}, 3.15). Moreover, we have
	\[ X(\varphi_\phi(\rule{3mm}{0.15mm} \cdot g_f))(\id_{G_\infty})=(X\varphi_\phi)(g_f)\overset{\eqref{eq:004}}=b(\underbrace{R(g_f)R(X)\phi}_{\in V})=0,\qquad X\in\U(\g), \]
	hence $\varphi_\phi(g_\infty g_f)=0$, for all $g_\infty\in G_\infty^\circ$ and $g_f\in G(\A_f)$.
	This means that $ b $ vanishes on
	\begin{equation}\label{eq:006}
	\begin{aligned}
	\Cl_{\calA_\J^\infty(G)}\left(\mathrm{span}_\C R(G_\infty^\circ\times G(\A_f))V_0\right)
	&=\Cl_{\calA_\J^\infty(G)}\left(\mathrm{span}_\C R(G_\infty^\circ\times G(\A_f))R(K_\infty)V_0\right)\\
	&=\Cl_{\calA_\J^\infty(G)}\left(\mathrm{span}_\C R(G(\A))V_0\right)
	\overset{\eqref{eq:007}}=U,
	\end{aligned}
	\end{equation}
	where the second equality holds because $ K_\infty $ meets every connected component of $ G_\infty $.	Thus, $ b $ is identically zero, which is a contradiction. Hence, $V=U$ and so $V$ is $G(\A)$-stable.
\end{proof}

\subsection{The general dictionary I: Admissibility and an automorphic variant of a theorem of Harish-Chandra}

The following result, see Thm.\ \ref{thm:005} below, provides a fundamental dictionary between smooth-automorphic subrepresentations of $\calA_\J^\infty(G)$ and automorphic subrepresentations of $ \calA_\J(G)$ (where the expression ``automorphic subrepresentation'' is used in the usual way, i.e., denoting a $ (\g_\infty,K_\infty,G(\A_f)) $-submodule): We show that the irreducible (and, more generally, even all the admissible) $G(\A)$-subrepresentations of $ \calA_\J^\infty(G) $ are {\it exactly} the topological closures of the irreducible (resp., admissible) automorphic subrepresentations of $ \calA_\J(G) $ within $ \calA_\J^\infty(G) $. We invite the reader to view this result of ours as a {\it global}, or {\it automorphic} analogue of Harish-Chandra's result, providing a 1:1-correspondence between the $G_\infty$-subrepresentations of a given admissible $G_\infty$-representation $V$ and the $(\g_\infty,K_\infty)$-submodules of its underlying $(\g_\infty,K_\infty)$-module, cf.\ \cite{hch53} or \cite{varada}, Thm.\ II.7.14.

\begin{thm}\label{thm:005}
The admissible smooth-automorphic subrepresentations stand in one-one correspondence with the admissible $ (\g_\infty,K_\infty,G(\A_f)) $-submodules $ V_0 $ of $ \calA_\J(G) $, the correspondence $ V\leftrightarrow V_0 $ being
	\[ V_0=V_{(K_\infty)}\qquad\text{and}\qquad V=\Cl_{\calA^\infty_\J(G)}(V_0). \]
Moreover, the above correspondence respects irreducibility. Hence, every admissible (resp., irreducible) automorphic subrepresentation of $\calA_\J(G)$ lifts to an admissible (resp., irreducible) smooth-automorphic subrepresentation, recovering the original automorphic representation as its space of $K_\infty$-finite vectors. 
\end{thm}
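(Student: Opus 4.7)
The plan is to set up a back-and-forth correspondence and then transfer irreducibility. First, to see $V \mapsto V_{(K_\infty)}$ is well-defined: since $V \subseteq \calA^\infty_\J(G)$ is a subrepresentation of a smooth $G(\A)$-representation, it is itself smooth by Lem.\ \ref{lem:subsquots}, so by the standard functor from \S\ref{sect:G(A)reps}, $V_{(K_\infty)} = V^{\infty_\A}_{(K_\infty)}$ carries the structure of an admissible $(\g_\infty,K_\infty,G(\A_f))$-module; moreover, $V_{(K_\infty)} \subseteq \calA_\J(G)$ is automatic, as $K_\infty$-finite smooth-automorphic forms are precisely the classical automorphic forms. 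Conversely, given an admissible $(\g_\infty,K_\infty,G(\A_f))$-submodule $V_0 \subseteq \calA_\J(G)$, Lem.\ \ref{lem:008} ensures that $V := \Cl_{\calA^\infty_\J(G)}(V_0)$ is a smooth-automorphic subrepresentation, smooth as a $G(\A)$-representation again by Lem.\ \ref{lem:subsquots}. Since $V_0 \subseteq V_{(K_\infty)}$ is by construction a dense admissible submodule of $V$, Lem.\ \ref{lem:dense_subm} simultaneously yields $V_{(K_\infty)} = V_0$ and admissibility of $V$.

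The composite $V_0 \mapsto V \mapsto V_{(K_\infty)} = V_0$ is thus immediate. For the other composite $V \mapsto V_{(K_\infty)} \mapsto \Cl(V_{(K_\infty)})$, I must show that $V_{(K_\infty)}$ is dense in $V$. I would proceed level-wise: by smoothness of $V$ as a $G(\A_f)$-representation, $V = \bigcup_n V^{K_n}$ in the induced topology. Given $\varphi \in V^{K_n}$, there exists $m\geq n$ with $\varphi \in \calA^\infty_d(G)^{K_n,\J^m}$; the closed $G_\infty$-cyclic subrepresentation $\langle\varphi\rangle$ sits inside the Fr\'echet Casselman-Wallach representation $\calA^\infty_d(G)^{K_n,\J^m}$ (Prop.\ \ref{prop:sm_inf}), and is therefore itself Casselman-Wallach by Lem.\ \ref{lem:cass_wall_reps}(1). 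Harish-Chandra's classical density theorem (an intrinsic part of the Casselman-Wallach package) then places $K_\infty$-finite vectors densely in $\langle\varphi\rangle$, and such vectors lie in $V_{(K_\infty)}$. Since convergence in the Fr\'echet step $\calA^\infty_d(G)^{K_n,\J^m}$ forces convergence in the ambient LF-space $\calA^\infty_\J(G)$, one obtains $\varphi \in \Cl_{\calA^\infty_\J(G)}(V_{(K_\infty)})$, whence $V = \Cl(V_{(K_\infty)})$ by closedness of $V$.

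Finally, irreducibility transfers by a standard back-and-forth contradiction. If $V$ is irreducible and $0 \ne W_0 \subsetneq V_0$ is a proper $(\g_\infty,K_\infty,G(\A_f))$-submodule, then $W_0$ inherits admissibility from $V_0$ (since $\Hom_{K_\A}(W, W_0) \hookrightarrow \Hom_{K_\A}(W, V_0)$ for every irrep $W$ of $K_\A$), so $\Cl(W_0)$ is a proper nontrivial closed $G(\A)$-subrepresentation of $V$: nonzero as it contains $W_0$, and proper because $\Cl(W_0)_{(K_\infty)} = W_0 \ne V_0 = V_{(K_\infty)}$ by what was shown above, contradicting the irreducibility of $V$. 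Conversely, if $V_0$ is irreducible and $0 \ne W \subsetneq V$ is a proper $G(\A)$-subrepresentation, then $W$ is smooth (Lem.\ \ref{lem:subsquots}) and admissible (as a subrepresentation of an admissible representation), so $W_{(K_\infty)}$ is nonzero (by density) and strictly contained in $V_0$ (else $W \supseteq \Cl(V_{(K_\infty)}) = V$), contradicting the irreducibility of $V_0$. The main obstacle I anticipate is the density step underpinning bijectivity: as warned in the introduction, the subspace topology on $V \subseteq \calA^\infty_\J(G)$ need not itself be an LF-space topology, so Harish-Chandra's density cannot be applied to $V$ directly, but must be routed through the Fr\'echet Casselman-Wallach pieces $\calA^\infty_d(G)^{K_n,\J^m}$ and transferred back.
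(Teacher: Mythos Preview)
Your proof is correct and follows essentially the same architecture as the paper's: establish the two maps $V\mapsto V_{(K_\infty)}$ and $V_0\mapsto\Cl(V_0)$ via Lem.\ \ref{lem:008}, Lem.\ \ref{lem:subsquots}, and Lem.\ \ref{lem:dense_subm}, then transfer irreducibility by the obvious back-and-forth.

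The one place where you diverge is the density step $V=\Cl_{\calA^\infty_\J(G)}(V_{(K_\infty)})$. The paper dispatches this in one line by citing \cite{hch66}, Lem.\ 4, which asserts that for any continuous action of a compact group on a complete LCTVS the $K$-finite vectors are dense; since $V$ is closed in $\calA^\infty_\J(G)$ it is complete, and $K_\infty$ acts continuously, so the lemma applies to $V$ directly. Your more elaborate route through the Fr\'echet Casselman-Wallach pieces $\calA^\infty_d(G)^{K_n,\J^m}$ is correct but unnecessary: the obstacle you anticipate (that $V$ with its subspace topology need not be LF) is genuine, but Harish-Chandra's density lemma needs only completeness and a continuous compact-group action, not an LF or Fr\'echet structure. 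So your detour works, but the paper's shortcut is available. One small omission: the paper explicitly records (via \cite{bojac}, Prop.\ 4.5.(4)) that every irreducible $(\g_\infty,K_\infty,G(\A_f))$-submodule of $\calA_\J(G)$ is automatically admissible, and that every irreducible smooth-automorphic subrepresentation is therefore admissible too; this is what makes the ``respects irreducibility'' claim a genuine bijection on irreducibles rather than merely on admissible irreducibles. Your argument implicitly uses admissibility of $V_0$ when you invoke $\Cl(W_0)_{(K_\infty)}=W_0$, so you should note this input.
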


\begin{proof} Firstly, we recall that by \cite{bojac}, Prop.\ 4.5.(4) every irreducible automorphic subrepresentation $V_0$ is automatically admissible. Hence, if we manage to show that the underlying $ (\g_\infty,K_\infty,G(\A_f)) $-module of an irreducible smooth-automorphic subrepresentation $V$ of $\calA^\infty_\J(G)$ remains irreducible, then -- $V$ being a smooth $G(\A)$-representation by Lem.\ \ref{lem:subsquots} and Prop.\ \ref{prop:smautorep}, as already observed above -- $V$ is admissible, too. With this observation in mind, it therefore suffices to prove the following four claims:
	\begin{enumerate}
		\item\label{enum:001:1} Let $ V_0 $ be an admissible $ (\g_\infty,K_\infty,G(\A_f)) $-submodule of $ \calA_\J(G) $. Then, $ V:=\Cl_{\calA^\infty_\J(G)}(V_0) $ is an admissible $ G(\A) $-representation, and we have $ V_{(K_\infty)}=V_0 $.
		\item\label{enum:001:2} Let $ V $ be an admissible smooth-automorphic subrepresentation. Then, $ V_0:=V_{(K_\infty)} $ is an admissible  $ (\g_\infty,K_\infty,G(\A_f)) $-module, and we have $ \Cl_{\calA^\infty_\J(G)}(V_0)=V $.		
		\item\label{enum:001:3} Let $ V_0 $ be an irreducible $ (\g_\infty,K_\infty,G(\A_f)) $-submodule of $ \calA_\J(G) $. Then, $ V:=\Cl_{\calA^\infty_\J(G)}(V_0) $ is an irreducible $ G(\A) $-representation.
		\item\label{enum:001:4} Let $ V $ be an irreducible smooth-automorphic subrepresentation. Then, the $ (\g_\infty,K_\infty,G(\A_f)) $-module $ V_{(K_\infty)} $ is irreducible.
	\end{enumerate}

Ad \eqref{enum:001:1}: By Lem.\ \ref{lem:008}, $ V $ is a $ G(\A) $-subrepresentation of $\calA^\infty_\J(G)$, and hence, smooth by Lem.\ \ref{lem:subsquots} and Prop.\ \ref{prop:smautorep}. It is obvious from the construction that $V_0$ is a $ (\g_\infty,K_\infty,G(\A_f)) $-submodule of $V_{(K_\infty)}$ and dense in $V$. As $V_0$ is furthermore assumed to be admissible, Lem.\ \ref{lem:dense_subm} implies that $ V_{(K_\infty)}=V_0 $ and that $V$ is admissible as a $G(\A)$-representation.

Ad \eqref{enum:001:2}: The first part of the claim holds by definition, and the second one by \cite{hch66}, Lem.\ 4.  
	
Ad \eqref{enum:001:3}: As we observed above, $V_0$ is admissible, so by \eqref{enum:001:1}, $ V $ is a $ G(\A) $-subrepresentation of $\calA^\infty_\J(G)$ and $ V_{(K_\infty)}=V_0 $. Let $ U\neq0 $ be a closed $ G(\A) $-invariant subspace of $ V $. Since $ U_{(K_\infty)} $ is dense in $ U $ (again, see \cite{hch66}, Lem.\ 4), $ U_{(K_\infty)}\neq0 $, which by irreducibility of $ V_0 $ implies that $ U_{(K_\infty)}=V_0 $, hence $ U=\Cl_{V}(V_0)=V $.
	
Ad \eqref{enum:001:4}: Since $ V_{(K_\infty)} $ is dense in $ V $ (\cite{hch66}, Lem.\ 4), $ V_{(K_\infty)}\neq0 $. Thus, it suffices to prove that for every $ \phi\in V_{(K_\infty)}\setminus\{0\} $, we have
	\[ V_{0,\phi}:=\left<\phi\right>_{(\g_\infty,K_\infty,G(\A_f))}=V_{(K_\infty)}. \]
Combining Lemma \ref{lem:008} and the irreducibility of $V$, we get $\Cl_{\calA^\infty_\J(G)} (V_{0,\phi}) =V$. Thus, by applying \eqref{enum:001:1} to $ V_{0,\phi} $ (which, being spanned by one single automorphic form, is admissible by \cite{bojac}, Prop.\ 4.5.(4)), we get that $ V_{(K_\infty)}=V_{0,\phi} $.
\end{proof}




\subsection{The general dictionary II: Extension to all irreducibles and smooth-automorphic Casselman-Wallach representations}
Thm.\ \ref{thm:005} provides (in particular) a dictionary between the irreducible smooth-automorphic subrepresentations of $\calA_\J^\infty(G)$ and the irreducible automorphic subrepresentations of $ \calA_\J(G)$. For a completely general understanding of the internal representation theory of the space of smooth-automorphic forms it is essential, however, to extend this comparison from irreducible subrepresentations to all irreducible subquotients, as only those will capture the representation-theoretical phenomena of  $\calA_\J^\infty(G)$ in sufficient generality. This section is devoted to such a general comparison of irreducibles.\\\\
We begin our analysis by studying smooth-automorphic Casselman-Wallach representations of $ G(\A) $ (see \S\ref{subsec:cass_wall}), examples of which we describe in Prop.\ \ref{prop:smmf} and its fundamental corollary, Cor.\ \ref{cor:fg} (representations, spanned by one smooth-automorphic form $\varphi$), below:

\begin{prop}\label{prop:smmf}
	Let $(\pi,V)$ be a smooth-automorphic subrepresentation that is annihilated by a power of $ \J $ (e.g., an irreducible smooth-automorphic subrepresentation). Then, $V$ is a Casselman-Wallach representation of $ G(\A) $. In particular, $V$ is an LF-space.

\end{prop}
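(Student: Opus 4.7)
The plan is to verify that $V$ satisfies the definition of a Casselman-Wallach representation of $G(\A)$: once done, the LF-structure on $V$ follows at once from the paragraph preceding Lem.\ \ref{lem:cass_wall_reps} in \S\ref{subsec:cass_wall}. This detour through the Casselman-Wallach property is essential because, as recalled in \S\ref{sect:LF}, a closed subspace of an LF-space need not itself be an LF-space in general, so one cannot simply restrict the LF-structure on $\calA^\infty_\J(G)$ to $V$. The smoothness of $V$ as a $G(\A)$-representation comes for free from Lem.\ \ref{lem:subsquots} combined with Prop.\ \ref{prop:smautorep}, so the substance of the argument is to show Casselman-Wallach on each $K_n$-fixed subspace.

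Fix $m\in\N$ with $\J^m V=0$, and consider first any $n\geq m$. Since $\J^n\subseteq\J^m$, every element of $V^{K_n}$ is right-$K_n$-invariant and annihilated by $\J^n$, whence $V^{K_n}\subseteq\calA^\infty_d(G)^{K_n,\J^n}$. By the strict inductive limit property recalled in \S\ref{sect:LF}, the step $\calA^\infty_d(G)^{K_n,\J^n}$ inherits from $\calA^\infty_\J(G)$ its original Fr\'echet topology; combined with the closedness of $V$ in $\calA^\infty_\J(G)$, this shows that $V^{K_n}=V\cap\calA^\infty_d(G)^{K_n,\J^n}$ is a closed subspace of the Fr\'echet space $\calA^\infty_d(G)^{K_n,\J^n}$, and its subspace topology from $V$ coincides with the resulting Fr\'echet topology. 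Moreover, $V^{K_n}$ is $G_\infty$-stable since $G_\infty$ commutes with $K_n\subseteq G(\A_f)$. Invoking Prop.\ \ref{prop:sm_inf} to the effect that $\calA^\infty_d(G)^{K_n,\J^n}$ is a Casselman-Wallach representation of $G_\infty$, Lem.\ \ref{lem:cass_wall_reps}(1) then yields that $V^{K_n}$ is a Casselman-Wallach representation of $G_\infty$ for every $n\geq m$.

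For $n<m$, I observe that $V^{K_n}\subseteq V^{K_m}$ is a closed $G_\infty$-invariant subspace (because $K_n\supseteq K_m$ makes $K_n$-invariance the stronger condition), so a further application of Lem.\ \ref{lem:cass_wall_reps}(1) to the Casselman-Wallach representation $V^{K_m}$ just constructed takes care of this range as well. With the Casselman-Wallach property on $V^{K_n}$ established for every $n\in\N$, $V$ qualifies as a Casselman-Wallach representation of $G(\A)$ by definition, whence $V$ is an LF-space. The only nontrivial functional-analytic point throughout is the matching of the two a priori distinct topologies on $V^{K_n}$---the subspace topology inherited from $V\subseteq\calA^\infty_\J(G)$ and the Fr\'echet topology inherited from the step $\calA^\infty_d(G)^{K_n,\J^n}$---and this is precisely where the defining property of the LF-space $\calA^\infty_\J(G)$ (that each step carries its own original Fr\'echet topology) enters in a crucial way.
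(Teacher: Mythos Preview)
Your main argument is correct and follows essentially the same route as the paper. The paper treats all $n$ at once by embedding $V^{K_n}$ into $\calA^\infty_d(G)^{K_{m'},\J^{m'}}$ with $m':=\max(k,n)$, whereas you split into the two cases $n\geq m$ and $n<m$; both are fine and amount to the same idea. One small imprecision: Lem.\ \ref{lem:cass_wall_reps}\eqref{lem:cass_wall_reps:1} is stated for Casselman--Wallach representations of $G(\A)$, not of $G_\infty$. What you actually need (and what the paper cites directly) is the $G_\infty$-statement, namely \cite{wallachII}, Lem.\ 11.5.2, which is precisely the ingredient used in the proof of Lem.\ \ref{lem:cass_wall_reps}\eqref{lem:cass_wall_reps:1}.

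There is one genuine omission: you do not justify the parenthetical ``e.g.'' in the statement, i.e., that every \emph{irreducible} smooth-automorphic subrepresentation is annihilated by a power of $\J$. The paper supplies this short argument: pick any nonzero $\varphi\in V$; by irreducibility and continuity of $R$, $V=\Cl_{\calA_\J^\infty(G)}(\mathrm{span}_\C R(G(\A))\varphi)$, and since the action of $\U(\g)$ (hence of $\J$) is continuous and commutes with $R(G(\A))$, any $k$ with $\J^k\varphi=0$ gives $\J^kV=0$.
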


\begin{proof}
	Let $ k\in\N $ such that $ \J^kV=0 $. For every $ n\in\N $, $V^{K_n}$ is a $G_\infty$-subrepresentation of $\calA^\infty_d(G)^{K_m,\J^m}$, where $ m:=\max(k,n) $. Hence, combining Prop.\ \ref{prop:sm_inf} with \cite{wallachII}, Lem.\ 11.5.2,  $V^{K_n}$ inherits from $\calA^\infty_d(G)^{K_m,\J^m}$ the structure of a Casselman-Wallach representation of $ G_\infty $. As $V$ is furthermore a smooth $G(\A)$-representation, cf.\ Lem.\ \ref{lem:subsquots} and and Prop.\ \ref{prop:smautorep}, $V$ is a Casselman-Wallach representation of $ G(\A) $ by definition and hence $V$ is an LF-space. 
	
	To prove that every irreducible smooth-automorphic subrepresentation $ (\pi,V) $ satisfies the assumption of the proposition, recall that by the irreducibility of $ V $ and the continuity of the right regular action $R$ we have $ V=\Cl_{\calA_\J^\infty(G)}(\mathrm{span}_\C R(G(\A))\varphi) $ for any non-zero $\varphi\in V$. Hence, by the continuity of the action of $\U(\g)$, $ \J^kV=0 $ for every $ k\in\N $ such that $ \J^k\varphi=0 $.
\end{proof}

We shall need the following variant of \eqref{eq:Erhogl}: For a $ G(\A) $-representation $(\pi,V)$ and an irreducible $K_\infty$-representation $(\rho_\infty,W_\infty)$, the usual continuous projection $E_{{\rho_\infty},V}: V\ra V$ onto the $\rho_\infty$-isotypic component of $V$ is defined by
\begin{equation}\label{eq:Erhoinf}
E_{\rho_\infty,V}(v):= d(\rho_\infty) \ \int_{K_\infty} \overline{\xi_{\rho_\infty}(k_\infty)} \ \pi(k_\infty)v \ dk_\infty.
\end{equation}
When the ambient representation $V$ is clear from the context, we will write $E_{\rho_\infty}=E_{\rho_\infty,V}$. The following corollary of Prop.\ \ref{prop:smmf} is fundamental:

\begin{cor}\label{cor:fg}
	Let $ \varphi\in\calA_\J^\infty(G) $. Then, the $ (\g_\infty,K_\infty,G(\A_f)) $-module
	\[ V_{0,\varphi}:=\sum_{\rho_\infty}\left<E_{\rho_\infty}(\varphi)\right>_{(\g_\infty,K_\infty,G(\A_f))}, \]
	spanned by the $(\g_\infty,K_\infty,G(\A_f))$-modules generated by the images $E_{\rho_\infty}(\varphi)$, $ \rho_\infty$ ranging through the equivalence-classes of irreducible $K_\infty$-representations, is annihilated by a power of $\J$ and is an admissible and finitely generated $ (\g_\infty,K_\infty,G(\A_f)) $-submodule of $ \calA_\J(G) $. Moreover, the $G(\A)$-subrepresentation of $ \calA_\J^\infty(G) $ generated by $\varphi$,
	\[ V_\varphi:=\Cl_{\calA_\J^\infty(G)}(\mathrm{span}_\C R(G(\A))\varphi) = \Cl_{\calA_\J^\infty(G)}(V_{0,\varphi}),\]
	is a Casselman-Wallach representation of $ G(\A) $. In particular, $V_\varphi$ is an LF-space and the right regular action $R$ of $G(\A)$ on $V_\varphi$ defines an admissible $G(\A)$-representation.
\end{cor}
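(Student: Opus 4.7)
The plan is to bootstrap everything from the observation that $V_\varphi$ is annihilated by a power of $\J$. Choose $n\in\N$ with $\varphi\in\calA^\infty_d(G)^{K_n,\J^n}$, so that $\J^n\varphi=0$. Every element of $\J^n$ commutes with $R(G(\A))$ and acts continuously on $\calA_\J^\infty(G)$ (as part of the smooth $G(\A)$-structure recorded in Prop.\ \ref{prop:smautorep}), hence annihilates $\mathrm{span}_\C R(G(\A))\varphi$ and, by continuity, also its closure $V_\varphi$. Proposition~\ref{prop:smmf} then applies directly and delivers the last sentence of the corollary at once: $V_\varphi$ is a Casselman-Wallach representation of $G(\A)$, in particular an admissible LF-space on which the right regular action is admissible.

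With $V_\varphi$ placed in the Casselman-Wallach world, my strategy for the statements about $V_{0,\varphi}$ is to identify it with $(V_\varphi)_{(K_\infty)}$, from which admissibility (as a submodule of the admissible $(\g_\infty,K_\infty,G(\A_f))$-module $(V_\varphi)_{(K_\infty)}$) and the asserted equality $V_\varphi=\Cl_{\calA_\J^\infty(G)}(V_{0,\varphi})$ both follow. The easy containments come first: each $E_{\rho_\infty}(\varphi)$ is $K_\infty$-isotypic, right-$K_n$-invariant and killed by $\J^n$, hence lies in $\calA_\J^\infty(G)_{(K_\infty)}=\calA_\J(G)$, so $V_{0,\varphi}\subseteq\calA_\J(G)\cap V_\varphi=(V_\varphi)_{(K_\infty)}$, and $\J^nV_{0,\varphi}=0$ since $\J^n$ commutes with the $(\g_\infty,K_\infty,G(\A_f))$-action.

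The hard part will be promoting this inclusion to an equality, for which Lem.\ \ref{lem:dense_subm} requires density of $V_{0,\varphi}$ in $V_\varphi$. To avoid arguing about $G_\infty$-invariance of closures by hand, I would appeal to Lem.\ \ref{lem:008}: since $V_{0,\varphi}$ is a $(\g_\infty,K_\infty,G(\A_f))$-submodule of $\calA_\J(G)$, its closure $\Cl_{\calA_\J^\infty(G)}(V_{0,\varphi})$ is automatically a smooth-automorphic subrepresentation, hence a $G(\A)$-stable closed subspace of $V_\varphi$. The remaining point is to check that this closure contains the cyclic vector $\varphi$; here I would invoke the standard convergence of the $K_\infty$-type expansion inside the Casselman-Wallach $G_\infty$-representation $V_\varphi^{K_n}$ (which is Fr\'echet by Prop.\ \ref{prop:smmf}), concluding that $\varphi=\sum_{\rho_\infty}E_{\rho_\infty}(\varphi)$ converges in the original topology of $V_\varphi^{K_n}$, and hence in $V_\varphi$. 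The $G(\A)$-stable closed subspace $\Cl_{\calA_\J^\infty(G)}(V_{0,\varphi})$ then contains $\varphi$, forcing equality with $V_\varphi$. Lem.\ \ref{lem:dense_subm} now produces $V_{0,\varphi}=(V_\varphi)_{(K_\infty)}$, and hence also $V_\varphi=\Cl_{\calA_\J^\infty(G)}(V_{0,\varphi})$. For finite generation, I would then pick $v_1,\ldots,v_\ell$ generating the finitely generated $(\g_\infty,K_\infty)$-module $(V_\varphi^{K_n})_{(K_\infty)}$ (finite generation holding because $V_\varphi^{K_n}$ is Casselman-Wallach as a $G_\infty$-representation) and rerun the same density argument on the $(\g_\infty,K_\infty,G(\A_f))$-submodule $\langle v_1,\ldots,v_\ell\rangle$: it is admissible and dense in $V_\varphi$, so a final application of Lem.\ \ref{lem:dense_subm} forces it to coincide with $(V_\varphi)_{(K_\infty)}=V_{0,\varphi}$.
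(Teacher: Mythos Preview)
Your argument is correct. The ingredients are the same as the paper's---annihilation by $\J^n$, Prop.~\ref{prop:smmf}, Lem.~\ref{lem:008}, convergent $K_\infty$-type expansions (\cite{hch66}, Lem.~5), and Lem.~\ref{lem:dense_subm}---but you run them in the opposite order. The paper first proves admissibility of $V_{0,\varphi}$ \emph{directly}, by observing that each $E_{\rho_\infty}^{K_n}(V_{0,\varphi})$ sits inside the finite-dimensional space $\{\phi\in\calA_\J(G):\J^m\phi=0,\ E_{\rho_\infty}^{K_n}\phi=\phi\}$ (\cite{bojac}, \S4.3(i)); then deduces finite generation of $V_{0,\varphi}^{K_m}$ via \cite{vogan}, Cor.~5.4.16; and only then invokes Thm.~\ref{thm:005} and Prop.~\ref{prop:smmf} to obtain the Casselman--Wallach structure on $\Cl(V_{0,\varphi})$, finishing with the $K_\infty$-type expansion to identify this closure with $V_\varphi$. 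You instead front-load Prop.~\ref{prop:smmf} to make $V_\varphi$ Casselman--Wallach immediately, and then read off admissibility and finite generation of $V_{0,\varphi}$ from the known structure of $V_\varphi^{K_n}$ via two appeals to Lem.~\ref{lem:dense_subm}. Your route is arguably cleaner in that it avoids Thm.~\ref{thm:005} altogether; the paper's route has the virtue that the properties of $V_{0,\varphi}$ are established intrinsically in $\calA_\J(G)$, without first building the ambient Casselman--Wallach representation. One small point worth making explicit in your finite-generation step: the ``same density argument'' for $W_0:=\langle v_1,\ldots,v_\ell\rangle_{(\g_\infty,K_\infty,G(\A_f))}$ is not literally the $K_\infty$-type expansion of $\varphi$, but rather the observation that $W_0\supseteq\langle v_1,\ldots,v_\ell\rangle_{(\g_\infty,K_\infty)}=(V_\varphi^{K_n})_{(K_\infty)}$, which is dense in $V_\varphi^{K_n}\ni\varphi$ by \cite{hch66}, Lem.~4, so that $\Cl(W_0)$ (being $G(\A)$-stable by Lem.~\ref{lem:008}) contains $V_\varphi$.
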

\begin{proof}
	Let us fix $ m\in\N $ such that $ \varphi\in\calA_{d}^\infty(G)^{K_m,\J^m} $. By construction, $V_{0,\varphi}\subseteq\calA_\J(G)$ is annihilated by $\J^m$. Thus, for every irreducible representation $ \rho_\infty$ of ${K_\infty} $ and $ n\in\N $, we have
	\[ E_{\rho_\infty}^{K_n} (V_{0,\varphi}) \subseteq\left\{\phi\in\calA_\J(G):\J^m\phi=0\text{ and }E_{\rho_\infty}^{K_n}(\phi)=\phi\right\}. \]
	As the space on the right-hand side is finite-dimensional, cf.\ \cite{bojac}, \S 4.3(i), $V_{0,\varphi}$ is an admissible $ (\g_\infty,K_\infty,G(\A_f)) $-module. Consequently, $ V_{0,\varphi}^{K_m} $ is an admissible and $ \calZ(\g) $-finite $ (\g_\infty,K_\infty) $-module, hence  a finitely generated $ (\g_\infty,K_\infty) $-module, cf.\ \cite{vogan}, Cor.\ 5.4.16. In order to deduce that hence $V_{0,\varphi}$ is finitely generated, observe that it is generated as a $ G(\A_f) $-module by $\sum_{\rho_\infty}\left<E_{\rho_\infty}(\varphi)\right>_{(\g_\infty,K_\infty)}$, which, by construction, is a $ (\g_\infty,K_\infty) $-submodule of $ V_{0,\varphi}^{K_m} $. 	This shows the first assertion. For the second assertion, observe that $V_{0,\varphi}$ being an admissible $ (\g_\infty,K_\infty,G(\A_f)) $-submodule of $ \calA_\J(G) $, by our Thm.\ \ref{thm:005}, $\Cl_{\calA_\J^\infty(G)}(V_{0,\varphi})$ is an admissible subrepresentation of $ \calA_\J^\infty(G) $, and it is obviously annihilated by $ \J^m $, so it is a Casselman-Wallach representation of $ G(\A) $ by Prop.\ \ref{prop:smmf}. It remains to prove that $V_\varphi=\Cl_{\calA_\J^\infty(G)}(V_{0,\varphi})$. The inclusion $\Cl_{\calA_\J^\infty(G)}(V_{0,\varphi}) \subseteq V_\varphi $ is obvious. On the other hand, by \cite[Lemma 5]{hch66} we have that $ \varphi=\sum_{\rho_\infty}E_{\rho_\infty}(\varphi)\in \Cl_{\calA_\J^\infty(G)}V_{0,\varphi} $, hence $ V_\varphi\subseteq \Cl_{\calA_\J^\infty(G)}(V_{0,\varphi}) $.
\end{proof}

\begin{rem}
We remark again that the sheer fact that $V_\varphi$ is a closed subspace of an LF-space, namely $\calA_\J^\infty(G) $, does not imply that $V_\varphi$ is necessarily an LF-space itself, whence this needed an extra argument. Moreover, it does not follow from purely abstract considerations that a $G(\A)$-representation, spanned by one single function, is necessarily admissible (and hence even less so necessarily a Casselman-Wallach representation): For instance, this will even fail for (equivalence classes of) functions $f\in L^2_{dis}(G(F)A^\R_G\backslash G(\A))$ in the discrete part of the $L^2$-spectrum. Indeed, the reader may easily construct a function $f\in L^2_{dis}(G(F)A^\R_G\backslash G(\A))$ such that the corresponding $G(\A)$-subrepresentation $$V_f:=\Cl_{L^2_{dis}(G(F)A^\R_G\backslash G(\A))}(\mathrm{span}_\C R(G(\A))f)$$ of the Hilbert-space $L^2_{dis}(G(F)A^\R_G\backslash G(\A))$ is not admissible. We remark that the $\mathcal Z(\g)$-finiteness of $\varphi\in\calA_\J^\infty(G)$ was essential in order to obtain Cor.\ \ref{cor:fg}
\end{rem}
We are now ready to prove a general result, comparing irreducible smooth-automorphic representations with irreducible automorphic representations: 

\begin{thm}\label{prop:subquots}
	If $ V $ is an irreducible smooth-automorphic representation, then $ V_{(K_\infty)} $ is an irreducible automorphic representation. In particular, every irreducible smooth-automorphic representation is admissible. Conversely, let $ V_0 $ be an irreducible automorphic representation of $ G(\A) $. Then, there exists an irreducible smooth-automorphic representation $V $ such that $ V_{(K_\infty)}\cong V_0 $.
\end{thm}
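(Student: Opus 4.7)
The plan is to establish a dictionary between irreducible smooth-automorphic representations of $G(\A)$ and irreducible (classical) automorphic representations of $G(\A)$, paired up via the passage to $K_\infty$-finite vectors. The pivotal technical tools are Cor.\ \ref{cor:fg}, which produces a cyclic Casselman-Wallach representation inside $\calA^\infty_\J(G)$ out of any single smooth-automorphic form, and Thm.\ \ref{thm:005}, which matches admissible subrepresentations of $\calA^\infty_\J(G)$ with admissible $(\g_\infty,K_\infty,G(\A_f))$-submodules of $\calA_\J(G)$.

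For the forward direction, let $V = U/W$ be irreducible smooth-automorphic with quotient map $q\colon U \to V$. By Harish-Chandra's density of $K_\infty$-finite vectors I pick a non-zero element of $V_{(K_\infty)}$ and lift it to a $K_\infty$-finite $\varphi \in U \cap \calA_\J(G)$. Cor.\ \ref{cor:fg} then supplies the admissible, finitely generated $(\g_\infty,K_\infty,G(\A_f))$-module $V_{0,\varphi} := \langle\varphi\rangle_{(\g_\infty,K_\infty,G(\A_f))} \subseteq \calA_\J(G)$ together with the Casselman-Wallach representation $V_\varphi := \Cl_{\calA^\infty_\J(G)}(V_{0,\varphi}) \subseteq U$. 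The image $q(V_{0,\varphi}) \subseteq V_{(K_\infty)}$ is admissible (quotients of admissible $(\g_\infty,K_\infty,G(\A_f))$-modules are admissible) and dense in $V$ (by irreducibility, its closure being a non-trivial closed $G(\A)$-subrepresentation), so Lem.\ \ref{lem:dense_subm} yields both admissibility of $V$ and $V_{(K_\infty)} = q(V_{0,\varphi})$. Irreducibility of $V_{(K_\infty)}$ follows by repeating the same argument with a $K_\infty$-finite element of any putative proper submodule $M \subsetneq V_{(K_\infty)}$ playing the role of $\varphi$: the repeated argument identifies the cyclic module it generates with the whole $V_{(K_\infty)}$, forcing $M = V_{(K_\infty)}$.

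For the converse, given an irreducible automorphic $V_0$, Harish-Chandra's theorem provides an infinitesimal character, so some ideal $\J \lhd \calZ(\g)$ of finite codimension annihilates $V_0$; writing $V_0 = U_0/W_0$ inside $\calA_\J(G)$ and replacing $U_0$ by the cyclic submodule generated by any $K_\infty$-finite lift $\varphi_0$ of a non-zero element of $V_0$, I may assume $U_0$ is cyclic. Cor.\ \ref{cor:fg} supplies the Casselman-Wallach $V_{\varphi_0} := \Cl_{\calA^\infty_\J(G)}(U_0)$ with $(V_{\varphi_0})_{(K_\infty)} = U_0$, and Thm.\ \ref{thm:005} applied to the admissible submodule $W_0 \subseteq U_0$ yields the closed smooth-automorphic subrepresentation $W := \Cl_{\calA^\infty_\J(G)}(W_0) \subseteq V_{\varphi_0}$ with $W_{(K_\infty)} = W_0$. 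Setting $V := V_{\varphi_0}/W$, Lem.\ \ref{lem:cass_wall_reps}(1) shows that $V$ is itself Casselman-Wallach (hence complete, so a genuine $G(\A)$-representation), and by Def.\ \ref{defn:001} it is smooth-automorphic. Applying the continuous $K_\infty$-isotypic projections $E_{\rho_\infty}$ to $0 \to W \to V_{\varphi_0} \to V \to 0$ gives exactness on $K_\infty$-finite vectors, whence $V_{(K_\infty)} \cong U_0/W_0 = V_0$. Irreducibility of $V$ follows at once: any non-zero closed $G(\A)$-subrepresentation has non-zero $K_\infty$-finite part (by density), which by irreducibility of $V_0$ must equal $V_{(K_\infty)}$, which is in turn dense in $V$.

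The principal obstacle is the completeness of $V_{\varphi_0}/W$ in the converse direction: a closed subspace of an LF-space need not be an LF-space, nor need a quotient by a closed subspace be complete (cf.\ Rem.\ \ref{rem:001}), so this step relies essentially on Casselman-Wallach theory via Lem.\ \ref{lem:cass_wall_reps}(1) rather than on any soft formalism. A secondary subtle point is ensuring that the admissible cyclic submodule produced by Cor.\ \ref{cor:fg} is dense in the ambient smooth-automorphic representation -- which, combined with Harish-Chandra density of $K_\infty$-finite vectors, is precisely the mechanism that allows the transfer between the topological and the purely $(\g_\infty,K_\infty,G(\A_f))$-module viewpoints through Lem.\ \ref{lem:dense_subm}.
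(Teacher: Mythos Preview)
Your proof is correct and follows essentially the same strategy as the paper's own argument: in both directions you reduce to a cyclic $(\g_\infty,K_\infty,G(\A_f))$-submodule generated by a single $K_\infty$-finite vector, invoke admissibility (via Borel--Jacquet, packaged for you in Cor.~\ref{cor:fg}), and then use Lem.~\ref{lem:dense_subm} to identify the $K_\infty$-finite part; for the converse you take closures via Thm.~\ref{thm:005}/Prop.~\ref{prop:smmf} and pass to the quotient using Lem.~\ref{lem:cass_wall_reps}\eqref{lem:cass_wall_reps:1}, exactly as the paper does.

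One small point deserves an extra sentence: when you assert that the closure of $q(V_{0,\varphi})$ in $V$ is a $G(\A)$-subrepresentation, note that Lem.~\ref{lem:008} is stated only for closures taken inside $\calA^\infty_\J(G)$, not inside a quotient. Your setup already contains the fix---since $V_\varphi=\Cl_{\calA^\infty_\J(G)}(V_{0,\varphi})$ is $G(\A)$-stable by Cor.~\ref{cor:fg}, one has $\Cl_V(q(V_{0,\varphi}))=\Cl_V(q(V_\varphi))$, and the latter is visibly $G(\A)$-stable---but it would be worth making this explicit. The paper handles the same issue by working upstairs in $U$, taking the closure of $\langle\phi\rangle+W_{(K_\infty)}$ (a $(\g_\infty,K_\infty,G(\A_f))$-submodule of $\calA_\J(G)$ to which Lem.~\ref{lem:008} applies directly) and observing it contains $W$, hence equals $U$ by irreducibility.
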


\begin{proof}
	Let $ W\subseteq U $ be closed $ G(\A) $-invariant subspaces of $ \calA^\infty_\J(G) $ such that $ V:=U/W $ is an irreducible $ G(\A) $-representation. We show that the $ (\g_\infty,K_\infty,G(\A_f)) $-module $ V_{(K_\infty)} $ is irreducible. Let us denote by $ \underline \varphi $ (resp., $\underline M$) the image of an arbitrary $ \varphi\in\calA^\infty_\J(G) $ (resp., $ M\subseteq\calA^\infty_\J(G) $) under the canonical epimorphism $ \calA^\infty_\J(G)\twoheadrightarrow\calA^\infty_\J(G)/W $.
	Since $ U_{(K_\infty)} $ is dense in $ U $ (\cite{hch66}, Lem.\ 4), we have $ U_{(K_\infty)}\setminus W\neq\emptyset $. Thus, it suffices to prove that for every $ \phi\in U_{(K_\infty)}$, which is not in $W $, the $ (\g_\infty,K_\infty,G(\A_f)) $-submodule
	\[ 	V_{0,\phi}:=\left<\underline \phi\right>_{(\g_\infty,K_\infty,G(\A_f))}=\underline{\left<\phi\right>_{(\g_\infty,K_\infty,G(\A_f))}} \]
	of $V_{(K_\infty)}$ equals $ V_{(K_\infty)} $. To this end, note that by Lem.\ \ref{lem:008},
	\[ \Cl_U\left(\left< \phi \right>_{(\g_\infty,K_\infty,G(\A_f))}+W_{(K_\infty)}\right) \]
	is a closed $ G(\A) $-invariant subspace of $ U $. As it obviously contains $ W $ as a proper subspace, by the irreducibility of $ V $ it equals $U$. In particular, $ U_{0,\phi}:=\left< \phi \right>_{(\g_\infty,K_\infty,G(\A_f))}+W_{(K_\infty)} $ is dense in $ U $, which implies that $ \underline{U_{0,\phi}}=V_{0,\phi} $ is dense in $ \underline U=V $. As $V_{0,\phi}$ is moreover an admissible $(\g_\infty,K_\infty,G(\A_f))$-submodule of $V_{(K_\infty)}$ by \cite{bojac}, Prop.\ 4.5.(4), and as $V$ is a smooth $G(\A)$-representation by Lem.\ \ref{lem:subsquots} and and Prop.\ \ref{prop:smautorep}, Lem.\ \ref{lem:dense_subm} finally implies that $ V_{0,\phi}=V_{(K_\infty)} $, as desired. Hence,  $ V_{(K_\infty)} $ is irreducible. At the same time, the other implication of Lem.\ \ref{lem:dense_subm} shows that $V$ is admissible as claimed.\\\\
	We will now prove the second assertion: To prove the existence of $V$, let $ W_0\subseteq U_0 $ be $ (\g_\infty,K_\infty,G(\A_f)) $-submodules of $ \calA_\J(G) $ such that $ U_0/W_0\cong V_0 $. Let us fix a $ \phi_0\in U_0$, which is not in $W_0 $ and denote $ U_1:=\left<\phi_0\right>_{(\g_\infty,K_\infty,G(\A_f))}\subseteq U_0 $. By the irreducibility of $ U_0/W_0 $, the canonical homomorphism $ U_1\to U_0/W_0 $ is surjective, hence, denoting its kernel by $ W_1 $, we have the following isomorphisms of $ (\g_\infty,K_\infty,G(\A_f)) $-modules: 
	\begin{equation}\label{eq:012}
	U_1/W_1\cong U_0/W_0\cong V_0.
	\end{equation}
	Let us fix $ k\in\N $ such that $ \J^k\phi_0=0 $. Since by \cite{bojac}, Prop.\ 4.5.(4) the $ (\g_\infty,K_\infty,G(\A_f)) $-submodules $ W_1\subseteq U_1 $ of $ \calA_\J(G) $ are admissible, Thm.\ \ref{thm:005} implies that their closures $ \overline W_1:=\Cl_{\calA_\J^\infty(G)}(W_1) $ and $ \overline U_1:=\Cl_{\calA_\J^\infty(G)}(U_1) $ are admissible smooth-automorphic subrepresentations satisfying
	\begin{equation}\label{eq:011}
	(\overline W_1)_{(K_\infty)}=W_1\qquad\text{and}\qquad (\overline U_1)_{(K_\infty)}=U_1.
	\end{equation}
	Since moreover the spaces $ \overline W_1\subseteq\overline U_1 $ are obviously annihilated by $ \J^k $, by Prop.\ \ref{prop:smmf} they are Casselman-Wallach representations of $ G(\A) $, and hence so is their quotient $ V:=\overline U_1/\overline W_1 $ by Lem.\ \ref{lem:cass_wall_reps}.\eqref{lem:cass_wall_reps:1}. Moreover, we have the following isomorphisms of $ (\g_\infty,K_\infty,G(\A_f)) $-modules:
	\[ \begin{aligned}
	V_{(K_\infty)}&=\bigoplus_{\rho\in\widehat{K_\infty}}E_\rho(\overline U_1/\overline W_1)
	=\bigoplus_{\rho\in\widehat{K_\infty}}(E_\rho\overline U_1+\overline W_1)/\overline W_1
	=((\overline U_1)_{(K_\infty)}+\overline W_1)/\overline W_1\\
	&\overset{\eqref{eq:011}}=(U_1+\overline W_1)/\overline W_1
	\cong U_1/W_1
	\overset{\eqref{eq:012}}\cong V_0,
	\end{aligned} \]
	where the inverse of the next-to-last isomorphism is just the canonical map that is obviously surjective and is injective because $ U_1\cap\overline W_1=(\overline W_1)_{(K_\infty)}\overset{\eqref{eq:011}}=W_1 $. Finally, the irreducibility of $ V_{(K_\infty)}\cong V_0 $ implies the irreducibility of $ G(\A) $-representation $ V $ as in the proof of claim \eqref{enum:001:3} in the proof of Thm.\ \ref{thm:005}.
\end{proof}

\subsection{The general dictionary III: The local-global principle and the restricted tensor product theorem}

As it is implicit in the proof of Thm.\ \ref{prop:subquots}, the underlying $ (\g_\infty,K_\infty,G(\A_f)) $-module $ V_{(K_\infty)} $ of every irreducible smooth-automorphic representation $V$ allows a completion as a Casselman-Wallach representation of $G(\A)$. By Lem.\ \ref{lem:cass_wall_reps}.\eqref{lem:cass_wall_reps:2} this completion is in fact unique up to isomorphism of $G(\A)$-representations. \\\\
Our next result provides the necessary local-global principle for all such irreducible smooth-automorphic Casselman-Wallach representations. Its underlying algebraic assertion seems well-known to experts (see \cite{cogdell_fields}, Thm.\ 3.4, for $G=\GL_n$), albeit the decisive question, which is the correct choice of a locally convex topology on restricted tensor products, seems to remain open in the available literature, whence, lacking a precise reference, we decided to include the result in our paper and fill this gap. 
In order to state the result, we let $\cprojtp$, resp.\ $\cindtp$, denote the completed projective tensor product, resp.\ completed inductive tensor product, of LCTVSs. We refer to \cite{grothendieck_book}, I, \S 1, n$^\circ$'s 1--3 and \cite{warner}, App.\ 2.2, for their basic properties.

\begin{thm}[Tensor product theorem]\label{thm:TPthm}
	Let $(\pi,V)$ be an irreducible smooth-automorphic Casselman-Wallach representation of $G(\A)$. Then, for each $\vv\in S$, there is an irreducible smooth admissible representation $(\pi_\vv,V_\vv)$ of $G(F_\vv)$, which is furthermore of moderate growth, if $\vv\in S_\infty$, such that as $G(\A)$-representations
	\begin{equation}\label{eq:TPthm}
	\pi \cong \bigcprojtp_{\vv\in S_\infty} \pi_\vv \ \cindtp \ \rtprod_{\vv\notin S_\infty}\pi_\vv,
	\end{equation}
	where the restricted tensor product $\rtprod_{\vv\notin S_\infty}\pi_\vv$ is endowed with the finest locally convex topology. Among all representations with the aforementioned properties, the $\pi_\vv$ are unique up to isomorphy. 
\end{thm}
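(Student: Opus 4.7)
My plan is to reduce to the algebraic (abstract) tensor product theorem for admissible $(\g_\infty,K_\infty,G(\A_f))$-modules and then topologise the resulting factors in a way that is forced by Lem.\ \ref{lem:cass_wall_reps}.\eqref{lem:cass_wall_reps:2}. First, pass from $(\pi,V)$ to its underlying $(\g_\infty,K_\infty,G(\A_f))$-module $V_{(K_\infty)}$, which by Thm.\ \ref{prop:subquots} is an irreducible admissible automorphic representation. The classical algebraic restricted tensor product theorem (Flath at finite places, and its archimedean refinement factorising admissible $(\g_\infty,K_\infty)$-modules as external tensor products of irreducible $(\g_\vv,K_\vv)$-modules, $\vv\in S_\infty$) then yields irreducible $(\g_\vv,K_\vv)$-modules $V_{\vv,0}$ for $\vv\in S_\infty$ and irreducible smooth admissible $G(F_\vv)$-representations $\pi_\vv$ for $\vv\notin S_\infty$ such that
\[
V_{(K_\infty)}\;\cong\;\Bigl(\bigotimes_{\vv\in S_\infty}V_{\vv,0}\Bigr)\otimes\rtprod_{\vv\notin S_\infty}\pi_\vv
\]
as $(\g_\infty,K_\infty,G(\A_f))$-modules, with the factors unique up to isomorphism.

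Next, I would topologise each factor by invoking the classical Casselman--Wallach completion locally: for every $\vv\in S_\infty$ there is a unique Casselman--Wallach representation $(\pi_\vv,V_\vv)$ of $G_\vv$ whose underlying $(\g_\vv,K_\vv)$-module is $V_{\vv,0}$; in particular $\pi_\vv$ is of moderate growth. Set
\[
\pi'_\infty\;:=\;\bigcprojtp_{\vv\in S_\infty}\pi_\vv,\qquad \pi'_f\;:=\;\rtprod_{\vv\notin S_\infty}\pi_\vv,\qquad \pi'\;:=\;\pi'_\infty\cindtp\pi'_f,
\]
where $\pi'_f$ carries its finest locally convex topology (the canonical LF-topology on the smooth admissible $G(\A_f)$-representation, cf.\ \S\ref{sect:G(A)reps}). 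The completed projective tensor product $\pi'_\infty$ is a Casselman--Wallach representation of $G_\infty$ (this is the archimedean topological tensor product theorem, built from Wallach's work on smooth representations of moderate growth), and $\pi'_f=\bigtoplus_n\pi'_{f,n}$ is a locally convex direct sum of finite-dimensional subspaces. Since for finite-dimensional $W$ one has $\pi'_\infty\cindtp W=\pi'_\infty\cprojtp W$, and since completed projective (or inductive) tensor product commutes with locally convex direct sums in the second variable, $\pi'$ is naturally an LF-space, with defining sequence given by suitable $K_n$-fixed parts; each step is Fr\'echet because it is a finite sum of copies of $\pi'_\infty$. Thus $\pi'$ is a Casselman--Wallach representation of $G(\A)$ in the sense of \S\ref{subsec:cass_wall}.

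The key comparison step is to show that the underlying $(\g_\infty,K_\infty,G(\A_f))$-module of $\pi'$ is (up to the chosen isomorphism) precisely $V_{(K_\infty)}$. This reduces to checking that taking $K_\infty$-finite, smooth vectors in $\pi'$ recovers the algebraic restricted tensor product. The archimedean half is exactly the statement that $(\pi'_\infty)_{(K_\infty)}$ equals the algebraic tensor product of the $V_{\vv,0}$ (a standard consequence of the local Casselman--Wallach theorem and the behaviour of $K$-types under completed projective tensor products), while for $\pi'_f$ the LF-topology is the finest locally convex one, so the abstract admissible module structure is intrinsic. With this identification in hand, Lem.\ \ref{lem:cass_wall_reps}.\eqref{lem:cass_wall_reps:2} upgrades the $(\g_\infty,K_\infty,G(\A_f))$-module isomorphism $V_{(K_\infty)}\cong\pi'_{(K_\infty)}$ to an isomorphism of $G(\A)$-representations $V\cong\pi'$, establishing \eqref{eq:TPthm}.

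Uniqueness of the local factors is a direct consequence of uniqueness in the algebraic tensor product theorem together with Lem.\ \ref{lem:cass_wall_reps}.\eqref{lem:cass_wall_reps:2} applied place-by-place at the archimedean places and the well-known rigidity of smooth admissible representations of $p$-adic groups at the finite places. The principal obstacle I anticipate is not the algebraic decomposition (which is essentially known) but the topological matching in the comparison step: verifying that the completed inductive tensor product $\pi'_\infty\cindtp\pi'_f$ carries exactly the LF-topology with which $V$ is equipped, and that this topology is intrinsically determined once the underlying module is fixed. This hinges crucially on the fact, established in Prop.\ \ref{prop:smmf}, that $V^{K_n}$ is Fr\'echet of Casselman--Wallach type, so that the LF-structure of $V$ is rigid enough for the Open Mapping Theorem to transfer the canonical continuous $G(\A)$-equivariant bijection $\pi'\to V$ (provided by Casselman--Wallach on each $K_n$-fixed step) into a topological isomorphism.
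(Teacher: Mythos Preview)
Your proposal is correct and follows essentially the same route as the paper: reduce to the module level via Thm.~\ref{prop:subquots} and Flath's theorem, take local Casselman--Wallach completions at the archimedean places, form the topological tensor product, and then use the uniqueness of the Casselman--Wallach completion to match. The only organisational difference is that the paper carries out the comparison explicitly on each $K_n$-fixed level (obtaining $V^{K_n}\cong\bigcprojtp_{\vv\in S_\infty}V_\vv\otimes\rtprod_{\vv\notin S_\infty}V_\vv^{K_{n,\vv}}$ via the local Casselman--Wallach theorem, citing \cite{vogan_venice}, Lem.~9.9.(3), for the fact that the completed projective tensor product of the archimedean completions is itself the Casselman--Wallach completion of the tensor product of the underlying modules) and then passes to the limit using \cite{warner}, Thm.~A.2.2.5, whereas you package this into a single invocation of Lem.~\ref{lem:cass_wall_reps}.\eqref{lem:cass_wall_reps:2} at the end --- which is harmless, since that lemma's proof is precisely that level-by-level argument.
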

\begin{proof}
	By Thm.\ \ref{prop:subquots}, $V_{(K_\infty)}$ is an irreducible admissible $(\g_\infty,K_\infty,G(\A_f))$-module. It hence follows from the classical restricted tensor product theorem, \cite{flath}, Thm.\ 3, and our \S \ref{sect:genreps}, that there are irreducible admissible $(\g_\vv ,K_\vv )$-modules $(\pi_{0,\vv },V_{0,\vv })$, $\vv \in S_\infty$, and irreducible smooth admissible $G(F_\vv )$-representations $(\pi_{\vv },V_{\vv })$, $\vv \notin S_\infty$, which are all unique up to isomorphism, such that
	$V_{(K_\infty)} \cong \bigotimes_{\vv \in S_\infty} V_{0,\vv } \ \otimes \ \rtprod_{\vv \notin S_\infty}V_\vv $ as $(\g_\infty,K_\infty,G(\A_f))$-modules. Hence, for each $n\in\N$, $V^{K_n}_{(K_\infty)}$ and  $\bigotimes_{\vv \in S_\infty} V_{0,\vv } \ \otimes \ \rtprod_{\vv \notin S_\infty}V^{K_{n,\vv }}_\vv $ are isomorphic (finitely generated, admissible) $(\g_\infty,K_\infty)$-modules, hence share the same Casselman-Wallach completion, 
	$$\overline{V^{K_n}_{(K_\infty)}}^{\sf \ CW} \cong \overline{\left(\bigotimes_{\vv \in S_\infty} V_{0,\vv } \ \otimes \ \rtprod_{\vv \notin S_\infty}V^{K_{n,\vv }}_\vv \right)}^{\sf \ CW},$$
	to a smooth admissible $G_\infty$-representation of moderate growth, cf.\ \cite{wallachII}, \S 11.5.6 and \S 11.6.8. Since $V^{K_n}$ is a Casselman-Wallach representation of $ G_\infty $, and $\rtprod_{\vv \notin S_\infty}V^{K_{n,\vv }}_\vv $ is finite-dimensional, we obtain an isomorphism of $G_\infty$-representations 
	$$V^{K_n} \cong  \overline{\left(\bigotimes_{\vv \in S_\infty} V_{0,\vv }\right)}^{\sf \ CW} \ \otimes \ \rtprod_{\vv \notin S_\infty}V^{K_{n,\vv }}_\vv ,$$
	cf.\ \cite{wallachII}, Thm.\ 11.6.7, which, according to \cite{vogan_venice}, Lem.\ 9.9.(3), simplifies to 
	$$V^{K_n} \cong \bigcprojtp_{\vv \in S_\infty} V_\vv \ \otimes \rtprod_{\vv \notin S_\infty}V^{K_{n,\vv }}_\vv ,$$
	for $(\pi_\vv ,V_\vv ):=(\overline{\pi_{0,\vv }}^{\sf \ CW},\overline{V_{0,\vv }}^{\sf \ CW}) $. Taking the inductive limit and applying \cite{warner}, Thm.\ A.2.2.5, we hence obtain an isomorphism of $G_\infty$-representations
	$$V\cong \bigcprojtp_{\vv \in S_\infty} V_\vv \ \cindtp \ \rtprod_{\vv \notin S_\infty}V_\vv $$
	where by construction, $\rtprod_{\vv \notin S_\infty}V_\vv $ carries the finest locally convex topology, cf.\ \S \ref{sect:LF}. Invoking the classical tensor product theorem once more, this isomorphism is seen to be $G(\A_f)$-equivariant on the dense subspace of $K_\infty$-finite vectors. Hence, the theorem follows from the continuity of this isomorphism and the continuity of the action of $G(\A_f)$.  
\end{proof}


\section{Main results: Parabolic and cuspidal support}

\subsection{}
In this section we will prove our main results on the decomposition of the space of smooth-automorphic forms along the parabolic and the more refined cuspidal support. \\\\ As far as the parabolic support of a smooth-automorphic form is concerned, our result provides a {\it topological} version of Langlands's algebraic direct sum decomposition of the space of smooth functions of uniform moderate growth, recorded and proved in two different ways in \cite{bls}, Thm.\ 2.4, and a little earlier in \cite{casselman_schwartz}, Thm.\ 1.16 and Cor.\ 4.7, respectively. We refer to \cite{langlands72} for the most original source.\\\\
On the other hand, our result on the cuspidal support formally mirrors the main result of \cite{franke_schwermer}, cf.\ Thm.\ 1.4.(2), on the level of smooth-automorphic forms. As a part of our analysis, we will prove a fine {\it structural, topological} decomposition of the space of cuspidal smooth-automorphic forms, which is a ``smooth'' analogue of the famous theorem of Gelfand--Graev--Piatetski-Shapiro \cite{GGPS},\S 7.2, on the decomposition of the space of cuspidal $L^2$-functions into a direct Hilbert sum. We refer to Thm.\ \ref{thm:cusp} below for this characterization of the irreducible cuspidal smooth-automorphic representations and to Thm.\ \ref{thm:cuspsup} for our main result on the cuspidal support of a general smooth-automorphic form.

\subsection{$A^\R_G$-invariant smooth-automorphic forms}
In order to obtain a meaningful notion of square-integrable smooth-automorphic forms, as usual, we shall pass over from $G(F)\backslash G(\A)$ to the smaller quotient $G(F) A^\R_G\backslash G(\A)\cong G(F)\backslash G(\A)^1$, which is of finite volume, i.e., we shall work with left-$G(F) A^\R_G$-invariant functions rather than with left-$G(F)$-invariant functions from now on. So, for every $ n\in\N $, let $\calA_d^\infty([G])^{K_n,\J^n}$ be the closed (and hence Fr\'echet) subspace of $A^\R_G$-invariant elements in $\calA_d^\infty(G)^{K_n,\J^n}$. We define the LF-space
\begin{equation}\label{eq:029}
\calA_\J^\infty([G]):=\lim_{n\to\infty}\calA_d^\infty([G])^{K_n,\J^n}.
\end{equation}
\begin{rem}
In the notation of Franke, cf.\ \cite{franke}, the underlying vector space of the LF-space $\calA_\J^\infty([G])$ was denoted $\mathfrak{Fin}_\J(C^\infty_{umg}(G \mathcal A_{\mathcal G}(\textit{\textbf{R}})^{o}\backslash \mathbb G))$. Franke, however, did not specify any topology on the vector space $\mathfrak{Fin}_\J(C^\infty_{umg}(G \mathcal A_{\mathcal G}(\textit{\textbf{R}})^{o}\backslash \mathbb G))$. 
\end{rem}
\noindent Observe that it is a priori by no means clear that the above LF-topology on $\calA_\J^\infty([G])$ agrees with the subspace-topology inherited from the inclusion $\calA_\J^\infty([G])\subseteq \calA_\J^\infty(G)$, i.e., that $\calA_\J^\infty([G])$ is in fact a smooth-automorphic subrepresentation of $\calA_\J^\infty(G)$. However, we shall see now that our ad hoc chosen LF-topology on $ \calA_\J^\infty([G]) $ behaves well with respect to the LF-topology on $ \calA_\J^\infty(G) $. To this end, recall that ${\rm Lie}(A^\R_G)=\a_G$. Since obviously $ \calA_\J^\infty([G])=\calA_{\J+\left<\a_G\right>_{\calZ(\g)}}^\infty([G]), $ when studying the spaces $ \calA_{\J}^\infty([G]) $, there is hence no loss of generality in assuming that $ \J\supseteq\a_G $. 
{\it We shall therefore suppose from now on that $ \J\supseteq\a_G $.} Doing so, we obtain 

\begin{prop}\label{prop:014}
	The space $ \calA_\J^\infty([G]) $, together with the right regular action, is a smooth-automorphic subrepresentation. Otherwise put, the LF-space topology on $ \calA_\J^\infty([G]) $, as defined by \eqref{eq:029}, agrees with the subspace topology inherited from $ \calA_\J^\infty(G) $. 
\end{prop}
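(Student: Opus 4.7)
The plan proceeds in three parts. First I would verify that $\calA_\J^\infty([G])$ is a closed $G(\A)$-invariant subspace of $\calA_\J^\infty(G)$: each Fr\'echet step $\calA_d^\infty([G])^{K_n,\J^n}$ equals $\calA_\J^\infty([G]) \cap \calA_d^\infty(G)^{K_n,\J^n}$ and is the closed subspace of $\calA_d^\infty(G)^{K_n,\J^n}$ cut out by the vanishing of the continuous operators $R(a)-\id$, $a \in A_G^\R$; by the characterization of closed sets in a strict inductive limit, $\calA_\J^\infty([G])$ is closed in $\calA_\J^\infty(G)$, while $G(\A)$-invariance is immediate from the centrality of $A_G^\R$ in $G(\A)$. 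Applying Lemma \ref{lem:subsquots} and Proposition \ref{prop:smautorep}, I then conclude that $\calA_\J^\infty([G])$ equipped with the subspace topology is a smooth $G(\A)$-subrepresentation.

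The heart of the argument is then the topological comparison. The LF-topology of \eqref{eq:029} is at least as fine as the subspace topology, by the universal property of strict inductive limits applied to the continuous inclusions $\calA_d^\infty([G])^{K_n,\J^n} \hookrightarrow \calA_\J^\infty(G)$. For the reverse direction, the plan is to show that every continuous seminorm $p$ on $\calA_\J^\infty([G])$ with the LF-topology admits a dominating continuous seminorm $\tilde p$ on $\calA_\J^\infty(G)$ with $\tilde p|_{\calA_\J^\infty([G])} \geq p$. Setting $W_n := \calA_d^\infty([G])^{K_n,\J^n}$, $V_n := \calA_d^\infty(G)^{K_n,\J^n}$, and $p_n := p|_{W_n}$, I would inductively construct continuous seminorms $\tilde p_n$ on $V_n$ with $\tilde p_n|_{W_n} = p_n$ and $\tilde p_{n+1}|_{V_n} \leq \tilde p_n$; the non-increasing compatibility allows gluing to a seminorm $\tilde p(x) := \lim_{n \to \infty} \tilde p_n(x)$ on $\calA_\J^\infty(G) = \bigcup V_n$ whose restriction to each $V_n$ is bounded by the continuous $\tilde p_n$, hence continuous. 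The inductive step amounts to extending via Hahn--Banach the ``infimum'' seminorm $r(x) := \inf\{\tilde p_n(v) + p_{n+1}(w) : x = v+w,\ v \in V_n,\ w \in W_{n+1}\}$ on the subspace $V_n + W_{n+1} \subseteq V_{n+1}$: the consistency $\tilde p_n|_{W_n} = p_n = p_{n+1}|_{W_n}$ on $V_n \cap W_{n+1} = W_n$ ensures that $r$ restricts to $\tilde p_n$ on $V_n$ and to $p_{n+1}$ on $W_{n+1}$.

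The main obstacle is to verify that $r$ is continuous on $V_n + W_{n+1}$ in the subspace topology inherited from $V_{n+1}$, a necessary hypothesis for the Hahn--Banach extension. By the Open Mapping Theorem, this continuity reduces to the closedness of $V_n + W_{n+1}$ in $V_{n+1}$, equivalently to the injection of Fr\'echet quotients $V_n/W_n \hookrightarrow V_{n+1}/W_{n+1}$ being a closed embedding. I expect this closedness to follow by combining the Casselman--Wallach structure of each step (Proposition \ref{prop:sm_inf}) with the structural decomposition of elements of $V_{n+1}$ according to their $K_n$- versus $K_{n+1}$-invariant components and their polynomial expansion along the $\a_G$-action, which is nilpotent of order at most $n+1$ thanks to the standing hypothesis $\a_G \subseteq \J$ (so that $S^n(\a_G) \subseteq \J^n$ as operators on $V_{n+1}$). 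Once the closedness is established, the compatible $\tilde p_n$'s glue to the desired continuous dominating seminorm $\tilde p$, completing the proof of the topological equivalence.
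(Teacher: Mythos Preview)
Your approach is different from the paper's and, while the strategy is sound, the step you flag as ``the main obstacle'' is indeed a genuine gap that your proposed method does not fill.

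The paper's argument is much more direct: it constructs an explicit continuous retraction $\calA_\J^\infty(G)\to\calA_\J^\infty([G])$ by a weighted average over $A_G^\R$, namely
\[
(P_{A_G^\R,d_0}\varphi)(a'g):=D_0^{-1}\int_{A_G^\R}\varphi(ag)\,\norm a^{-d_0}\,da,\qquad a'\in A_G^\R,\ g\in G(\A)^1,
\]
with $d_0$ chosen so that $\int_{A_G^\R}\norm a^{-d_0}\,da<\infty$. A short seminorm estimate shows this sends each step $\calA_d^\infty(G)^{K_n,\J^n}$ continuously into $\calA_{td}^\infty([G])^{K_n,\J^n}$ for some $t$, and the hypothesis $\J\supseteq\a_G$ is used to verify that the image is still annihilated by $\J^n$. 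The existence of a continuous retraction immediately gives that $\calA_\J^\infty([G])$ is a topological direct summand, so the subspace and LF topologies coincide.

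By contrast, your inductive Hahn--Banach extension of seminorms is correct in outline once you have the closedness of $V_n+W_{n+1}$ in $V_{n+1}$, but your sketch for that closedness (``polynomial expansion along the $\a_G$-action'') does not do the job: nilpotence of the $\a_G$-action gives a filtration of $V_{n+1}$ by the subspaces $\ker(\a_G^k)$, but this filtration need not split, and in any case does not directly exhibit $V_n+W_{n+1}$ as closed. What would work here is the closed-range theorem for morphisms of Casselman--Wallach representations: the induced map $V_n/W_n\to V_{n+1}/W_{n+1}$ is a continuous $G_\infty$-equivariant map between Casselman--Wallach representations (by Proposition~\ref{prop:sm_inf} and \cite{wallachII}, Lem.~11.5.2), hence has closed image, which is precisely $(V_n+W_{n+1})/W_{n+1}$. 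So your route can be completed, but not by the mechanism you propose; and even once completed it is considerably more elaborate than the paper's averaging trick, which moreover yields the stronger conclusion that $\calA_\J^\infty([G])$ is topologically complemented in $\calA_\J^\infty(G)$.
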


\begin{proof}
	As indicated by the last sentence, we need to prove that $ \calA_\J^\infty([G]) $ is a topological subspace of $ \calA_\J^\infty(G) $. Since for every $ n\in\N $, $ \calA_d^\infty([G])^{K_n,\J^n} $ is a closed topological subspace of $ \calA_d^\infty(G)^{K_n,\J^n} $, the inclusion map provides a continuous embedding $ \calA_\J^\infty([G])=\lim_n\calA_d^\infty([G])^{K_n,\J^n}\hookrightarrow\lim_n\calA_d^\infty(G)^{K_n,\J^n}=\calA_\J^\infty(G) $. To finish the proof, we show that there exists a continuous retraction $\calA_\J^\infty(G)\to\calA_\J^\infty([G]) $, the continuity here being the subtle point:\\\\ In order to construct such a retraction, let us fix a Haar measure $ da $ on $ A_G^\R $. Since $ G(\A)=A_G^\R\times G(\A)^1 $, it follows from \cite{beuzart}, Prop.\ A.1.1(vi), using \eqref{eq:ungl}, that there exists $ r\in\N $ such that $ \int_{A_G^\R}\norm a^{-r}\,da<\infty $. 
Let $ d_0:=d+r\in\N $ and $ D_0:=\int_{A_G^\R}\norm a^{-d_0}\,da\in\R_{>0} $. We claim that the linear operator $ P_{A_G^\R,d_0}:\calA_\J^\infty(G)\to\calA_\J^\infty([G]) $, given by
	\[ \left(P_{A_G^\R,d_0}(\varphi)\right)(a'g):=D_0^{-1}\int_{A_G^\R}\varphi(ag)\,\norm a^{-d_0}\,da,\qquad a'\in A_G^\R,\ g\in G(\A)^1, \]
	is a well-defined, continuous retraction as desired. Indeed, using \eqref{eq:ungl} and \cite[Prop.\ A.1.1(i)]{beuzart}, one directly checks that for every $ n\in\N $ and $ \varphi\in\calA_d^\infty(G)^{K_n,\J^n} $, $ P_{A_G^\R,d_0}(\varphi) $ is a well-defined left-$ G(F)A_G^\R $-invariant, right-$ K_n $-invariant, smooth function $ G(\A)\to\C $ satisfying
	\begin{equation}\label{eq:017}
	XP_{A_G^\R,d_0}(\varphi)=P_{A_G^\R,d_0}(X_{\g_\infty^1}\varphi),\qquad X\in\U(\g),
	\end{equation}
	where, denoting $ G_\infty^1:=G_\infty\cap G(\A)^1 $, $ X_{\g_\infty^1} $ is the projection of $ X $ on $ \U(\g_\infty^1) $ with respect to the direct sum decomposition $ \U(\g)=\left<\a_G\right>_{\U(\g)}\oplus\U(\g_\infty^1) $. Since the latter decomposition restricts to the decomposition $ \calZ(\g)=\left<\a_G\right>_{\calZ(\g)}\oplus\calZ(\g_\infty^1) $, and $ \J\supseteq\a_G $, we have that $ \J=\left<\a_G\right>_{\calZ(\g)}\oplus\left(\J\cap\calZ(\g_\infty^1)\right) $, hence
	\[ \J^nP_{A_G^\R,d_0}(\varphi)\overset{\eqref{eq:017}}=P_{A_G^\R,d_0}\left(\left(\left(\left<\a_G\right>_{\calZ(\g)}\oplus\left(\J\cap\calZ(\g_\infty^1)\right)\right)^n\right)_{\g_\infty^1}\varphi\right)=P_{A_G^\R,d_0}\left(\left(\J\cap\calZ(\g_\infty^1)\right)^n \varphi\right)=0. \]
	Finally, by \cite[I.2.2(viii)]{moewal} and \eqref{eq:ungl} there exist $ M\in\R_{>0} $ and $ t\in\N $ such that
	\begin{equation}\label{eq:018}
	\norm g\leq M\norm{ag}^t,\qquad a\in A_G^\R,\ g\in G(\A)^1,
	\end{equation}
	hence we obtain the estimate
	\[ \begin{aligned}
	p_{td,X}(P_{A_G^\R,d_0}(\varphi))
	&\overset{\eqref{eq:017}}=\sup_{\substack{g\in G(\A)^1\\a'\in A_G^\R}}\left|\left(P_{A_G^\R,d_0}(X_{\g_\infty^1}\varphi)\right)(a'g)\right|\,\norm{a'g}^{-td}\\
	&\overset{\eqref{eq:018}}\leq\sup_{g\in G(\A)^1}D_0^{-1}\int_{A_G^\R}\left|\left(X_{\g_\infty^1}\varphi\right)(ag)\right|\,\norm{a}^{-d_0}\,da\ M^d\norm g^{-d}\\
	&\overset{\phantom{\eqref{eq:ungl}}}\leq\frac{M^d}{D_0}\sup_{g\in G(\A)^1}\int_{A_G^\R}p_{d,X_{\g_\infty^1}}(\varphi)\,\norm{ag}^d\,\norm a^{-d_0}\,\norm g^{-d}\,da\\	
	&\overset{\eqref{eq:ungl}}\leq\frac{(MC_0)^d}{d_0}\,\int_{A_G^\R}\norm a^{-r}\,da\ \cdot p_{d,X_{\g_\infty^1}}(\varphi),\qquad X\in\U(\g),\ \varphi\in\calA_d^\infty(G)^{K_n,\J^n}.
	\end{aligned} \]
	This proves that for every $ n\in\N $, $ P_{A_G^\R,d_0} $ restricts to a well-defined, continuous operator $ \calA_d^\infty(G)^{K_n,\J^n}\to\calA_{td}^\infty([G])^{K_n,\J^n} $, which obviously fixes every $ \varphi\in\calA_d^\infty([G])^{K_n,\J^n} $. Going over to the direct limit, it follows that $ P_{A_G^\R,d_0} $ is a well-defined, continuous retraction
	\[ \calA^\infty_\J(G)=\lim_{n\to\infty}\calA^\infty_d(G)^{K_n,\J^n}\to\lim_{n\to\infty}\calA^\infty_{td}([G])^{K_n,\J^n}=\calA^\infty_\J([G]), \]
	where the last equality holds by our Prop.\ \ref{prop:same_d} and Rem.\ \ref{rem:018} (which applies equally well to $\calA^\infty_\J([G])$).
\end{proof}

\subsection{LF-compatible smooth-automorphic representations and their direct sums}
It will be convenient to introduce the following notion: We will call a smooth-automorphic $ G(\A) $-representation $ (\pi,V) $ \textit{LF-compatible}, if $ V=\lim_{n\to\infty}V^{K_n,\J^n} $ topologically. Here we used the suggestive notation $V^{K_n,\J^n}$ to indicate the closed $ G_\infty $-invariant subspace of $ K_n $-invariant elements of $ V $ that are annihilated by $ \J^n $. As we have just observed, $ \calA_\J^\infty([G]) $ is an LF-compatible smooth-automorphic representation, as well as every smooth-automorphic $ G(\A) $-representation that is annihilated by a power of $ \J $, hence in particular every irreducible smooth-automorphic representation. Obviously, if $ (\pi,V) $ is an LF-compatible smooth-automorphic subrepresentation, then $ V=\lim_{n\to\infty}V^{K_n,\J^n} $ is an LF-space.\\\\
We record the following lemma, which is based on two results of Harish-Chandra and the fact that $ \calA^\infty_d(G)^{K_n,\J^n} $ is a Casselman-Wallach representation: 

\begin{lem}\label{lem:021}
	Let $ n\in\N $, and let $ \left\{V_{0,i}\right\}_{i\in I} $ be a family of $ (\g_\infty,K_\infty) $-submodules of $ \calA_\J(G)^{K_n,\J^n} $ whose sum is direct. Then, the sum of $ G_\infty $-invariant subspaces $ \overline{V_{0,i}}:=\Cl_{\calA_d^\infty(G)^{K_n,\J^n}}(V_{0,i}) $ of $ \calA_d^\infty(G)^{K_n,\J^n} $ is also direct.
\end{lem}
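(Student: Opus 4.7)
The plan is to reduce to the case of a finite index set and then to transport a putative dependence $\sum_i w_i=0$, with $w_i\in\overline{V_{0,i}}$, to the algebraic setting of the $(\g_\infty,K_\infty)$-modules $V_{0,i}$ by applying the continuous $K_\infty$-isotypic projections $E_{\rho_\infty}$ from \eqref{eq:Erhoinf}. Since directness of an arbitrary sum of subspaces is equivalent to directness of every finite subsum, and any finite subfamily of a direct family is again direct, one may assume $I$ is finite from the outset.

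The central step will be the following claim: for every $i\in I$ and every equivalence class of irreducible $K_\infty$-representations $\rho_\infty$, one has $E_{\rho_\infty}(\overline{V_{0,i}})\subseteq V_{0,i}$. To prove it, I invoke Prop.\ \ref{prop:sm_inf}, which asserts that $\calA^\infty_d(G)^{K_n,\J^n}$ is a Casselman--Wallach $G_\infty$-representation, hence admissible; consequently, its underlying $(\g_\infty,K_\infty)$-module $\calA_\J(G)^{K_n,\J^n}$ is admissible, and so is the submodule $V_{0,i}$. Thus the $\rho_\infty$-isotypic component $V_{0,i}(\rho_\infty)$ is finite-dimensional and hence closed in $\calA^\infty_d(G)^{K_n,\J^n}$. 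Since every vector in $V_{0,i}$ lies in a finite-dimensional $K_\infty$-invariant subspace of $V_{0,i}$ (as $V_{0,i}$ is $K_\infty$-finite), the integral formula \eqref{eq:Erhoinf} gives $E_{\rho_\infty}(V_{0,i})\subseteq V_{0,i}(\rho_\infty)$ at once. Continuity of $E_{\rho_\infty}$, combined with the closedness of $V_{0,i}(\rho_\infty)$, then upgrades this to $E_{\rho_\infty}(\overline{V_{0,i}})\subseteq \overline{E_{\rho_\infty}(V_{0,i})}\subseteq V_{0,i}(\rho_\infty)\subseteq V_{0,i}$.

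With the claim in hand, given $w_i\in\overline{V_{0,i}}$, $i\in I$, with $\sum_{i\in I} w_i=0$, applying $E_{\rho_\infty}$ for an arbitrary $\rho_\infty$ yields $\sum_{i\in I} E_{\rho_\infty}(w_i)=0$ inside the direct sum $\bigoplus_i V_{0,i}$, and the directness hypothesis forces $E_{\rho_\infty}(w_i)=0$ for every $i$ and every $\rho_\infty$. The second Harish-Chandra input, the classical isotypic expansion \cite{hch66}, Lem.\ 5, applied in the smooth admissible $G_\infty$-representation $\calA^\infty_d(G)^{K_n,\J^n}$, then gives the convergent expansion $w_i=\sum_{\rho_\infty} E_{\rho_\infty}(w_i)=0$ for each $i$, completing the proof.

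The delicate point will be the key claim. A priori, continuity of $E_{\rho_\infty}$ only yields $E_{\rho_\infty}(\overline{V_{0,i}})\subseteq \overline{E_{\rho_\infty}(V_{0,i})}$; passing from this closure back into $V_{0,i}$ itself is exactly what requires the admissibility supplied by the Casselman--Wallach structure of $\calA^\infty_d(G)^{K_n,\J^n}$, without which the closure could acquire extraneous $\rho_\infty$-isotypic vectors outside of $V_{0,i}$ and the whole argument would collapse.
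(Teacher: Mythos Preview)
Your proof is correct and follows essentially the same approach as the paper's: apply the continuous isotypic projections $E_{\rho_\infty}$ to a dependence relation, use that $E_{\rho_\infty}(\overline{V_{0,i}})\subseteq V_{0,i}$ together with the directness of $\bigoplus_i V_{0,i}$ to kill each projection, and then invoke \cite{hch66}, Lem.\ 5. The only cosmetic difference is that the paper obtains the key inclusion by citing \cite{varada}, Thm.\ II.7.14 (which gives $(\overline{V_{0,i}})_{(K_\infty)}=V_{0,i}$ directly), whereas you unpack this via the finite-dimensionality of $V_{0,i}(\rho_\infty)$ coming from admissibility; both routes rest on the Casselman--Wallach structure supplied by Prop.\ \ref{prop:sm_inf}.
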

\begin{proof}
	For every $ i\in I $, by Prop.\ \ref{prop:sm_inf} and \cite{varada}, Thm.\ II.7.14, $ \overline{V_{0,i}} $ is a $ G_\infty $-invariant subspace of $ \calA_d^\infty(G)^{K_n,\J^n} $, and $ \left(\overline{V_{0,i}}\right)_{(K_\infty)}=V_{0,i} $. Now, suppose that $ \sum_{i\in I}\phi_i=0 $ for some $ \phi_i\in \overline{V_{0,i}} $, where $ \phi_i=0 $ for all but finitely many $ i $. Then, for every irreducible representation $ \rho_\infty$ of ${K_\infty} $, $ \sum_{i\in I}E_{\rho_\infty}(\phi_i)=0 $. Since $ E_{\rho_\infty}(\phi_i)\in \left(\overline{V_{0,i}}\right)_{(K_\infty)}=V_{0,i} $, and the sum of $ V_{0,i} $'s is direct, it follows that $ E_{\rho_\infty}(\phi_i)=0 $ for all $ {\rho_\infty} $ and $ i\in I$. Thus, since $ \calA_d^\infty(G)^{K_n,\J^n} $ is a smooth representation of $ K_\infty $, by \cite[Lem.\ 5]{hch66} we have that $ \phi_i=\sum_{\rho_\infty}E_{\rho_\infty}(\phi_i)=0 $ for every $ i\in I $.
\end{proof}

The next result on LF-compatible smooth-automorphic subrepresentations will be crucial.

\begin{prop}\label{prop:108}
	Let $ V $ be an LF-compatible smooth-automorphic subrepresentation, and denote $ V_0:=V_{(K_\infty)} $. Suppose that 
	\begin{equation}\label{eq:101}
	V_0=\bigoplus_{i\in I}V_{0,i}
	\end{equation}
	for some $ (\g_\infty,K_\infty,G(\A_f)) $-submodules $ V_{0,i}\subseteq V_0 $, and denote $ V_i:=\Cl_{\calA^\infty_\J(G)}(V_{0,i}) $. Then, we have the following decomposition into a locally convex direct sum of LF-compatible smooth-automorphic subrepresentations:
	\begin{equation}\label{eq:105}
	V=\bigtoplus_{i\in I}V_i.
	\end{equation}
	In particular, for every $ i\in I $, we have
	\begin{equation}\label{eq:106}
	(V_i)_{(K_\infty)}=V_{0,i},
	\end{equation}
	hence, if the $ (\g_\infty,K_\infty,G(\A_f)) $-module $ V_{0,i} $ is irreducible, then so is the $ G(\A) $-representation $ V_i $. Moreover,
	\begin{equation}\label{eq:104}
	V_i=\lim_{n\to\infty}V_i^{K_n,\J^n}=\lim_{n\to\infty}\overline{V_{0,i}^{K_n,\J^n}},
	\end{equation}
	where $ \overline{V_{0,i}^{K_n,\J^n}}:=\Cl_{\calA_d^\infty(G)^{K_n,\J^n}}\left(V_{0,i}^{K_n,\J^n}\right) $. For every fixed $ n\in\N $,
	\begin{equation}\label{eq:107}
	V_i^{K_n,\J^n}=V_{0,i}^{K_n,\J^n}=0\qquad\text{for all but finitely many }i\in I.
	\end{equation}
\end{prop}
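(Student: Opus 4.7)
The plan is to reduce the problem to a finite-stage analysis inside each Casselman-Wallach step $V^{K_n,\J^n}$, using Lemma \ref{lem:cass_wall_reps}.\eqref{lem:cass_wall_reps:2} and Lemma \ref{lem:021}, and then to pass to the inductive limit.

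Fix $n\in\N$. Since $V^{K_n,\J^n}$ is a closed $G_\infty$-invariant subspace of the Casselman-Wallach representation $\calA_d^\infty(G)^{K_n,\J^n}$ (Prop.\ \ref{prop:sm_inf}), it is itself a Casselman-Wallach representation of $G_\infty$, so its underlying $(\g_\infty,K_\infty)$-module $V_0^{K_n,\J^n}$ is admissible and finitely generated. Intersecting \eqref{eq:101} with $V^{K_n,\J^n}$ yields $V_0^{K_n,\J^n}=\bigoplus_{i\in I}V_{0,i}^{K_n,\J^n}$; finite generation forces all but finitely many summands to vanish, which proves the $V_{0,i}^{K_n,\J^n}$-part of \eqref{eq:107}. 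Let $F(n)\subseteq I$ be the finite index set for which $V_{0,i}^{K_n,\J^n}\neq 0$. Next, I would establish the topological decomposition
\[
V^{K_n,\J^n}\;=\;\bigoplus_{i\in F(n)}\overline{V_{0,i}^{K_n,\J^n}}.
\]
Directness of the sum is Lemma \ref{lem:021}. By Thm.\ \ref{thm:005} and Lem.\ \ref{lem:cass_wall_reps}.\eqref{lem:cass_wall_reps:1}, each $\overline{V_{0,i}^{K_n,\J^n}}$ is a Casselman-Wallach subrepresentation of $V^{K_n,\J^n}$ whose underlying $(\g_\infty,K_\infty)$-module is $V_{0,i}^{K_n,\J^n}$; the finite external direct sum on the right is then itself Casselman-Wallach with underlying $(\g_\infty,K_\infty)$-module equal to $V_0^{K_n,\J^n}$. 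Lemma \ref{lem:cass_wall_reps}.\eqref{lem:cass_wall_reps:2} identifies the canonical continuous $G_\infty$-map from the external direct sum into $V^{K_n,\J^n}$ as an isomorphism of Casselman-Wallach representations.

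This finite-stage decomposition supplies continuous $G_\infty$-equivariant projections $p_i^{(n)}\colon V^{K_n,\J^n}\to\overline{V_{0,i}^{K_n,\J^n}}$. For $\varphi\in V^{K_n,\J^n}\subseteq V^{K_{n+1},\J^{n+1}}$, the compatibility $\overline{V_{0,i'}^{K_n,\J^n}}\subseteq\overline{V_{0,i'}^{K_{n+1},\J^{n+1}}}$ and the uniqueness of the direct sum decomposition in the ambient step force $p_i^{(n)}=p_i^{(n+1)}\big|_{V^{K_n,\J^n}}$, so the $p_i^{(n)}$ assemble, via the universal property of $V=\lim_n V^{K_n,\J^n}$, into a continuous map $P_i\colon V\to V_i$ which is the identity on every $V_{0,i}^{K_n,\J^n}$, hence on the dense submodule $V_{0,i}$, and therefore by continuity on all of $V_i$. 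Having a continuous retraction of the inclusion $V_i\hookrightarrow V$ at hand, the inclusion $V_i^{K_n,\J^n}=P_i(V_i^{K_n,\J^n})\subseteq\overline{V_{0,i}^{K_n,\J^n}}$ is immediate, and combining it with the trivial reverse inclusion proves \eqref{eq:104} and completes \eqref{eq:107}; the same retraction forces the subspace and inductive-limit topologies on $V_i$ to agree, giving LF-compatibility of $V_i$. Passing to $K_\infty$-finite vectors in \eqref{eq:104} gives $(V_i)_{(K_\infty)}=V_{0,i}$, i.e.\ \eqref{eq:106}, and irreducibility of $V_i$ under the irreducibility hypothesis on $V_{0,i}$ follows via Thm.\ \ref{thm:005}.

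The locally convex direct sum decomposition \eqref{eq:105} is then formal: the canonical $G(\A)$-equivariant map $\Phi\colon\bigtoplus_{i\in I}V_i\to V$ is continuous by the universal property of the locally convex direct sum, is bijective by the finite-stage decomposition above, and has continuous inverse because its restriction to each step $V^{K_n,\J^n}=\bigoplus_{i\in F(n)}V_i^{K_n,\J^n}$ is the obvious continuous finite direct-sum map into $\bigtoplus_{i\in I}V_i$. The main obstacle I anticipate is the spanning half of the finite-stage decomposition of $V^{K_n,\J^n}$: \textit{a priori} a finite sum of closed subspaces in a Fréchet space need not fill out any natural ambient subspace, and it is precisely the uniqueness of Casselman-Wallach completions (Lem.\ \ref{lem:cass_wall_reps}.\eqref{lem:cass_wall_reps:2}) that forces the equality $V^{K_n,\J^n}=\bigoplus_{i\in F(n)}\overline{V_{0,i}^{K_n,\J^n}}$.
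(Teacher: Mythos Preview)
Your argument is correct and follows essentially the same route as the paper: establish the finite-stage topological decomposition $V^{K_n,\J^n}=\bigoplus_{i\in F(n)}\overline{V_{0,i}^{K_n,\J^n}}$ via Lemma~\ref{lem:021} and the uniqueness of Casselman--Wallach completions, then pass to the inductive limit (the paper does this by swapping the order of $\lim_n$ and $\bigtoplus_i$, you do it by assembling compatible projections, but these are equivalent). One small point: at the finite-stage level you are working with $G_\infty$-representations and $(\g_\infty,K_\infty)$-modules, so the correct citations are the archimedean results underlying Lemma~\ref{lem:cass_wall_reps} and Theorem~\ref{thm:005} (namely \cite{wallachII}, Lem.~11.5.2, Thm.~11.6.7(2), and \cite{varada}, Thm.~II.7.14), rather than those global statements themselves, which are formulated for $G(\A)$ and $(\g_\infty,K_\infty,G(\A_f))$-modules.
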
  

\begin{proof}
	Let us first show that for every fixed $ n\in\N $,
	\begin{equation}\label{eq:100}
		V_{0,i}^{K_n,\J^n}=0\qquad\text{for all but finitely many }i\in I.	
	\end{equation}
	Since by \eqref{eq:101}
	\begin{equation}\label{eq:102}
		V_0^{K_n,\J^n}=\bigoplus_{i\in I}V_{0,i}^{K_n,\J^n},
	\end{equation}
	it suffices to prove that the $ (\g_\infty,K_\infty) $-module $ V_0^{K_n,\J^n} $ is finitely generated, which holds by \cite{vogan}, Cor.\ 5.4.16, since $ V_0^{K_n,\J^n}\subseteq\calA_\J(G)^{K_n,\J^n} $ is obviously $ \calZ(\g) $-finite and is admissible by \cite{bojac}, \S 4.3(i).
	
	Next, by Lem.\ \ref{lem:008}, $ V_i $ is a smooth-automorphic subrepresentation for every $ i\in I $. To prove the proposition, it remains to prove that 
	\begin{equation}\label{eq:103}
	V=\bigtoplus_{i\in I}\lim_{n\to\infty}\overline{V_{0,i}^{K_n,\J^n}}.
	\end{equation}
	Indeed, \eqref{eq:103} implies that $ V_i=\lim_{n\to\infty}\overline{V_{0,i}^{K_n,\J^n}} $ for every $ i\in I $ and that \eqref{eq:105} holds; \eqref{eq:105} and \eqref{eq:101} imply \eqref{eq:106}; Thm.\ \ref{thm:005} -- or, alternatively, a combination of \eqref{eq:106} and \cite{hch66}, Lem.\ 4 -- implies that $ V_i $ is irreducible whenever $ V_{0,i} $ is; moreover, \eqref{eq:106} implies that for every $ n\in\N $, $ \left(V_i^{K_n,\J^n}\right)_{(K_\infty)}=V_{0,i}^{K_n,\J^n} $, hence by \cite{hch66}, Lem.\ 4, $ V_i^{K_n,\J^n}=\overline{V_{0,i}^{K_n,\J^n}} $, which finishes the proof of \eqref{eq:104} and, by \eqref{eq:100}, \eqref{eq:107}.

	Therefore, we are left to prove \eqref{eq:103}. The locally convex direct sum on the right-hand side of \eqref{eq:103} is well-defined since by Lem.\ \ref{lem:021}, for every $ n\in\N $ the sum of subspaces $ \overline{V_{0,i}^{K_n,\J^n}}\subseteq\calA_d^\infty(G)^{K_n,\J^n} $ is direct. Next, we have
	\begin{equation}\label{eq:109}
	\bigtoplus_{i\in I}\lim_{n\to\infty}\overline{V_{0,i}^{K_n,\J^n}}=\lim_{n\to\infty}\bigtoplus_{i\in I}\overline{V_{0,i}^{K_n,\J^n}},
	\end{equation}
	which is obvious as an equality of vector spaces, and the topology of both sides is easily seen to be the finest locally convex topology with respect to which the inclusion maps of the subspaces $ \overline{V_{0,i}^{K_n,\J^n}} $ are continuous (a detail, which we leave to the reader). To prove \eqref{eq:103}, by \eqref{eq:109} and the LF-compatibility of $ V $ it suffices to prove that for every $ n\in\N $,
	\begin{equation}\label{eq:110}
	V^{K_n,\J^n}=\bigtoplus_{i\in I}\overline{V_{0,i}^{K_n,\J^n}}.
	\end{equation}
	By Prop.\ \ref{prop:sm_inf}, \cite{wallachII}, Lem.\ 11.5.2, and \cite{varada}, Thm.\ II.7.14, for every $ i\in I $, $ \overline{V_{0,i}^{K_n,\J^n}} $ is a Casselman-Wallach representation of $ G_\infty $ and $ \left(\overline{V_{0,i}^{K_n,\J^n}}\right)_{(K_\infty)}=V_{0,i}^{K_n,\J^n} $.
	By \cite{hch66}, Lem.\ 4, \eqref{eq:100} and \eqref{eq:102}, it follows that $ \bigtoplus_{i\in I}\overline{V_{0,i}^{K_n,\J^n}} $ is a Casselman-Wallach representation of $ G_\infty $ having $ V_0^{K_n,\J^n} $ as its $ (\g_\infty,K_\infty) $-module of $ K_\infty $-finite vectors. Since by Prop.\ \ref{prop:sm_inf} and \cite{wallachII}, Lem.\ 11.5.2, the same holds for $ V^{K_n,\J^n} $, by \cite{wallachII}, Thm.\ 11.6.7(2) the identity map on $ V_0^{K_n,\J^n} $ extends to a $ G_\infty $-equivalence
	\[ \iota:\bigtoplus_{i\in I}\overline{V_{0,i}^{K_n,\J^n}}\to V^{K_n,\J^n}. \]
	By \cite{hch66}, Lem.\ 4 (and the uniqueness of continuous extensions from a dense subspace), $ \iota $ coincides with the (obviously continuous) inclusion map, hence $ \iota $ is the identity map. This proves \eqref{eq:110} and hence the proposition.

\end{proof}

\subsection{The parabolic support of a smooth-automorphic form}
Let $ \calA_{cusp}^\infty([G]) $ denote the space of functions $ \varphi\in\calA^\infty([G]):=\bigcup_{\J}\calA_\J^\infty([G]) $ that are \textit{cuspidal}, i.e., satisfy $ \varphi_P=0 $ for every proper parabolic $ F $-subgroup $ P $ of $ G $.
We say a function $f\in C^\infty_{umg}(G(F)\backslash G(\A))$ is \textit{negligible} along a parabolic $ F $-subgroup $Q=LN$ of $ G $, if
\begin{equation}\label{eq:113}
\lambda_{Q,g,\varphi}(f):=\int_{L(F)\backslash L(\A)^1} f_Q(lg) \,\overline{\varphi(l)} \,dl =0,\qquad \forall g\in G(\A),\ \forall \varphi\in \calA_{cusp}^\infty([L]).
\end{equation}

\begin{rem}\label{rem:negl}
Usually negligibility is defined as an orthogonality-relation with respect to all cuspidal automorphic forms, i.e., one supposes that \eqref{eq:113} only holds for all $\phi\in \calA_{cusp}([L])$. See \cite{bls}, \S 2.2, \cite{borel_co}, \S 6.7, \cite{franke_schwermer}, \S 1.1 and (most explicitly) \cite{osborne-warner}, p.\ 82 (which refers to Langlands's original work \cite{langlandsLNM}). In course of proving Thm.\ \ref{thm:112} below, we will show that our definition is in fact equivalent to the common one, but has the advantage to be intrinsic to the notion of smooth-automorphic forms. 
\end{rem}
For every $ P\in\calP $, let $ \left\{P\right\} $ denote the associate class of $ P $, i.e., the set of parabolic $ F $-subgroups $ Q=L_QN_Q $ of $ G $ such that $ L_Q $ is conjugate to $ L_P $ by an element of $ G(F) $. We define $ \calA^\infty_{\J,\P}([G])$ to be the space of smooth-automorphic forms $\varphi\in \calA^\infty_\J([G])$, which are negligible along all $Q\notin\P$. The following theorem is our first main result:

\begin{thm}\label{thm:112}
	We have the following, $G(\A)$-equivariant decomposition into a locally convex direct sum of LF-compatible smooth-automorphic subrepresentations:
	\[ \calA^\infty_\J([G])=\bigtoplus_{\left\{P\right\}}\calA^\infty_{\J,\P}([G]). \]
\end{thm}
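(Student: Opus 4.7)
The plan is to use Prop.\ \ref{prop:108} to transfer the classical algebraic Langlands decomposition of the $(\g_\infty,K_\infty,G(\A_f))$-module $\calA_\J([G])$ to the LF-space $\calA^\infty_\J([G])$, and then to identify the resulting closures with the spaces defined via smooth negligibility. Recall Langlands's classical algebraic decomposition (cf.\ \cite{moewal}, III.2.6, or \cite{langlands72})
\[
\calA_\J([G])=\bigoplus_{\{P\}}\calA_{\J,\{P\}}([G])
\]
as $(\g_\infty,K_\infty,G(\A_f))$-modules, where $\calA_{\J,\{P\}}([G])$ consists of the $K_\infty$-finite automorphic forms which are negligible along every $Q\notin\{P\}$ in the classical sense, i.e., tested only against $K_\infty$-finite cuspidal automorphic forms on the Levi. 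Since $\calA^\infty_\J([G])$ is LF-compatible by its very definition in \eqref{eq:029} and satisfies $\calA^\infty_\J([G])_{(K_\infty)}=\calA_\J([G])$, Prop.\ \ref{prop:108} immediately yields the locally convex direct sum
\[
\calA^\infty_\J([G])=\bigtoplus_{\{P\}}W_{\{P\}},\qquad W_{\{P\}}:=\Cl_{\calA^\infty_\J(G)}\!\bigl(\calA_{\J,\{P\}}([G])\bigr),
\]
of LF-compatible smooth-automorphic subrepresentations satisfying $(W_{\{P\}})_{(K_\infty)}=\calA_{\J,\{P\}}([G])$.

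What remains is to show $W_{\{P\}}=\calA^\infty_{\J,\{P\}}([G])$. For the inclusion $\calA^\infty_{\J,\{P\}}([G])\subseteq W_{\{P\}}$, I would argue that for every $\varphi\in\calA^\infty_{\J,\{P\}}([G])$ each $K_\infty$-isotypic projection $E_{\rho_\infty}(\varphi)$ already lies in $\calA_{\J,\{P\}}([G])$. Indeed, $E_{\rho_\infty}(\varphi)$ is a $K_\infty$-finite, $\J$-finite, smooth function of uniform moderate growth, hence an automorphic form; and since the constant term commutes with right translation by $K_\infty$, Fubini yields, for any $Q\notin\{P\}$ and $\phi\in\calA_{cusp}([L_Q])\subseteq\calA^\infty_{cusp}([L_Q])$,
\[
\lambda_{Q,g,\phi}\bigl(E_{\rho_\infty}(\varphi)\bigr)=d(\rho_\infty)\int_{K_\infty}\overline{\xi_{\rho_\infty}(k_\infty)}\,\lambda_{Q,gk_\infty,\phi}(\varphi)\,dk_\infty=0.
\]
Because the $K_\infty$-action on each Fréchet step $\calA^\infty_d([G])^{K_n,\J^n}$ is smooth, the Peter-Weyl expansion $\varphi=\sum_{\rho_\infty}E_{\rho_\infty}(\varphi)$ converges in that step (cf.\ \cite{hch66}, Lem.\ 5), which places $\varphi$ in the closure $W_{\{P\}}$.

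For the reverse inclusion $W_{\{P\}}\subseteq\calA^\infty_{\J,\{P\}}([G])$, the plan is to show that for each $Q\notin\{P\}$, $g\in G(\A)$ and $\phi\in\calA^\infty_{cusp}([L_Q])$, the functional $\lambda_{Q,g,\phi}$ is continuous on $\calA^\infty_\J([G])$ and vanishes on the dense subspace $\calA_{\J,\{P\}}([G])$. Continuity reduces, via the strict inductive limit topology, to a bound on each Fréchet step: the constant-term operator $\varphi\mapsto\varphi_Q$ is continuous into a space of functions of uniform moderate growth on $L(\A)$, while the smooth cuspidality of $\phi$ (realized in a Fréchet step of $\calA^\infty_{cusp}([L_Q])$ via Prop.\ \ref{prop:same_d} applied to $L_Q$) yields rapid decay, making the defining integral absolutely convergent and dominated by an appropriate seminorm in $\varphi$. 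Vanishing on $\calA_{\J,\{P\}}([G])$ then follows by approximating $\phi$ by its own $K_\infty$-isotypic projections $\phi_m\in\calA_{cusp}([L_Q])$: for $f\in\calA_{\J,\{P\}}([G])$ one has $\lambda_{Q,g,\phi_m}(f)=0$ by the classical definition of negligibility, and passing to the limit in $m$ by continuity in $\phi$ gives $\lambda_{Q,g,\phi}(f)=0$.

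The main obstacle is the analytic groundwork needed in the last paragraph, namely promoting the classical rapid-decay estimate for $K_\infty$-finite cuspidal automorphic forms to an effective, seminorm-type bound for smooth cuspidal automorphic forms that is uniform on Fréchet steps. This same estimate also vindicates Rem.\ \ref{rem:negl}, reconciling the authors' definition of smooth negligibility with the classical one.
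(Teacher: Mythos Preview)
Your outline is correct and follows essentially the same strategy as the paper: apply Prop.\ \ref{prop:108} to Langlands's algebraic decomposition, then identify each closure $W_{\{P\}}$ with $\calA^\infty_{\J,\{P\}}([G])$ by showing that the functionals $\lambda_{Q,g,\phi}$ are continuous on $\calA^\infty_\J([G])$. Two small differences are worth noting. First, for the inclusion $\calA^\infty_{\J,\{P\}}([G])\subseteq W_{\{P\}}$ the paper avoids your isotypic-projection argument by invoking Langlands's decomposition already at the level of $C^\infty_{umg}(G(F)\backslash G(\A))$ (cf.\ \cite{bls}, Thm.\ 2.4, or \cite{casselman_schwartz}), which immediately gives the vector-space equality $\calA^\infty_\J([G])=\bigoplus_{\{P\}}\calA^\infty_{\J,\{P\}}([G])$; comparing this with the topological decomposition from Prop.\ \ref{prop:108}, only the inclusion $W_{\{P\}}\subseteq\calA^\infty_{\J,\{P\}}([G])$ needs proof. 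Second, the ``main obstacle'' you flag is exactly the content of the paper's Steps~1--2: the paper makes the required continuity in $\phi$ precise by passing to the global Schwartz space $\mathcal S([L])$, establishes the bound $|\lambda_{Q,g,\phi}(f)|\leq C\,p_{d,1}(f)\,q_{-d,1}(\phi)$, and then shows (via the open mapping theorem) that each Fr\'echet step $\calA^\infty_{cusp,\J}([L])^{K_{L,n},\J^n}$ coincides \emph{topologically} with the corresponding Schwartz step $\mathcal S_{cusp}([L])^{K_{L,n},\J^n}$, so that the $K_\infty$-finite approximation you need is available in a topology where $\lambda_{Q,g,\bullet}(f)$ is continuous.
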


\begin{proof}
We proceed in several steps.\\
{\it Step 1}: Let $K_{L,n}:=K_n\cap L(\A_f)$. For given $ f\in C^\infty(L(F)A^\infty_Q\backslash L(\A)) $, $ d\in\Z $ and $ X\in\U(\l) $, let us abbreviate
 \[ q_{d,X}(f):=\sup_{l\in L(\A)^1}\absl{(Xf)(l)}\,\norm l_{[L]}^{-d} \]
where $ \norm l_{[L]}:=\inf_{\gamma\in L(F)}\norm{\gamma l} $ and define $ \mathcal S([L])^{K_{L,n}} $ to be the space, which consists of all functions $ f\in C^\infty(L(F)A_Q^\R\backslash L(\A))^{K_{L,n}} $ such that  $q_{d,X}(f)<\infty$ for all $ d\in\Z $ and $ X\in\U(\l) $. We equip $ \mathcal S([L])^{K_{L,n}} $ with the Fréchet topology generated by the seminorms $q_{d,X}$ and define the global Schwartz space
\[ \mathcal S([L]):=\lim_{n\to\infty}\mathcal S([L])^{K_{L,n}}. \]
As for any compact set $ C\subseteq N(\A) $ such that $ N(\A)=N(F)C $ and $d=d_0\in \N$ as in \S \ref{sect:LF-autom}
\[ \absl{f_Q(lg)}\leq\int_C\absl{f(nlg)}\,dn\leq p_{d,1}(f)\,\int_C\norm{nlg}^d\,dn \overset{\ref{eq:ungl}}\leq C_0^{2d}\ p_{d,1}(f)\,\norm l^d\,\norm g^d\,\int_C\norm n^d\,dn,\quad l\in L(\A)^1, \]
	we have
	\[ \begin{aligned}
	\int_{L(F)\backslash L(\A)^1} \absl{f_Q(lg) \,\varphi(l)} \,dl	
	&\leq C_0^{2d}\,p_{d,1}(f)\,\norm g^d\,\int_C\norm n^d\,dn\,\int_{L(F)\backslash L(\A)^1}\absl{\varphi(l)}\norm{l}_{[L]}^d\,dl\\
	&\leq \left(C_0^{2d}\,\norm g^d\,\int_C\norm n^d\,dn\,\vol(L(F)\backslash L(\A)^1)\right)\ \,p_{d,1}(f) \,q_{-d,1}(\varphi)
	\end{aligned} \]
for all $ f\in \calA^\infty_\J([G])$ and for all $\varphi\in \mathcal S([L])$. Hence, we have shown that for each fixed $g\in G(\A)$, the function $\lambda_{Q,g,\varphi}(f)$ is separately continuous in the the arguments $ f\in \calA^\infty_\J([G])$ and $\varphi\in \mathcal S([L])$.\\\\
{\it Step 2}: As announced in Rem.\ \ref{rem:negl}, we will now show that our definition of negligibility coincides with the usual (weaker) one, i.e., we will prove that a $ f\in C^\infty_{umg}(G(F)\backslash G(\A)) $ is negligible along a parabolic $ F $-subgroup $Q=LN$ of $ G $ if and only if $\lambda_{Q,g,\phi}(f)=0$ for all $g\in G(\A)$ and all $\phi\in\calA_{cusp}([L])$. Necessity being obvious, we show sufficiency: Suppose $\lambda_{Q,g,\phi}(f)=0$ for all $g\in G(\A)$ and all $\phi\in\calA_{cusp}([L])$, i.e., $ \calA_{cusp}([L])\subseteq\ker\lambda_{Q,g,\bullet}(f) $ for every $ g\in G(\A) $. By continuity of $ \lambda_{Q,g,\bullet}(f): \mathcal S([L])\ra \C $, as shown in Step 1 above, it follows that $ \Cl_{\mathcal S([L])}(\calA_{cusp}([L]))\subseteq\ker\lambda_{Q,g,\bullet}(f)$ for every $ g\in G(\A) $. Thus, it suffices to prove that for every $ g\in G(\A) $,
	\begin{equation}\label{eq:302}
	\mathcal A^\infty_{cusp}([L])\subseteq\Cl_{\mathcal S([L])}(\calA_{cusp}([L])). 
	\end{equation}
	To this end, note that for every ideal $ \J $ in $ \calZ(\l) $ and $ n\in\N $, we have an equality of Fréchet spaces
	\begin{equation}\label{eq:303}
	\calA^\infty_{cusp,\J}([L])^{K_{L,n},\J^n}=\mathcal S_{cusp}([L])^{K_{L,n},\J^n},
	\end{equation}
$ \mathcal S_{cusp}([L])^{K_{L,n},\J^n}$ (resp.\ $\calA^\infty_{cusp,\J}([L])^{K_{L,n},\J^n}$) denoting the subspace of cuspidal, $\J^n$-annihilated functions in $ \mathcal S([L])^{K_{L,n}}$ (resp.\ $\calA^\infty_{\J}([L])^{K_{L,n}}$). Indeed, it is a simple consequence of \cite{moewal}, Cor.\ I.2.11 and I.2.2.(vi), {\it ibidem}, that these two spaces coincide as sets. Moreover, for every parabolic $ F $-subgroup $ Q'=L'N' $ of $ L $, $ l\in L(\A) $ and $ Y\in\J^n $, the inequalities 

\begin{itemize}
	\item[] $ \displaystyle \absl{f_{Q'}(l)}\leq\int_{N'(F)\backslash N'(\A)}\absl{f(n'l)}\,dn'\leq \norm f_\infty=q_{0,1}(f) $ for all $ f\in\mathcal S([L])^{K_{L,n}} $
	\item[] $ \displaystyle q_{d,X}(Yf)=\sup_{l\in L(\A)^1}\absl{(XYf)(l)}\norm l_{[L]}^{-d}=q_{d,XY}(f) $ for all $ d\in\Z $, $ X\in\U(\l) $ and $ f\in\mathcal S([L])^{K_{L,n}} $	
\end{itemize}
imply that the assignments $ f\mapsto f_{Q'}(l) $ and $ f\mapsto Yf $ define continuous linear operators $ \mathcal S([L])^{K_{L,n}}\to\C $ and $ \mathcal S([L])^{K_{L,n}}\to\mathcal S([L])^{K_{L,n}} $, respectively. Thus, the intersection $ \mathcal S_{cusp}([L])^{K_{L,n},\J^n} $ of their kernels is a closed subspace of $ \mathcal S([L])^{K_{L,n}} $ and hence Fr\'echet. As the identity map $\mathcal S_{cusp}([L])^{K_{L,n},\J^n}\ra \calA^\infty_{cusp,\J}([L])^{K_{L,n},\J^n}$ is obviously continuous, the claimed equality \eqref{eq:303} of Fr\'echet spaces finally follows from the open mapping theorem. Thus,  writing $\calA_{cusp,\J}([L])^{K_{L,n},\J^n}:=\calA^\infty_{cusp,\J}([L])_{(K_\infty)}^{K_{L,n},\J^n}$, we have
	\[ \begin{aligned}
	\mathcal A^\infty_{cusp}([L])
	&=\bigcup_{n,\J}\calA^\infty_{cusp,\J}([L])^{K_{L,n},\J^n}\\
	&=\bigcup_{n,\J}\Cl_{\calA^\infty_{cusp,\J}([L])^{K_{L,n},\J^n}}(\calA_{cusp,\J}([L])^{K_{L,n},\J^n})\\	
	&\overset{\eqref{eq:303}}=\bigcup_{n,\J}\Cl_{\mathcal S_{cusp}([L])^{K_{L,n},\J^n}}(\calA_{cusp,\J}([L])^{K_{L,n},\J^n})\\		
	&\subseteq\bigcup_{n,\J}\Cl_{\mathcal S([L])}(\calA_{cusp,\J}([L])^{K_{L,n},\J^n})\\		
	&\subseteq\Cl_{\mathcal S([L])}\left(\bigcup_{n,\J}\calA_{cusp,\J}([L])^{K_{L,n},\J^n}\right)\\			
	&=\Cl_{\mathcal S([L])}(\calA_{cusp}([L])).
	\end{aligned} \]
This proves \eqref{eq:302} and hence that a function $ f\in C^\infty_{umg}(G(F)\backslash G(\A)) $ is negligible along a parabolic $ F $-subgroup $Q=LN$ of $ G $ if and only if $\lambda_{Q,g,\phi}(f)=0$ for all $g\in G(\A)$ and all $\phi\in\calA_{cusp}([L])$.\\\\
{\it Step 3}: Step 2 now allows us to use Langlands's algebraic direct sum decomposition
\begin{equation}\label{eq:111}
C^\infty_{umg}(G(F)\backslash G(\A))=\bigoplus_{\left\{P\right\}}C^\infty_{umg,\P}(G(F)\backslash G(\A)),
\end{equation}
where $C^\infty_{umg,\P}(G(F)\backslash G(\A))$ denotes the space of functions $f\in C^\infty_{umg}(G(F)\backslash G(\A))$ that are negligible along all $Q\notin\P$. See \cite{bls}, Thm.\ 2.4, or \cite{casselman_schwartz}, Thm.\ 1.16 and Cor.\ 4.7 for a proof. Since the spaces $C^\infty_{umg,\P}(G(F)\backslash G(\A))$ are invariant under the action of $ G(\A) $ and $ \U(\g) $ by right translation, \eqref{eq:111} implies that
\begin{equation}\label{eq:115}
\calA^\infty_\J([G])=\bigoplus_{\left\{P\right\}}\calA^\infty_{\J,\P}([G])
\end{equation}
as a $G(\A)$-equivariant decomposition of vector spaces and
\begin{equation}\label{eq:112}
 \calA_\J([G])=\bigoplus_{\left\{P\right\}}\calA_{\J,\P}([G]), 
\end{equation}
as a decomposition of $ (\g_\infty,K_\infty,G(\A_f)) $-modules, where $ \calA_{\J,\P}([G]):=\calA_{\J,\P}^\infty([G])_{(K_\infty)} $. See also \cite{franke_schwermer}, 1.1. It hence follows from our Prop.\ \ref{prop:108} that we have the following decomposition into a locally convex direct sum of LF-compatible smooth-automorphic subrepresentations:
	\begin{equation}\label{eq:114}
	\calA^\infty_\J([G])=\bigtoplus_{\left\{P\right\}}\Cl_{\calA^\infty_\J([G])}(\calA_{\J,\P}([G])).
	\end{equation}
Therefore, it remains to prove that for every $ \left\{P\right\} $, $ \Cl_{\calA^\infty_\J([G])}(\calA_{\J,\P}([G]))=\calA^\infty_{\J,\left\{P\right\}}([G]) $. 
By \eqref{eq:114} and \eqref{eq:115} it suffices to prove that $ \Cl_{\calA^\infty_\J([G])}(\calA_{\J,\P}([G]))\subseteq\calA^\infty_{\J,\left\{P\right\}}([G]) $. We recall from Step 1 above that the linear functional $ \lambda_{Q,g,\varphi}:\calA^\infty_\J([G])\to\C $ is continuous for every $ F $-parabolic subgroup $ Q=LN $ of $ G $, $ g\in G(\A) $ and $\varphi\in \mathcal S([L])$, hence, by \eqref{eq:303}, in particular for every $ \varphi\in\calA^\infty_{cusp}([L]) \subseteq \mathcal S([L])$. Therefore, $ \calA_{\J,\P}^\infty([G]) $ is closed in $ \calA^\infty_\J([G]) $. This shows the claim.
\end{proof}

\subsection{Cuspidal smooth-automorphic forms}
We will now consider the subspace $ \calA^\infty_{cusp,\J}([G]) $ of cuspidal functions in $ \calA_{\J}^\infty([G]) $ more closely. It is well-known (cf.\ combine Step 2 in the proof of Thm.\ \ref{thm:112} with \cite{bls}, Prop.\ 2.3
) that 
$$\calA^\infty_{cusp,\J}([G]) = \calA^\infty_{\J,\left\{G\right\}}([G]).$$
Thus, Thm.\ \ref{thm:112} implies the following

\begin{cor}\label{cor:111}
	The space of cuspidal smooth-automorphic forms $ \calA_{cusp,\J}^\infty([G]) $ is an LF-compatible smooth-automorphic subrepresentation.
\end{cor}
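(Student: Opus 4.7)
The plan is to identify the space of cuspidal smooth-automorphic forms $\calA^\infty_{cusp,\J}([G])$ with the particular associate-class summand $\calA^\infty_{\J,\{G\}}([G])$ from the parabolic-support decomposition of Thm.\ \ref{thm:112}, and then read off the desired conclusion directly from that theorem. Since the associate class of $G$ contains only $G$ itself, $\calA^\infty_{\J,\{G\}}([G])$ is, by definition, the subspace of $\varphi \in \calA^\infty_\J([G])$ that are negligible along every proper parabolic $F$-subgroup $Q=LN$ of $G$.

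For the inclusion $\calA^\infty_{cusp,\J}([G]) \subseteq \calA^\infty_{\J,\{G\}}([G])$, I would simply observe that if $\varphi$ is cuspidal, then $\varphi_Q=0$ for every proper parabolic $Q$, so $\lambda_{Q,g,\phi}(\varphi)=0$ trivially for all $g \in G(\A)$ and all $\phi \in \calA^\infty_{cusp}([L])$. The reverse inclusion is the substantive point, and I would deduce it by combining \cite{bls}, Prop.\ 2.3 (which, for any non-vanishing constant term $\varphi_Q$, supplies a cuspidal form on $L$ pairing non-trivially with it) together with Step 2 of the proof of Thm.\ \ref{thm:112} (which confirms that our intrinsic notion of negligibility, stated using smooth-automorphic cuspidal test functions, is equivalent to the classical one using $K_\infty$-finite cuspidal forms).

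With the identification $\calA^\infty_{cusp,\J}([G]) = \calA^\infty_{\J,\{G\}}([G])$ in hand, the corollary is immediate from Thm.\ \ref{thm:112}, since each summand $\calA^\infty_{\J,\{P\}}([G])$ in the parabolic decomposition is already asserted there to be an LF-compatible smooth-automorphic subrepresentation. I do not anticipate any genuine obstacle, as all the delicate topology (closedness of the functionals $\lambda_{Q,g,\phi}$ via Step 1, the reduction of our intrinsic negligibility to the classical formulation via Step 2, and the appeal to Prop.\ \ref{prop:108}) was already settled in the course of establishing Thm.\ \ref{thm:112}; the corollary is essentially a one-line extraction of the $\{G\}$-summand from that theorem.
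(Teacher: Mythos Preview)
Your proposal is correct and matches the paper's approach exactly: the paper also identifies $\calA^\infty_{cusp,\J}([G]) = \calA^\infty_{\J,\{G\}}([G])$ by combining Step~2 of the proof of Thm.~\ref{thm:112} with \cite{bls}, Prop.~2.3, and then reads off the conclusion from Thm.~\ref{thm:112}.
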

It is the goal of this subsection to give a refined description of the smooth-automorphic $G(\A)$-representation $ \calA_{cusp,\J}^\infty([G]) $ as a countable locally convex direct sum of irreducible subrepresentations. \\\\ 
First, to settle terminology, we will call a smooth-automorphic representation {\it cuspidal}, if it is isomorphic to a subquotient of $\calA_{cusp,\J}^\infty([G])$ (for some ideal $\J$ of $\calZ(\g)$ of finite codimension). Now, let $L_{cusp}^2([G])$ be the Hilbert space of classes of left-$G(F)A^\R_G$-invariant cuspidal functions $f: G(\A)\ra\C$, which are square-integrable modulo $G(F)A^\R_G$, endowed with the $L^2$-norm. As it is well-known (following from \ \cite{GGPS}, \S 7.2), $L_{cusp}^2([G])$ is a unitary representation of $G(\A)$ by right translation, which decomposes as a countable direct Hilbert sum of irreducible subrepresentations $\mathcal H_i$,
\begin{equation}\label{eq:l2}
L_{cusp}^2([G]) =\widehat\bigoplus_{i\in I} \ \mathcal H_i,
\end{equation}
each of which appearing with finite multiplicity $m(\mathcal H_i):=\dim \Hom_{G(\A)}(\mathcal H_i, L_{cusp}^2([G]))$ (which may grow unboundedly, though, as $\mathcal H_i$ varies through $i\in I$, cf.\ \cite{ggj}, (1.1)). It is finally a consequence of \cite{borel_localement}, Prop.\ 5.26, that hence there exists a (unique) subset $I(\J)\subseteq I$ such that we have the following decomposition of $ \calA_{cusp,\J}([G]):=\calA_{cusp,\J}^\infty([G])_{(K_\infty)} $ into a countable algebraic direct sum of irreducible $ (\g_\infty,K_\infty,G(\A_f)) $-modules:
\begin{equation}\label{eq:117}
\calA_{cusp,\J}([G])\cong \bigoplus_{i\in I(\J)} \ \mathcal H^{\infty_\A}_{i,(K_\infty)}.
\end{equation}
Here we identify each element of the right-hand side, which is by definition an equivalence class of almost everywhere equal measurable functions on $ G(F)A_G^\R\backslash G(\A) $, with its unique continuous representative. \\\\
On the level of cuspidal smooth-automorphic forms, it easily follows from \cite{borel_co},  Eq.\ 6.8.4 in combination with our Prop.\ \ref{prop:dict_umg}, that we have an identification of vector spaces 
$$L^2_{cusp,\J}([G])^{\infty_\A}_{(\mathcal Z(\g))} \cong \calA_{cusp,\J}^\infty([G]),$$
of the space of smooth, $\mathcal Z(\g)$-finite vectors\footnote{Unlike stated in the literature, 
the additional assumption of $\mathcal Z(\g)$-finiteness is essential as $L^2_{cusp,\J}([G])^{\infty_\A} \neq \calA_{cusp,\J}^\infty([G])$.} 
in $L_{cusp,\J}^2([G]):=\widehat\bigoplus_{i\in I(\J)} \ \mathcal H_i$ and the space of cuspidal smooth-automorphic forms, given by assigning each class in $L^2_{cusp,\J}([G])^{\infty_\A}_{(\mathcal Z(\g))}$ its unique continuous representative. \\\\
It is by no means clear, however, that this identification is compatible with the direct sum decomposition of $L_{cusp,\J}^2([G])$, i.e., it is not clear that the LF-spaces of globally smooth vectors in the irreducible subrepresentations $\mathcal H_i$, $i\in I(\J)$, of the Hilbert space representation $L^2_{cusp}([G])$ identify with irreducible subrepresentations of the LF-space $\calA_{cusp,\J}^\infty([G])$. It is the goal of this section to establish the following

\begin{thm}\label{thm:cusp}
The isomorphism \eqref{eq:117} extends to a $G(\A)$-equivariant decomposition into a countable locally convex direct sum of LF-compatible smooth-automorphic subrepresentations:
	\[ \calA^\infty_{cusp,\J}([G])\cong\bigtoplus_{i\in I(\J)}\calH_i^{\infty_\A}. \]
Consequently, for each $i\in I(\J)$, $\mathcal H^{\infty_\A}_i=\lim_n (\mathcal H^{\infty_\R}_i)^{K_n}$ with its natural LF-space topology embeds as a $G(\A)$-subrepresentation into $\calA_{cusp,\J}^\infty([G])$. This characterises the irreducible cuspidal smooth-automorphic subrepresentations of $\calA_\J^\infty(G)$ as the subrepresentations isomorphic to the LF-spaces $\mathcal H^{\infty_\A}_i$ of smooth vectors in the unitary Hilbert space representations $\mathcal H_i$, $i\in I(\J)$.
\end{thm}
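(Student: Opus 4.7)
My strategy is to apply Proposition \ref{prop:108} to the LF-compatible smooth-automorphic subrepresentation $V := \calA^\infty_{cusp,\J}([G])$, feeding it the algebraic decomposition \eqref{eq:117} of its underlying $(\g_\infty,K_\infty,G(\A_f))$-module. That $V$ is LF-compatible (and hence an LF-space) is precisely Cor.\ \ref{cor:111}, and the splitting $V_{(K_\infty)} = \calA_{cusp,\J}([G]) = \bigoplus_{i\in I(\J)}V_{0,i}$ into irreducible $(\g_\infty,K_\infty,G(\A_f))$-submodules $V_{0,i} := \calH^{\infty_\A}_{i,(K_\infty)}$ is exactly \eqref{eq:117}.

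Once these hypotheses are in place, Prop.\ \ref{prop:108} will yield the desired LF-compatible locally convex direct sum decomposition
\[ \calA^\infty_{cusp,\J}([G]) \,=\, \bigtoplus_{i\in I(\J)} V_i,\qquad V_i := \Cl_{\calA^\infty_\J(G)}(V_{0,i}), \]
into irreducible smooth-automorphic subrepresentations $V_i$ satisfying $(V_i)_{(K_\infty)} = V_{0,i}$ for every $i\in I(\J)$.

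The heart of the argument will then be the identification $V_i \cong \calH_i^{\infty_\A}$ as $G(\A)$-representations. On one side, Prop.\ \ref{prop:smmf} shows that $V_i$ is a Casselman--Wallach representation of $G(\A)$ (as an irreducible smooth-automorphic subrepresentation, it is annihilated by a power of $\J$). On the other side, $\calH_i$ is an irreducible unitary $G(\A)$-representation, so its space of smooth vectors $\calH_i^{\infty_\A}=\lim_n(\calH_i^{\infty_\R})^{K_n}$ is a smooth admissible $G(\A)$-representation whose underlying $(\g_\infty,K_\infty,G(\A_f))$-module is $V_{0,i}$; since $\calZ(\g)$ acts by a character on the irreducible $\calH_i$, each Fr\'echet $G_\infty$-representation $(\calH_i^{\infty_\A})^{K_n}$ is admissible, $\calZ(\g)$-finite and of moderate growth, hence Casselman--Wallach, so $\calH_i^{\infty_\A}$ is a Casselman--Wallach representation of $G(\A)$ as well. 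Both $V_i$ and $\calH_i^{\infty_\A}$ being Casselman--Wallach completions of the common underlying module $V_{0,i}$, Lem.\ \ref{lem:cass_wall_reps}.\eqref{lem:cass_wall_reps:2} will supply a $G(\A)$-equivariant topological isomorphism $V_i \overset{\sim}{\longrightarrow} \calH_i^{\infty_\A}$ extending the identity on $V_{0,i}$. I expect the main technical obstacle to sit here: carefully establishing the Casselman--Wallach property of $\calH_i^{\infty_\A}$ from its unitary origin, and verifying that the isomorphism produced by Lem.\ \ref{lem:cass_wall_reps} intertwines the LF-topology of $\lim_n(\calH_i^{\infty_\R})^{K_n}$ with the subspace topology the summand $V_i$ inherits from $\calA^\infty_{cusp,\J}([G])$.

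The characterisation of the irreducible cuspidal smooth-automorphic subrepresentations of $\calA_\J^\infty(G)$ will then be immediate: any such $V$ lies in $\calA_{cusp,\J}^\infty([G])$ and, by Thm.\ \ref{prop:subquots}, has an irreducible underlying $(\g_\infty,K_\infty,G(\A_f))$-module $V_{(K_\infty)}$, which by \eqref{eq:117} is isomorphic to $V_{0,i}$ for some $i\in I(\J)$. A further application of Lem.\ \ref{lem:cass_wall_reps}.\eqref{lem:cass_wall_reps:2} to the two Casselman--Wallach completions $V$ and $\calH_i^{\infty_\A}$ will then give $V \cong \calH_i^{\infty_\A}$ as $G(\A)$-representations.
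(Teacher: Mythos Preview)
Your approach is essentially the paper's: apply Prop.~\ref{prop:108} to the LF-compatible subrepresentation $\calA^\infty_{cusp,\J}([G])$ (Cor.~\ref{cor:111}) with the decomposition \eqref{eq:117}, observe that each closure $V_i$ is Casselman--Wallach via Prop.~\ref{prop:smmf}, that each $\calH_i^{\infty_\A}$ is Casselman--Wallach from its unitary origin, and invoke Lem.~\ref{lem:cass_wall_reps}.\eqref{lem:cass_wall_reps:2} to match them.

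One step you do not mention, which the paper carries out explicitly: the isomorphism $\bar\theta_i:\calH_i^{\infty_\A}\to V_i$ produced by Lem.~\ref{lem:cass_wall_reps}.\eqref{lem:cass_wall_reps:2} is abstract. It extends the map $\theta_i$ (class $\mapsto$ continuous representative) on $K_\infty$-finite vectors, but it is not automatic that $\bar\theta_i([f])$ is the continuous representative of $[f]$ for arbitrary $[f]\in\calH_i^{\infty_\A}$; this is what makes the embedding $\calH_i^{\infty_\A}\hookrightarrow\calA^\infty_{cusp,\J}([G])$ the natural one rather than merely some $G(\A)$-equivariant topological embedding. The paper closes this by a short pointwise argument: approximate $[f]$ by $K_\infty$-finite $[f_m]$ in the Fr\'echet space $(\calH_i^{\infty_\R})^{K_n}$, pass to a subsequence converging a.e.\ (via $L^2$), while $\bar\theta_i([f_m])=\theta_i([f_m])\to\bar\theta_i([f])$ in $\calA^\infty_\J([G])$ and hence pointwise; conclude $\bar\theta_i([f])=f$ a.e., so everywhere by continuity. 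Your stated concern about ``intertwining the LF-topology with the subspace topology'' is in fact automatic once Lem.~\ref{lem:cass_wall_reps}.\eqref{lem:cass_wall_reps:2} hands you a bicontinuous isomorphism onto the LF-compatible summand $V_i$; the genuine residual point is identifying the abstract extension with the concrete continuous-representative map.
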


\begin{proof}
For each $i\in I$, let us write $\theta_i$ for the linear map assigning each class $[f]\in\calH_{i}^{\infty_\A}$ its unique continuous representative and let us abbreviate $H_{i,(K_\infty)}^{\infty_\A}:=\theta_i(\calH_{i,(K_\infty)}^{\infty_\A})$. Then, combining Prop.\ \ref{prop:014}, Cor.\ \ref{cor:111} and \eqref{eq:117} with Prop.\ \ref{prop:108} implies that we have the following decomposition of the $G(\A)$-representation $\calA^\infty_{cusp,\J}([G])$ into a locally convex direct sum of irreducible LF-compatible smooth-automorphic subrepresentations:
	\[ \calA^\infty_{cusp,\J}([G])=\bigtoplus_{i\in I(\J)}\Cl_{\calA_\J^\infty([G])}(H_{i,(K_\infty)}^{\infty_\A}). \]
We now prove that for every $ i\in I(\J) $ the map $\theta_i$ extends to an isomorphism $\calH_i^{\infty_\A} \cong \Cl_{\calA_\J^\infty([G])}(H_{i,(K_\infty)}^{\infty_\A})$ of $G(\A)$-representations. As we have just observed, for every $ i\in I(\J)$, $ \Cl_{\calA_\J^\infty([G])}(H_{i,(K_\infty)}^{\infty_\A}) $ is an irreducible smooth-automorphic subrepresentation. Hence, by Prop.\ \ref{prop:smmf} each $ \Cl_{\calA_\J^\infty([G])}(H_{i,(K_\infty)}^{\infty_\A}) $ is a Casselman-Wallach representation of $ G(\A) $, which has $H_{i,(K_\infty)}^{\infty_\A} $ as its $ (\g_\infty,K_\infty,G(\A_f)) $-module of $ K_\infty $-finite vectors, cf.\ Thm.\ \ref{thm:005} and Prop.\ \ref{prop:014}. Similarly, by \cite{wallachII}, Lemmas 11.5.1 and 11.5.2 (and the well-known fact that an irreducible unitary representation of $G(\A)$ is admissible, cf.\ \cite{flath}, Thm.\ 4.(2)) $ \calH_i^{\infty_\A}=\lim_{n}\left(\calH_i^{\infty_\R}\right)^{K_n} $ is a Casselman-Wallach representation of $ G(\A)$. Obviously, $\theta_i$ is an isomorphism between the $ (\g_\infty,K_\infty,G(\A_f)) $-modules $\calH_{i,(K_\infty)}^{\infty_\A}$ and $H_{i,(K_\infty)}^{\infty_\A}$. Thus, as argued in the proof of Lem.\ \ref{lem:cass_wall_reps}.\eqref{lem:cass_wall_reps:2} $\theta_i$ must extend to an equivalence of $ G(\A) $-representations (and hence, in particular, a bi-continuous map)
	\[ \bar\theta_i:\calH_i^{\infty_\A}\ira\Cl_{\calA_\J^\infty([G])}(H_{i,(K_\infty)}^{\infty_\A}). \]
	In order to complete the proof, it remains to show that $\bar\theta_i([f])=\theta_i([f])$ for every $ [f]\in\calH_i^{\infty_\A} $. If $ [f]\in\calH_i^{\infty_\A} $, then $ [f]\in\left(\calH_i^{\infty_\R}\right)^{K_n} $ for some $ n\in\N $, hence by \cite[Lem.\ 4]{hch66} there exists a sequence $ ([f_m])_{m\in\N}$, $f_m\in\left(\calH_i^{\infty_\R}\right)^{K_n}_{(K_\infty)} $ converging to $ [f] $ in $ \left(\calH_i^{\infty_\R}\right)^{K_n} $, hence in $ \calH_i^{\infty_\A} $, hence in $ \calH_i\subseteq L^2([G]) $ and hence, replacing the original sequence $ ([f_m])_{m\in\N} $ by a suitable subsequence and choosing (any) representatives for our classes in sight, the sequence $(f_m)_{m\in\N}$ converges almost everywhere on $ G(\A)$ to $f$. On the other hand, by the continuity of $\bar\theta_i $, $ \theta_i([f_m])=\bar\theta_i([f_m])\to \bar\theta_i([f]) $ in $ \calA_\J^\infty([G]) $, hence also pointwise everywhere on $ G(\A) $, cf.\ \S \ref{sect:LF-autom}. It follows that $ \bar\theta_i([f])=f$ almost everywhere on $ G(\A) $, which, together with the continuity of $ \bar\theta_i([f])$, cf.\ Rem.\ \ref{rem:phicont}, implies that $\bar\theta_i([f])=\theta_i([f])$, which proves the claim.
\end{proof}

\begin{rem}
Thm.\ \ref{thm:cusp} has the following consequence: Every cuspidal smooth-automorphic form $\varphi$ is a {\it finite} sum of smooth cuspidal functions $\varphi_i\in \mathcal H^{\infty_\A}_i$. For $K_\infty$-finite cuspidal automorphic forms this is well-known and an immediate consequence of \eqref{eq:117}, whereas it is in general wrong for elements of the space $L_{cusp,\J}^2([G])$. In view of the above mentioned, natural inclusions 
$$L_{cusp,\J}^2([G])\supsetneq \calA^\infty_{cusp,\J}([G])\supsetneq \calA_{cusp,\J}([G]) $$ 
this finiteness-statement hence amounts to the dictum that cuspidal smooth-automorphic forms $\varphi$ are more similar to $K_\infty$-finite cuspidal automorphic forms than to square-integrable cuspidal functions. Remarkably, this is so, though the quotient $\calA_\J^\infty([G])/\calA_\J([G])$ has uncountable dimension. 
\end{rem}

\subsection{The cuspidal support of a smooth-automorphic form}
We will now give a definition of the cuspidal support of a smooth-automorphic form. Our notion of cuspidal support -- which will be intrinsic to the smooth-automorphic setting -- will extend the usual definition for classical automorphic forms to the framework of smooth-automorphic forms. Thm.\ \ref{thm:cusp} will be a crucial ingredient in what follows. \\\\
Let $ \left\{P\right\} $ denote an associate class of parabolic $ F $-subgroups of $G$, represented by $P=L_PN_P\in\calP$. Given an ideal $\J$ of $\calZ(\g)$ of finite codimension, an {\it associate class} of cuspidal smooth-automorphic representations is represented by a pair $([\tilde\pi],\Lambda)$, where 
\begin{enumerate}
\item $[\tilde\pi]$ is an equivalence class of an irreducible cuspidal smooth-automorphic subrepresentation $\tilde\pi$ of $L_P(\A)$ and
\item $\Lambda:A^\R_P\rightarrow\C^*$ is a Lie group character, which is trivial on $A^\R_G$,
\end{enumerate}
such that the following compatibility-hypothesis is satisfied: Let $\lambda_0:=d\Lambda\in\check\a^G_{P,\C}$ be the derivative of $\Lambda$ and consider the irreducible smooth-automorphic subrepresentation $\pi:=e^{\langle\lambda_0, H_P(\cdot)\rangle} \cdot \tilde\pi$ of $L_P(\A)$. We suppose that the Weyl group orbit of the infinitesimal character of $\pi_\infty:=\bigcprojtp_{\vv \in S_\infty} \pi_\vv $ (cf.\ Thm.\ \ref{thm:TPthm}) annihilates $\J$. Here, $\J$ is viewed by means of the Harish-Chandra isomorphism as an ideal of the algebra $S(\check\a_{0,\C})^{W_G}$ of $W_G$-fixed elements in the symmetric algebra $S(\check\a_{0,\C})$ of $\check\a_{0,\C}$, cf.\ \S\ref{sect:liegrps}.\\\\ In fact, given a pair $([\tilde\pi],\Lambda)$ as above, the actual associate class of smooth-automorphic subrepresentations, represented by $([\tilde\pi],\Lambda)$ (or, equivalently, by $[\pi]$), is given by the collection $\varphi([\pi])=\{\varphi_Q([\pi])\}_{Q\in\{P\}}$ of finite sets of equivalence classes $\varphi_Q([\pi]):=\{[w\cdot\pi] \ | \ w\in W(L_P) \ \textrm{such that} \ wL_Pw^{-1} = L_Q \}$ of smooth-automorphic subrepresentations $w\cdot\pi$ of $L_Q(\A)$, where as usual $(w\cdot \pi)(\ell):=\pi(w^{-1}\ell w)$ for $\ell\in L_Q(\A)$.\\\\ Given $\J$, we denote by $\Phi_{\J,\left\{P\right\}}$ the set of all associate classes $\varphi([\pi])$, represented by a pair $([\tilde\pi],\Lambda)$ as above. \\\\ 
We point out that our notion of associate classes of cuspidal smooth-automorphic subrepresentations extends the usual notion of associate classes (cf.\ \cite{franke_schwermer}, \S 1.2) into the context of smooth-automorphic forms, i.e., the collections $\varphi([\pi_{(K_\infty)}])$ of finite sets of equivalence classes of $ (\g_\infty,K_\infty,G(\A_f)) $-modules of $K_\infty$-finite vectors in $\pi$ and its $W(L_P)$-conjugates coincide with the associate classes of cuspidal automorphic representations as defined in \cite{franke_schwermer}, \S 1.2 (for ideals $\J$ stemming from coefficients in automorphic cohomology; however, see also \cite{franke_schwermer}, Rem.\ 3.4): The verification of this claim relies crucially on the characterization of the irreducible cuspidal smooth-automorphic subrepresentations as the spaces of globally smooth vectors $\calH^{\infty_\A}_i$, $i\in I$, provided by Thm.\ \ref{thm:cusp} as applied to the Levi-subgroups $L_Q(\A)$, and the following lemma:

\begin{lem}\label{lem:egal}
For any two irreducible direct summands $\calH$ and $\calH'$ in the decomposition $L^2_{cusp}([L])=\widehat\bigoplus_{i\in I} \ \mathcal H_i,$ the following assertions are equivalent: 
\begin{enumerate}
\item $\calH\cong\calH'$
\item $\calH^{\infty_\A}\cong\calH'^{\infty_\A}$
\item $\calH^{\infty_\A}_{(K_\infty)}\cong\calH'^{\infty_\A}_{(K_\infty)}$
\end{enumerate}
\end{lem}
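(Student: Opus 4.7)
The plan is to prove the cyclic implications $(1) \Rightarrow (2) \Rightarrow (3) \Rightarrow (1)$. The first two implications are purely functorial: any $G(\A)$-equivariant continuous isomorphism $T \colon \calH \ira \calH'$ restricts to an isomorphism of the (canonically defined) subspaces of smooth vectors $\calH^{\infty_\A} \ira \calH'^{\infty_\A}$, equipped with their LF-space topologies from \S\ref{sect:G(A)reps}, and this further restricts to an isomorphism of the underlying $(\g_\infty,K_\infty,G(\A_f))$-modules of $K_\infty$-finite vectors.

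The substantive implication is $(3) \Rightarrow (1)$. Given an isomorphism $T_0 \colon \calH^{\infty_\A}_{(K_\infty)} \ira \calH'^{\infty_\A}_{(K_\infty)}$ of $(\g_\infty,K_\infty,G(\A_f))$-modules, Thm.\ \ref{thm:cusp} ensures that both sides are irreducible. Each carries a $G(\A)$-invariant positive-definite Hermitian form obtained by restricting the $L^2$-inner product from the ambient Hilbert space, and pulling back the form on $\calH'^{\infty_\A}_{(K_\infty)}$ via $T_0$ yields a second such invariant form on $\calH^{\infty_\A}_{(K_\infty)}$. By the Schur-type uniqueness of invariant Hermitian forms on an irreducible admissible $(\g_\infty,K_\infty,G(\A_f))$-module (discussed below), these two forms must differ by a positive real scalar $c$. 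Rescaling $T_0$ by $c^{1/2}$ therefore makes it an isometry on the dense subspaces of $K_\infty$-finite vectors (density by \cite{hch66}, Lem.\ 4), and extending by continuity produces a unitary---in particular, $G(\A)$-equivariant---isomorphism $\calH \ira \calH'$, establishing $(1)$.

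The principal obstacle is the Schur-type uniqueness invoked above. The plan for it runs as follows: any $G(\A)$-invariant sesquilinear form on $\calH^{\infty_\A}_{(K_\infty)}$ corresponds to a $(\g_\infty,K_\infty,G(\A_f))$-equivariant linear map from the module into its conjugate-linear algebraic dual. Because $\calH^{\infty_\A}_{(K_\infty)} = \bigcup_n (\calH^{\infty_\A}_{(K_\infty)})^{K_n}$ is of at most countable dimension---each step being an admissible, hence countable-dimensional, $(\g_\infty,K_\infty)$-module, with $K_\infty$ having only countably many irreducible representations---the classical form of Schur's lemma applies and constrains the space of such equivariant maps between isomorphic irreducibles to be one-dimensional. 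Positivity of both Hermitian forms then forces the proportionality constant to be positive real. An alternative route, which bypasses direct invocation of Schur's lemma in the adelic setting, is to factor the module through the restricted tensor product decomposition of Thm.\ \ref{thm:TPthm} and to combine Wallach's uniqueness of unitary structure on irreducible Harish-Chandra modules at archimedean places with the elementary analogue at the non-archimedean ones.
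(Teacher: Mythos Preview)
Your cyclic argument is correct, and your primary route for $(3)\Rightarrow(1)$ via a global Schur-type uniqueness of invariant Hermitian forms differs from the paper's. The paper instead establishes $(2)\Leftrightarrow(3)$ directly from Lem.\ \ref{lem:cass_wall_reps}.\eqref{lem:cass_wall_reps:2} (the global Casselman--Wallach equivalence, using that $\calH^{\infty_\A}$ and $\calH'^{\infty_\A}$ are Casselman--Wallach representations of $G(\A)$ by Thm.\ \ref{thm:cusp} and Prop.\ \ref{prop:smmf}), and then deduces $(3)\Rightarrow(1)$ by passing through Flath's restricted tensor product theorem and invoking the local uniqueness of unitary structures (\cite{wallachI}, Thm.\ 3.4.11, at the archimedean places and \cite{cartier}, Thm.\ 2.8, at the finite ones). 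This is precisely the ``alternative route'' you mention at the end. Your global approach is pleasantly self-contained---it avoids the tensor product machinery and rests only on the countable-dimension form of Schur's lemma (Dixmier)---while the paper's route has the advantage of reducing everything to well-documented local statements.

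One step in your argument deserves an extra sentence of justification. After rescaling $T_0$ to an isometry and extending by continuity to a unitary $T:\calH\to\calH'$, the equivariance for $K_\infty$ and for $G(\A_f)$ indeed follows by density; but ``unitary---in particular, $G(\A)$-equivariant'' is too quick for the $G_\infty$-part, since $G_\infty$ does not preserve the subspace of $K_\infty$-finite vectors on which $T_0$ lives. The clean fix is to transport $\pi'$ back along $T$ to a second unitary $G_\infty$-representation on $\calH$ and note that it shares with $\pi$ the same dense $(\g_\infty,K_\infty)$-module of $K_\infty$-finite vectors; since such vectors are analytic for both representations (Harish-Chandra), the two actions of $G_\infty^\circ$ agree on them by Taylor expansion, and agreement on $K_\infty$ then propagates to all of $G_\infty$. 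With this sentence added, your proof is complete.
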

\begin{proof}
Recalling from Thm.\ \ref{thm:cusp} and Prop.\ \ref{prop:smmf} that $\calH^{\infty_\A}$ and $\calH'^{\infty_\A}$ are Casselman-Wallach representations of $G(\A)$, the equivalence of (2) and (3) is the assertion of Lem.\ \ref{lem:cass_wall_reps}.\eqref{lem:cass_wall_reps:2}. That (3) implies (1) follows from  well-known results on irreducible unitary representations of local groups $G(F_{\sf v})$, proved by Harish-Chandra, Bernstein and Godement, or, more precisely, from a combination of \cite{flath}, Thm.\ 3 \& 4, \cite{wallachI}, Thm.\ 3.4.11 and \cite{cartier}, Thm.\ 2.8 \& \S 2.8 {\it ibidem}.
\end{proof}

\noindent Having recalled this dictionary of definitions, let $\varphi([\pi])\in\Phi_{\J,\left\{P\right\}}$ and let $I^G_{P}(\widetilde\pi)$ be the space of all smooth, left $L(F)N(\A)A^\R_P$-invariant functions $f: G(\A)\ra\C$, such that for every $g\in G(\A)$ the function $l\mapsto f(lg)$ on $L(\A)$ is contained in 
$$\bigtoplus_{\substack{i\in I\\ \calH^{\infty_\A}_i\cong\tilde\pi}} \calH^{\infty_\A}_i\cong \tilde\pi^{m(\tilde\pi)},$$ 
where $m(\tilde\pi)$ denotes the finite multiplicity of $\tilde\pi$ in $\calA^\infty_{cusp,\J_L}([L])$ (in one -- and hence any -- $\calA^\infty_{cusp,\J_L}([L])$, into which $\tilde\pi$ embeds). By Lem.\ \ref{lem:egal}, this use of notation is consistent with our previous one, i.e.,
$$m(\tilde\pi)=\dim \Hom_{G(\A)}(\tilde\pi, \calA^\infty_{cusp,\J_L}([L]))=\dim \Hom_{G(\A)}( \calH_i, L_{cusp}^2([G])),$$
for each $\calH_i$, $i\in I$, such that $\calH^{\infty_\A}_i\cong\tilde\pi$. For a function $f\in I^G_{P}(\widetilde\pi)$, $\lambda\in\check\a^G_{P,\C}$ and $g\in G(\A)$ an {\it Eisenstein series} may be formally defined as

$$E_{P}(f,\lambda)(g):=\sum_{\gamma\in P(F)\backslash G(F)}
f(\gamma g)e^{\<\lambda+\rho_P,H_{P}(\gamma g)\>}.$$
If $f$ is $K_\infty$-finite, the so-defined Eisenstein series is known to converge absolutely and uniformly on compact subsets of $G(\A)\times \{\lambda\in\check\a^G_{P,\C}| \Re e(\lambda)\in\rho_P+\check\a^{G+}_P\}$. In this case, $E_{P}(f,\lambda)$ is an automorphic form. The map
$\lambda\mapsto E_{P}(f,\lambda)(g)$ can be analytically continued to a meromorphic function on all of $\check\a^G_{P,\C}$, cf. \cite{moewal}, Thm.\ IV.1.8 or \cite{langlandsLNM}, \S7. Its singularities (i.e., poles) lie along certain affine hyperplanes of the form $R_{\alpha, t}:=\{\xi\in\check\a^G_{P,\C}| (\xi,\alpha)=t\}$ for some constant $t$ and some root $\alpha\in\Delta(P,A_P)$, called ``root-hyperplanes'' (cf.\ \cite{moewal}, Prop. IV.1.11 (a) or \cite{langlandsLNM}, pp.\ 170--171). This entails the assertion that for each $\xi_0\in \check\a^G_{P,\C}$, there is a finite subset $\Delta_{\xi_0}\subseteq\Delta(P,A_P) $, such that the function 
$$q_{\xi_0}(\lambda):=\prod_{\alpha\in\Delta_{\xi_0}}\langle \lambda-{\xi_0},\check\alpha\rangle,$$
which is a non-zero, holomorphic function in $\lambda\in \check\a^G_{P,\C}$, has the property to make the assignment $ \check\a^G_{P,\C}\ra\C$ $\lambda \mapsto q_{\xi_0}(\lambda) E_P(f,\lambda)(g)$ holomorphic in a small neighbourhood of ${\xi_0}$ for all $K_\infty$-finite $f\in I_P^G(\tilde\pi)$ and $g\in G(\A)$.\\\\
Now, let ${S}(\check\a^G_{P,\C})$ be the symmetric algebra of $\check\a^G_{P,\C}$, viewed as the space of differential operators $\partial$ with constant coefficients on $\check\a^G_{P,\C} $, cf.\ \S\ref{sect:liegrps}. Then, at $\lambda_0=d\Lambda$ as above, the formal assignment defined by
$${\rm Eis_{\tilde\pi,\lambda_0}}(f\otimes \partial):=\partial(q_{\lambda_0}(\lambda) E_P(f,\lambda))|_{\lambda=\lambda_0}$$
turns out to be a well-defined map on the $K_\infty$-finite elements $f\in I^G_P(\tilde\pi)$ and $\partial\in{S}(\check\a^G_{P,\C})$ and we set for each $\varphi=\varphi([\pi])\in\Phi_{\J,\left\{P\right\}}$, 
$$\mathcal A_{\mathcal J,\{P\},\varphi}([G]):=Im({\rm Eis_{\tilde\pi,\lambda_0}}).$$
Its definition is independent of the choice of the representatives $P$ and $([\tilde\pi],\Lambda)$, thanks to the functional equations satisfied by the Eisenstein series considered, cf.\ \cite{moewal}, Thm.\ IV.1.10. Moreover, $\mathcal A_{\mathcal J,\{P\},\varphi}([G])$ is a $ (\g_\infty,K_\infty,G(\A_f)) $-submodule of $\mathcal A_{\mathcal J,\{P\}}([G])$, cf.\ \cite{franke_schwermer}, p.\ 771 and p.\ 773. Hence, combining Lem.\ \ref{lem:008}, Prop.\ \ref{prop:014} and Thm.\ \ref{thm:112}, 
$$\mathcal A^\infty_{\mathcal J,\{P\},\varphi}([G]):=\Cl_{\calA^\infty_{\J}([G])}(\mathcal A_{\mathcal J,\{P\},\varphi}([G]))$$
is a smooth-automorphic subrepresentation, lying inside the LF-compatible subrepresentation $\mathcal A^\infty_{\mathcal J,\{P\}}([G])$. The following is our last main result:

\begin{thm}\label{thm:cuspsup}
	For every $ P\in\calP $, we have the following $G(\A)$-equivariant decomposition into a locally convex direct sum of LF-compatible smooth-automorphic subrepresentations:
	\[ \calA^\infty_{\J,\left\{P\right\}}([G])=\bigtoplus_{\varphi\in\Phi_{\J,\left\{P\right\}}}\mathcal A^\infty_{\mathcal J,\{P\},\varphi}([G]). \]
\end{thm}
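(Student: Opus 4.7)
The plan is to reduce the theorem to a direct application of Proposition \ref{prop:108}, having as the underlying algebraic input the original cuspidal support decomposition of Franke--Schwermer. Concretely, set $V := \calA^\infty_{\J,\left\{P\right\}}([G])$. By Thm.\ \ref{thm:112} this is an LF-compatible smooth-automorphic subrepresentation of $\calA^\infty_\J([G])$, and by the very definition used in the proof of that theorem the underlying $(\g_\infty,K_\infty,G(\A_f))$-module satisfies
\[
V_{(K_\infty)} = \calA_{\J,\left\{P\right\}}([G]).
\]
This puts us exactly in the hypothesis of Prop.\ \ref{prop:108}.

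Next, I would invoke the classical Franke--Schwermer decomposition \cite{franke_schwermer}, Thm.\ 1.4.(2) (see also Rem.\ 3.4 \emph{ibidem}), which provides the algebraic direct sum decomposition
\[
\calA_{\J,\left\{P\right\}}([G]) = \bigoplus_{\varphi \in \Phi_{\J,\left\{P\right\}}} \mathcal A_{\J,\left\{P\right\},\varphi}([G])
\]
as $(\g_\infty,K_\infty,G(\A_f))$-modules. Here one has to check that our indexing set $\Phi_{\J,\left\{P\right\}}$ (intrinsic to the smooth-automorphic setting) agrees with the classical one used in \emph{loc.\ cit.}: this is precisely the content of the dictionary recalled in the paragraph after the definition of $\Phi_{\J,\left\{P\right\}}$, which rests on Thm.\ \ref{thm:cusp} applied to each Levi subgroup $L_Q(\A)$ for $Q \in \left\{P\right\}$ together with Lem.\ \ref{lem:egal}. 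The submodules $\mathcal A_{\J,\left\{P\right\},\varphi}([G])$ are $(\g_\infty,K_\infty,G(\A_f))$-stable by the discussion preceding the theorem, hence the hypothesis of Prop.\ \ref{prop:108} on $V_0 = V_{(K_\infty)}$ is satisfied verbatim.

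Applying Prop.\ \ref{prop:108} with $V_{0,\varphi} := \mathcal A_{\J,\left\{P\right\},\varphi}([G])$ then yields the locally convex direct sum decomposition into LF-compatible smooth-automorphic subrepresentations
\[
V = \bigtoplus_{\varphi \in \Phi_{\J,\left\{P\right\}}} \Cl_{\calA^\infty_\J([G])}\!\left(\mathcal A_{\J,\left\{P\right\},\varphi}([G])\right) = \bigtoplus_{\varphi \in \Phi_{\J,\left\{P\right\}}} \mathcal A^\infty_{\J,\left\{P\right\},\varphi}([G]),
\]
where the last equality is the very definition of $\mathcal A^\infty_{\J,\left\{P\right\},\varphi}([G])$. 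This is the claimed $G(\A)$-equivariant decomposition.

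The substantive points of the argument are absorbed into earlier statements: the algebraic decomposition is the Franke--Schwermer theorem, and the passage from an algebraic direct sum of $(\g_\infty,K_\infty,G(\A_f))$-modules to a bicontinuous locally convex direct sum of smooth $G(\A)$-representations is handled uniformly by Prop.\ \ref{prop:108}. I expect the only genuinely delicate point to be the bookkeeping that matches our $\Phi_{\J,\left\{P\right\}}$ with the indexing set in \cite{franke_schwermer}, since that set was phrased there in terms of classical cuspidal automorphic representations rather than their smooth-automorphic counterparts; this identification, however, is exactly what Thm.\ \ref{thm:cusp} and Lem.\ \ref{lem:egal} were designed to provide.
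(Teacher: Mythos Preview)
Your proposal is correct and follows essentially the same approach as the paper: invoke the Franke--Schwermer decomposition of $\calA_{\J,\{P\}}([G])$ (\cite{franke_schwermer}, Thm.\ 1.4 and Rem.\ 3.4) and then apply Prop.\ \ref{prop:108} to the LF-compatible subrepresentation $\calA^\infty_{\J,\{P\}}([G])$ furnished by Thm.\ \ref{thm:112}. Your version is simply more explicit about the identification of the indexing set $\Phi_{\J,\{P\}}$ with the classical one, which the paper handles in the discussion preceding the theorem.
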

\begin{proof}
After our preparatory work above, we only need to observe that for every $ P\in\calP $, we have a decomposition 
\[ \calA_{\J,\left\{P\right\}}([G])=\bigoplus_{\varphi\in\Phi_{\left\{P\right\}}}\calA_{\J,\left\{P\right\},\varphi}([G]) \]
into a direct sum of $ (\g_\infty,K_\infty,G(\A_f)) $-submodules \cite{franke_schwermer}, Thm.\ 1.4 and Rem.\ 3.4, {\it loc.\ cit.} Thus, our Prop.\ \ref{prop:108} implies the desired result.
\end{proof}

\begin{rem}
As implied by Thm.\ \ref{thm:cuspsup}, taking the topological closure in $\calA^\infty_{\J}([G])$ of the image of what is given by forming all possible derivatives of residues of Eisenstein series, attached to the subspace of $K_\infty$-finite elements in the various induced representations $I^G_P(\tilde\pi)$, yields all smooth-automorphic forms. It will be interesting to know, whether this process can be inverted in the sense that the steps of taking topological closures and taking $K_\infty$-finite vectors can be interchanged, i.e., whether one may continuously extend the map ${\rm Eis_{\tilde\pi,\lambda_0}}$ to all of $I^G_P(\tilde\pi)$ and still obtain $\mathcal A^\infty_{\mathcal J,\{P\},\varphi}([G])$ as its image. Yet, to this end, one would need to specify a meaningful topology on $I^G_P(\tilde\pi)$ that reveals the space of $K_\infty$-finite vectors inside as a dense subspace. Interesting results in this direction are contained in \cite{erez_book}. However, in this reference a very much different topological approach, which is more suited to an ad-hoc analysis of induced representations rather than to the full spaces $\mathcal A^\infty_{\mathcal J,\{P\},\varphi}([G])$, was taken. We hope to report on this question in the forthcoming ``part 2'' of the present paper, which we will devote to the analytic continuation of smooth-automorphic Eisenstein series, i.e., to a smooth-automorphic version of the results in \cite{moewal}, IV. 
\end{rem}

\bigskip

\end{document}